\def\todaysdate{3\textsuperscript{rd} January 2022}
\definecolor{lightblue}{rgb}{0.8,0.8,1}
\numberwithin{equation}{section}
\numberwithin{figure}{section}
\declaretheoremstyle[
  spaceabove=\topsep,
  spacebelow=\topsep,
  headpunct=,
  numbered=no,
  postheadspace=1ex,
  headfont=\normalfont\bfseries,
  bodyfont=\normalfont\itshape,
]{italic}
\declaretheoremstyle[
  spaceabove=\topsep,
  spacebelow=\topsep,
  headpunct=,
  numbered=no,
  postheadspace=1ex,
  headfont=\normalfont\bfseries,
  bodyfont=\normalfont\upshape,
]{upright}
\declaretheorem[style=italic,name=Theorem,numbered=yes,numberwithin=section]{thm}
\declaretheorem[style=italic,name=Lemma,numbered=yes,numberlike=thm]{lem}
\declaretheorem[style=italic,name=Proposition,numbered=yes,numberlike=thm]{prop}
\declaretheorem[style=italic,name=Corollary,numbered=yes,numberlike=thm]{coro}
\declaretheorem[style=upright,name=Definition,numbered=yes,numberlike=thm]{defn}
\declaretheorem[style=upright,name=Remark,numbered=yes,numberlike=thm]{rmk}
\declaretheorem[style=upright,name=Example,numbered=yes,numberlike=thm]{eg}
\declaretheorem[style=upright,name=Notation,numbered=yes,numberlike=thm]{notation}
\declaretheorem[style=upright,name=Terminology,numbered=yes,numberlike=thm]{terminology}
\declaretheorem[style=upright,name=Construction,numbered=yes,numberlike=thm]{construction}
\renewcommand*{\@seccntformat}[1]{\upshape\csname the#1\endcsname.\hspace{1ex}}
\renewcommand*{\part}{\@startsection{part}{0}{\z@}%
	{2.5ex \@plus 1ex \@minus 0.2ex}%
	{1.5ex \@plus 0.2ex}%
	{\normalfont\large\bfseries\centering}}
\renewcommand*{\section}{\@startsection{section}{1}{\z@}%
	{2.5ex \@plus 1ex \@minus 0.2ex}%
	{1.5ex \@plus 0.2ex}%
	{\normalfont\large\bfseries}}
\renewcommand*{\subsection}{\@startsection{subsection}{2}{\z@}%
	{2.5ex \@plus 1ex \@minus 0.2ex}%
	{-1.5ex \@plus -0.2ex}%
	{\normalfont\normalsize\bfseries}}
\renewcommand*{\subsubsection}{\@startsection{subsubsection}{3}{\z@}%
	{2.5ex \@plus 1ex \@minus 0.2ex}%
	{-1.5ex \@plus -0.2ex}%
	{\normalfont\normalsize\bfseries}}
\renewcommand*{\paragraph}{\@startsection{paragraph}{4}{\z@}%
	{2.5ex \@plus 1ex \@minus 0.2ex}%
	{-1.5ex \@plus -0.2ex}%
	{\normalfont\normalsize\bfseries}}
\renewcommand*{\subparagraph}{\@startsection{subparagraph}{5}{\z@}%
	{2.5ex \@plus 1ex \@minus 0.2ex}%
	{-1.5ex \@plus -0.2ex}%
	{\normalfont\normalsize\slshape}}
\newcommand{\cf}{\textit{cf}.\ }
\newcommand{\Diff}{\ensuremath{\mathrm{Diff}}}
\newcommand{\Map}{\ensuremath{\mathrm{Map}}}
\newcommand{\twoheadrightarrowlong}{\ensuremath{\longrightarrow\mathrel{\mkern-14mu}\rightarrow}}
\newcommand{\longhookrightarrow}{\ensuremath{\lhook\joinrel\longrightarrow}}
\newcommand{\too}{\ensuremath{\longrightarrow}}
\newcommand{\incl}[3][right]%
{%
\draw[<-,>=#1 hook] #2 to ($ #2!0.5!#3 $);
\draw[->] ($ #2!0.5!#3 $) to #3;%
}
\newenvironment{itemizeb}%
{\begin{compactitem}

}%
{\end{compactitem}}
\newcommand{\bR}{\mathbb{R}}
\newcommand{\bZ}{\mathbb{Z}}
\renewcommand{\geq}{\geqslant}
\renewcommand{\leq}{\leqslant}
\renewcommand{\footnoterule}{%
  \kern -3pt
  \hrule width \textwidth height 0.4pt
  \kern 2.6pt
}
\newcommand{\cmap}[4]{\ensuremath{\mathrm{CMap}_{#1}^{#2}(#3;#4)}}
\begin{document}
\title{\Large\bfseries Point-pushing actions for manifolds with boundary}
\author{\normalsize Martin Palmer and Ulrike Tillmann}
\date{\small\todaysdate}
\maketitle
{
\makeatletter
\renewcommand*{\BHFN@OldMakefntext}{}
\makeatother
\footnotetext{2020 \textit{Mathematics Subject Classification}: 55R80, 57N65, 55R10, 20F38}
\footnotetext{\textit{Key words and phrases}: Monodromy actions, point-pushing actions, Birman exact sequence, configuration-mapping spaces.}
\footnotetext{The authors would like to thank the Isaac Newton Institute for Mathematical Sciences, Cambridge, for support and hospitality during the programme \emph{Homotopy Harnessing Higher Structures} (EPSRC grant number EP/R014604/1), where this work was initiated. We also would also like to thank the IAS/Park City Mathematics Institute for an excellent research environment during the summer 2019 programme \emph{Quantum field theory and manifold invariants} (NSF Grant 1441467). The first author was partially supported by a grant of the Romanian Ministry of Education and Research, CNCS - UEFISCDI, project number PN-III-P4-ID-PCE-2020-2798, within PNCDI III.}
}
\begin{abstract}
Given a manifold $M$ and a point in its interior, the point-pushing map describes a diffeomorphism that pushes the point along a closed path. This defines a homomorphism from the fundamental group of $M$ to the group of isotopy classes of diffeomorphisms of $M$ that fix the basepoint. This map is well-studied in dimension $d=2 $ and is part of the Birman exact sequence. Here we study, for any $d\geq 3$ and $k\geq 1$, the map  from the $k$-th braid group of $M$  to the group of homotopy classes of homotopy equivalences of the $k$-punctured manifold $M \smallsetminus z$, and analyse its injectivity. Equivalently, we describe the monodromy of the universal bundle that associates to a configuration $z$ of size $k$ in $M$ its complement, the space $M\smallsetminus z$.
Furthermore, motivated by our work in \cite{PalmerTillmann2020homologicalstabilityconfigurationsection}, we describe the action of the braid group of $M$ on the fibres of configuration-mapping spaces.
\end{abstract}
%%%%%%%%%%%%%%%%%%%%%%%%%%%%%%%%%%%%%%%%%%%%%%%%%%%%%%%%%%%%%%%%%%%%%%%%%%%%%%%%%%
%%%%%%%%%%%%%%%%%%%%%%%%%%%%%%%%%%%%%%%%%%%%%%%%%%%%%%%%%%%%%%%%%%%%%%%%%%%%%%%%%%

%%%%%%%%%%%%%%%%%%%%%%%%%%%%%%%%%%%%%%%%%%%
%%%%%%%%%%%%%%%%%%%%%%%%%%%%%%%%%%%%%%%%%%%
\section{Introduction}\label{s:intro}
%%%%%%%%%%%%%%%%%%%%%%%%%%%%%%%%%%%%%%%%%%%
%%%%%%%%%%%%%%%%%%%%%%%%%%%%%%%%%%%%%%%%%%%

Let $M$ be a based, connected (smooth) manifold of dimension $d\geq 2$ and denote by $C_k(\mathring{M})$ the configuration space of $k$ unordered distinct points in its interior. We may think of it as the \textit{moduli space} of $k$ distinct points in $M$. Its \textit{universal bundle} is the fiber bundle $U_k(M)$ that associates to each $k$-tuple $z \in C_k(\mathring{M})$ the $k$-punctured manifold $M\smallsetminus z$:
\[\begin{tikzcd}
M \smallsetminus z \ar[r] & U_k(M) \ar[d,swap,"u"] \\
& C_k(\mathring{M}).
\end{tikzcd}\]
The primary goal of this paper is to describe the monodromy action (up to homotopy) of the above fibre bundle
\[
\mathrm{push}_{(M,z)} \colon \pi_1 (C_k(\mathring{M}), z) \longrightarrow \pi_0 (\text{hAut} (M \smallsetminus z))
\]
where $\text{hAut} (M \smallsetminus z) $ denotes the homotopy equivalences of the complement of $z$ in $M$; when $M$ has boundary we will consider the relative homotopy equivalences, and any base point is on the boundary of $M$.

Let $(X, *)$ be a fixed connected based space and assume that $M$ has boundary and a basepoint. Applying the continuous functor $\Map_* ( \quad ; X)$ (based maps to $X$) fibrewise to $u$ defines a new fibre bundle:
\[\begin{tikzcd}
\mathrm{Map}_*(M \smallsetminus z,X) \ar[r] & \cmap{k}{*}{M}{X} \ar[d,swap,"p"] \\
& C_k(\mathring{M}).
\end{tikzcd}\]
Our second goal is to give explicit formulas for the monodromy action for $p$ (up to homotopy). 
The total space is an example of the configuration-mapping  spaces studied in  \cite{EllenbergVenkateshWesterland2012HomologicalstabilityHurwitz, PalmerTillmann2020homologicalstabilityconfigurationsection}. 
Indeed, our interest in the monodromy actions  was motivated by our study of  homology stability  for configuration-mapping spaces. 

When $z$ is just a single point the monodromy map can be defined in terms of the point-pushing map: it sends an element $[\alpha] \in \pi_1(M, z)$ to the pointed isotopy class of the diffeomorphism that pushes the point $z$ along the curve $\alpha$ and is the identity outside  a small neighbourhood of the image of $\alpha$. It is not difficult to see that the point pushing map and more generally 
$\mathrm{push}_{(M,z)}$ factors through the (smooth) mapping class group:
\[
\mathrm{push}_{(M,z)}^{\mathsf{sm}} \colon \pi _1 (C_k(\mathring{M}), z )\longrightarrow
\pi_0 (\Diff (M; z));
\]
here $\Diff(M;z)$ denotes the group of (smooth) diffeomorphisms of $M$ that permute the points in $z$. If the boundary of $M$ is non-empty we will consider those diffeomorphisms that fix the boundary.

There is a possibly more familiar alternative description of $\mathrm{push}_{(M,z)}^{\mathsf{sm}}$. For $z$ a single point in $M$, consider the fibration
\[
\Diff (M ; z  ) \longrightarrow \Diff (M) \overset {eval} \longrightarrow M
\]
where $eval$ denotes the map that evaluates a diffeomorphism at $z$. As $M$ is path-connected, this gives rise to the exact sequence
\[
0 \longrightarrow L \longrightarrow \pi_1 (M, z) \longrightarrow \pi_0 (\Diff (M ; z )) \longrightarrow \pi_0 (\Diff (M)) \longrightarrow 0,
\]
where $L$ is by definition the kernel of the middle map.

For $M=S$ a surface of negative Euler characteristic, the connected components of $\Diff (M)$ are contractible \cite{EarleEells} \cite{EarleSchatz} and hence the fibration gives rise to the
Birman exact sequence \cite{Birman1969}
\[
0 \longrightarrow \pi_1 (S,z) \longrightarrow \pi_0 (\Diff (S ; z )) \longrightarrow \pi_0 (\Diff (S)) \longrightarrow 0.
\]
When $\alpha$ is represented by a simple curve that has a two-sided neighbourhood in $S$, its image is a product of the two Dehn twists around the two curves (oriented oppositely) that form the boundary of a tubular neighbourhood of $\alpha$. On the other hand, when $S=T$ is the torus, $\Diff (T) \simeq T \rtimes \mathrm {SL}_2 (\mathbb Z)$ \cite{Gramain1973} and $eval$ induces an isomorphism on fundamental groups:
\[
\pi_1 (\Diff (T); \mathrm{id}_T) \cong K = \pi_1 (T , z) \cong  \mathbb Z^2.
\]
Thus the smooth point-pushing map (and hence also the non-smooth version) is well-understood when $d=2$. Recently, Banks \cite{Banks2017} completely determined the kernel $L$ also when $d=3$. In particular she shows that $L$ is trivial unless the manifold $M$ is prime and Seifert fibered via an $S^1$ action.
In a different direction, Tshishiku~\cite{Tshishiku2015} studies the Nielsen realisation problem for the point-pushing map, i.e.~asks when the point-pushing map can be factored through $\Diff(M,z)$.
However, little seems to be known about the image of the point-pushing map in higher dimensions. Here we give a complete description, up to homotopy, of the induced self-map of $M\smallsetminus z$ for any element of the fundamental group when $M$ has non-empty boundary. As an example, in section \ref{s:examples}, we study the manifolds $M_{g,1}^d = \sharp^g (S^1 \times S^{d-1}) \smallsetminus \mathring{D}^d$ for $d\geq 3$ and $g\geq 0$ and show that the point-pushing map is injective for these examples. Inspired by our calculations in these examples, we discuss injectivity more generally in \S\ref{s:kernel-point-pushing}. We note that for these examples $M^d_{g,1}$, the Nielsen realisation problem is solvable as the fundamental group is free.

\paragraph{Outline and results.}

The paper is organised as follows. Section \ref{s:monodromy-actions} contains basic recollections about (relative) monodromy actions associated to fibrations and Section \ref{s:point-pushing-actions} discusses equivalent definitions of the point-pushing map (see Figure \ref{fig:point-pushing}), and considers the induced actions for associated fibre bundles obtained from the universal bundle $u$ by applying a continuous functor. Restricting from now on to manifolds with boundary and dimension $d\geq 3$, in Section \ref{s:formulas} we note that for a $k$-tuple $z$, up to homotopy, $M\smallsetminus z$ decomposes as a wedge of $M$ with a $k$-fold wedge sum of spheres $S^{d-1}$,
\[
M\smallsetminus z \simeq M \vee  W_k \qquad \text{ where } \qquad W_k := \bigvee _{k } S^{d-1}
\]
and $\pi_1 (C_k(\mathring{M}),z)$ is isomorphic to the wreath product 
\[
\pi_1(M)^k \rtimes \Sigma_k.
\] 
Thus the task of understanding the monodromy action is divided into understanding (on each of the terms $M$ and $W_k$) the action of the symmetric group elements, which is done in Section \ref{s:symmetric}, and the more complicated action of the loop elements, considered in Section \ref{s:loop}. The elements of the symmetric group act, up to homotopy, by the identity on $M$ and by permuting the $k$ summands in the wedge product $W_k$; compare Proposition \ref{p:formula-sigma}. The precise action of a loop $\alpha \in \pi_1 M$ is the content of Propositions \ref{p:formula-alpha-1} and \ref{p:formula-alpha-2}. Roughly, when $\alpha$ is in the $i$-th factor of the wreath product, it acts on the summand $W_k$ by taking the $i$-th sphere $S^{d-1}$ and mapping  a neighbourhood of its base point around $\alpha$ before covering itself by a degree $\pm 1$ map depending on whether $\alpha$ lifts to a loop in the orientation double cover of $M$. The other factors of $W_k$ are mapped by the inclusion. This completely describes the monodromy action of $\alpha$ on $W_k \to M\vee W_k$. The action of $\alpha$ on $M$ depends only on the sequence of intersections of $\alpha$ with the $(d-1)$-cells of $M$, or more precisely those of an embedded CW-complex $K$ of dimension at most $d-1$ such that $M$ deformation retracts onto it; compare formula \eqref{eq:pf2} and Figure \ref{fig:alpha-tau}. So, if there are no such intersections, for example when $K$ has no $(d-1)$-cells, then the action on $M$ is simply given by the inclusion. However, if $\alpha$ intersects a $(d-1)$-cell $\tau$ of $K$ with intersection number $\sharp (\tau, \alpha)$ then in addition to the inclusion of $M$, the monodromy action of $\alpha$ takes the cell $\tau$ to the $i$-th factor of $W_k$ by a degree $\sharp (\tau, \alpha)$ map. These assemble to give a map:
\[
M \simeq K \twoheadrightarrowlong K/K^{(d-2)} \simeq \bigvee_\tau S^{d-1} \longrightarrow S^{d-1} \subseteq W_k
\]
where  $K^{(d-2)}$ denotes the $(d-2)$-skeleton of $K$. This completely describes the monodromy action of $\alpha$ on $M \to M\vee W_k$ after projection to each factor $M$ and $W_k$. The full description of this action in Definition \ref{defn:pf2} takes into account the precise sequence of intersections of $\alpha$ and the $(d-1)$-cells. We illustrate this latter more complicated action of $\alpha$ with  examples in Section \ref{s:examples}. In Section \ref{s:kernel-point-pushing} we discuss the general question of injectivity for the point-pushing map. We show that, up to isomorphism, the kernel of the point-pushing map is independent of $k$ regardless whether diffeomorphisms, homeomorphisms or homotopy equivalences are considered. In particular, it is always injective when the manifold has non-empty boundary. Our main result in this direction is contained in Proposition \ref{p:kernel-of-pk}. Finally in Section \ref{s:mapping-spaces} the induced action on the fibres of $p$ for configuration mapping spaces is described. As a further application we compute the number of connected components for configuration mapping spaces in Corollary \ref{cor:connected_components}.

\paragraph{Acknowledgements.}
The authors are grateful to the anonymous referee for detailed corrections and suggestions for improvements to an earlier draft of this article.

\tableofcontents

%%%%%%%%%%%%%%%%%%%%%%%%%%%%%%%%%%%%%%%%%%%
%%%%%%%%%%%%%%%%%%%%%%%%%%%%%%%%%%%%%%%%%%%
\section{Monodromy actions}\label{s:monodromy-actions}
%%%%%%%%%%%%%%%%%%%%%%%%%%%%%%%%%%%%%%%%%%%
%%%%%%%%%%%%%%%%%%%%%%%%%%%%%%%%%%%%%%%%%%%

We first recall the \emph{monodromy action} associated to a fibration. Let $f \colon E \to B$ be a continuous map and write $F = f^{-1}(b)$ for a point $b \in B$. Assume that $f$ satisfies the \emph{homotopy lifting property} (\emph{covering homotopy property}) (\cf \cite[\S 4.2]{Hatcher2002} or \cite[\S 7]{May1999}) with respect to the spaces $F$ and $F \times [0,1]$. For example, this holds if $f$ is a Hurewicz fibration, or if $f$ is a Serre fibration and $F$ is a CW-complex. In particular it holds whenever $f$ is a fibre bundle and either $B$ is paracompact or $F$ is a CW-complex.

\begin{defn}
\label{d:haut}
For a space $F$, write $\mathrm{hAut}(F) \subseteq \mathrm{Map}(F,F)$ for the space of continuous self-maps $F \to F$, with the compact-open topology, that admit a homotopy inverse. This is a topological monoid under composition, and \emph{grouplike}, i.e.~the discrete monoid $\pi_0(\mathrm{hAut}(F))$ is a group (it is the automorphism group of $F$ in the homotopy category).

For a pair of spaces $(F,F_0)$, we write $\mathrm{End}(F|F_0)$ for the topological monoid (with the compact-open topology) of self-maps of $F$ that are the identity on $F_0$ and we write $\mathrm{hAut}(F|F_0) \subseteq \mathrm{End}(F|F_0)$ for the union of those path-components of $\mathrm{End}(F|F_0)$ corresponding to the invertible elements of the discrete monoid $\pi_0(\mathrm{End}(F|F_0))$. Note that $\mathrm{hAut}(F|\varnothing) = \mathrm{hAut}(F)$. See also Remark \ref{rmk:relative-hAut}.
\end{defn}

\begin{defn}[\emph{Monodromy actions.}]
\label{d:monodromy-action}
Under the above assumptions, the \emph{monodromy action} associated to $f$ is the action-up-to-homotopy
\begin{equation}
\label{eq:monodromy-action}
\mathrm{mon}_f \colon \pi_1(B,b) \longrightarrow \pi_0(\mathrm{hAut}(F))
\end{equation}
of $\pi_1(B,b)$ on $F$ defined as follows. For an element $[\gamma] \in \pi_1(B,b)$ represented by a loop $\gamma \colon [0,1] \to B$, let $g \colon F \times [0,1] \to E$ be a choice of lift in the diagram:
\begin{equation}
\label{eq:monodromy-lifting}
\centering
\begin{split}
\begin{tikzpicture}
[x=1mm,y=1mm]
\node (tl) at (0,15) {$F$};
\node (tr) at (40,15) {$E$};
\node (bl) at (0,0) {$F \times [0,1]$};
\node (bm) at (25,0) {$[0,1]$};
\node (br) at (40,0) {$B$};
\draw[->] (tl) to node[above,font=\small]{$\mathrm{incl}$} (tr);
\draw[->>] (bl) to (bm);
\draw[->] (bm) to node[below,font=\small]{$\gamma$} (br);
\draw[->] (tl) to node[left,font=\footnotesize]{$(-,0)$} (bl);
\draw[->] (tr) to node[right,font=\small]{$f$} (br);
\draw[->,densely dashed] (bl) to (tr);
\end{tikzpicture}
\end{split}
\end{equation}
and define
$\mathrm{mon}_f([\gamma]) = [g(-,1)]$.
\end{defn}

There is also a relative version of this construction. Let $F_0 \subseteq F$ be a subspace and assume that $f$ satisfies the \emph{relative homotopy lifting property} with respect to the pairs of spaces $(F,F_0)$ and $(F,F_0) \times [0,1]$. For example, this holds if $f$ is a Hurewicz fibration, or if $f$ is a Serre fibration and $(F,F_0)$ is a relative CW-complex. Also assume that we have a topological embedding $i \colon F_0 \times B \hookrightarrow E$ such that $f \circ i$ is the projection onto the second factor and $i(-,b)$ is the inclusion $F_0 \subseteq F \subseteq E$. (This says, essentially, that $f$ contains the trivial fibration over $B$ with fibre $F_0$ as a sub-fibration.)

\begin{defn}[\emph{Relative monodromy actions.}]
\label{d:relative-monodromy-action}
Under these assumptions, the \emph{relative monodromy action} associated to $f$ and $F_0$ is the action-up-to-homotopy
\begin{equation}
\label{eq:relative-monodromy-action}
\mathrm{mon}_f \colon \pi_1(B,b) \longrightarrow \pi_0(\mathrm{hAut}(F|F_0))
\end{equation}
constructed as follows. For an element $[\gamma] \in \pi_1(B,b)$ represented by a loop $\gamma \colon [0,1] \to B$, let $g \colon F \times [0,1] \to E$ be a choice of lift in the diagram:
\begin{equation}
\label{eq:relative-monodromy-lifting}
\centering
\begin{split}
\begin{tikzpicture}
[x=1mm,y=1mm]
\node (tl) at (-20,15) {$(F_0 \times [0,1]) \cup (F \times \{0\})$};
\node (tr) at (40,15) {$E$};
\node (bl) at (-20,0) {$F \times [0,1]$};
\node (bm) at (20,0) {$[0,1]$};
\node (br) at (40,0) {$B$};
\draw[->] (tl) to node[above,font=\small]{$(i \circ (\mathrm{id}_{F_0} \times \gamma)) \cup \mathrm{incl}$} (tr);
\draw[->>] (bl) to (bm);
\draw[->] (bm) to node[below,font=\small]{$\gamma$} (br);
\draw[->] (tl) to node[left,font=\small]{$\mathrm{incl}$} (bl);
\draw[->] (tr) to node[right,font=\small]{$f$} (br);
\draw[->,densely dashed] (bl) to (tr);
\end{tikzpicture}
\end{split}
\end{equation}
and define $\mathrm{mon}_f([\gamma]) = [g(-,1)]$.
\end{defn}

\begin{lem}
\label{l:relative-monodromy-action}
The monodromy action \eqref{eq:monodromy-action} and relative monodromy action \eqref{eq:relative-monodromy-action} are well-defined.
\end{lem}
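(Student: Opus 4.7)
The plan is to verify, for both the monodromy and relative monodromy actions, three things: (i) some lift $g$ of the diagram exists, (ii) the class $[g(-,1)]$ is independent of the chosen lift and of the representative $\gamma$ of $[\gamma]$, and (iii) $g(-,1)$ is actually a (relative) homotopy equivalence, so that its class lies in $\pi_0(\mathrm{hAut}(F))$ (respectively $\pi_0(\mathrm{hAut}(F|F_0))$). Claim (i) is immediate from the hypothesised HLP. For (ii) and (iii) I will use, as a recurring tool, the fact that any proper arc $A \subset \partial I^2$ with both endpoints at corners can be taken to $I \times \{0\}$ by a self-homeomorphism $h$ of $I^2$. Pre-composing with $\mathrm{id}_F \times h$ converts any extension problem on $F \times I^2$ whose prescribed data sit on $F \times A$ into a standard HLP problem for $F \times I$, to which the hypothesis applies directly (and analogously with relative HLP for $(F, F_0)$).

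For the independence from the choice of lift: given two lifts $g_0, g_1$ of the same $\gamma$, I would assemble a map $\phi$ on the U-shape $(F \times I \times \{0,1\}) \cup (F \times \{0\} \times I)$ by taking $g_0, g_1$ on the two horizontal sides and the constant inclusion on the left side; these agree at the two corners because $g_0(-,0) = g_1(-,0) = \mathrm{incl}$. Straightening the U and applying HLP with respect to $F \times I$ produces $\Phi \colon F \times I^2 \to E$ covering $(x,s,t) \mapsto \gamma(s)$, whose restriction to $F \times \{1\} \times I$ lives over $\gamma(1) = b$, hence takes values in $F$, and is the desired homotopy $g_0(-,1) \simeq g_1(-,1)$.

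For the independence from the representative of $[\gamma]$: given $\gamma_0 \simeq \gamma_1$ rel endpoints via $\Gamma \colon I^2 \to B$ and a lift $g_0$ of $\gamma_0$, I would assemble $\phi$ on the L-shape $(F \times \{0\} \times I) \cup (F \times I \times \{0\})$ by taking $g_0$ on the left side and the constant inclusion on the bottom. Straightening the L and applying HLP produces $\Phi$ covering $\Gamma$; its restriction to $F \times \{1\} \times I$ is a new lift $g_1'$ of $\gamma_1$ starting at $\mathrm{incl}$, and its restriction to $F \times I \times \{1\}$ lives over $\Gamma(s,1) = b$ and furnishes a homotopy $g_0(-,1) \simeq g_1'(-,1)$ in $F$. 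Combining with the previous step, applied to $g_1'$ and any chosen lift of $\gamma_1$, completes (ii).

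For (iii), I would apply the construction to the reverse loop $\bar\gamma$ to obtain $g'(-,1) \colon F \to F$. Concatenating $g$ with $g'$ (after suitable reparameterisation) yields a lift of $\gamma \cdot \bar\gamma$; since this loop is null-homotopic, (ii) shows its endpoint is homotopic to that of any lift of the constant loop, and the constant map $\mathrm{incl} \circ \mathrm{pr}_F$ is such a lift, with endpoint $\mathrm{id}_F$. Hence $g'(-,1) \circ g(-,1) \simeq \mathrm{id}_F$, and symmetrically the other way, so $g(-,1) \in \mathrm{hAut}(F)$. The relative version of each argument proceeds identically using relative HLP with respect to $(F, F_0)$ and $(F, F_0) \times I$, with the key observation that the boundary datum $i \circ (\mathrm{id}_{F_0} \times \gamma)$ in~\eqref{eq:relative-monodromy-lifting} constrains each lift to be the constant inclusion on $F_0$ at every time; hence $g(-,1)$ automatically restricts to $\mathrm{id}_{F_0}$ and lies in $\mathrm{End}(F|F_0)$, and then in $\mathrm{hAut}(F|F_0)$ by the invertibility argument. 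The step I expect to require the most care is the bookkeeping in the relative case, verifying that the corner-straightening homeomorphism is compatible with the relative CW-structure so that the relative HLP applies; this should follow routinely since $h$ acts only on the $I^2$-factor.
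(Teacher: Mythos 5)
The paper's own proof of this lemma is a one-line citation: it refers to Lemma~5.3 of the companion paper \cite{PalmerTillmann2020homologicalstabilityconfigurationsection} for the absolute case and declares the relative case ``similar''. Your proposal is thus a self-contained argument rather than a reproduction of the paper's proof, and on the whole it is correct and follows the standard homotopy-lifting scheme: reduce the U- and L-shaped extension problems to the hypothesised HLP via an arc-straightening homeomorphism of $I^2$, use the L-shape to compare different representatives of $[\gamma]$ and the U-shape to compare different lifts, then establish invertibility by lifting $\gamma\cdot\bar\gamma$ and appealing to the already-proved independence.

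Two minor imprecisions are worth flagging. First, in step~(iii), ``concatenating $g$ with $g'$'' is not literally well-defined: $g(-,1)$ is generally not $\mathrm{incl}$, so $g$ and $g'$ do not match at the join. What you need is to first precompose $g'$ in the $F$-coordinate, setting $\tilde g'(x,t)=g'(g(x,1),t)$, which is a lift of $\bar\gamma$ starting at $g(-,1)$; the concatenation $g*\tilde g'$ then lifts $\gamma\cdot\bar\gamma$ from $\mathrm{incl}$ with endpoint $g'(-,1)\circ g(-,1)$, as you conclude. Second, in the relative case the phrase ``constrains each lift to be the constant inclusion on $F_0$ at every time'' is not accurate: the prescribed value on $F_0\times[0,1]$ is $(y,t)\mapsto i(y,\gamma(t))$, which moves with $\gamma$. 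What is true, and what you actually use, is that at $t=1$ it equals $i(y,b)$, i.e.\ the inclusion $F_0\hookrightarrow F\hookrightarrow E$, so that $g(-,1)$ restricts to $\mathrm{id}_{F_0}$ and all the homotopies produced by the relative HLP are automatically rel~$F_0$. Neither issue affects the validity of the argument, only the phrasing.
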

\begin{proof}
For the monodromy action \eqref{eq:monodromy-action}, the proof is given in \cite[Lemma 5.3]{PalmerTillmann2020homologicalstabilityconfigurationsection}. The proof for the relative monodromy action \eqref{eq:relative-monodromy-action} is similar.
\end{proof}

%%%%%%%%%%%%%%%%%%%%%%%%%%%%%%%%%%%%%%%%%%%
%%%%%%%%%%%%%%%%%%%%%%%%%%%%%%%%%%%%%%%%%%%
\section{Point-pushing actions}\label{s:point-pushing-actions}
%%%%%%%%%%%%%%%%%%%%%%%%%%%%%%%%%%%%%%%%%%%
%%%%%%%%%%%%%%%%%%%%%%%%%%%%%%%%%%%%%%%%%%%

This section defines the \emph{point-pushing action} associated to a manifold $M$ and a finite subset $z \subset \mathring{M}$ of its interior. This is given in Definition \ref{d:point-pushing} via the monodromy action of the ``universal'' bundle \eqref{eq:point-pushing-bundle}. This may be refined (Remark \ref{rmk:smooth-point-pushing}) to a smooth version, and it has a simple geometric description (Lemma \ref{l:geometric-point-pushing}) for manifolds of dimension at least $3$. We then describe point-pushing actions on mapping spaces and other spaces associated functorially to the complement $M \smallsetminus z$ (see Definitions \ref{d:associated-point-pushing} and \ref{d:point-pushing-action-mapping}).

\begin{defn}
Let $U_k(M) \coloneqq \bar{C}_{1,k}(M)$ be the configuration space of $k$ unordered green points in the interior of $M$ and one red point in $M$, which may lie on the boundary. There is a fibre bundle
\begin{equation}
\label{eq:point-pushing-bundle}
u \colon U_k(M) \longrightarrow C_k(\mathring{M}),
\end{equation}
given by forgetting the red point, whose fibres are $u^{-1}(z) = M \smallsetminus z$. This is the \emph{universal bundle} referred to in the introduction.
\end{defn}

\begin{defn}[\emph{The point-pushing action.}]
\label{d:point-pushing}
For a manifold-with-boundary $M$ (we allow $\partial M = \varnothing$) and a finite subset $z \subseteq \mathring{M}$ of cardinality $k$, the \emph{point-pushing action} of $\pi_1(C_k(\mathring{M}),z)$ on $M \smallsetminus z$ is defined as the relative monodromy action of \eqref{eq:point-pushing-bundle}. More precisely, we write $F = u^{-1}(z)$, let $F_0 = \partial M \subseteq M \smallsetminus z$ and note that $(M \smallsetminus z,\partial M)$ is a relative CW-complex, since it is a (smooth) manifold with boundary. There is an embedding
\[
i \colon \partial M \times C_k(\mathring{M}) \longhookrightarrow U_k(M),
\]
given by colouring the point in $\partial M$ red and the $k$ points in the interior green, which satisfies the conditions of Definition \ref{d:relative-monodromy-action}. By Definition \ref{d:relative-monodromy-action} and Lemma \ref{l:relative-monodromy-action}, there is therefore a well-defined relative monodromy action
\begin{equation}
\label{eq:point-pushing-definition}
\mathrm{push}_{(M,z)} \colon \pi_1(C_k(\mathring{M}),z) \longrightarrow \pi_0(\mathrm{hAut}(M \smallsetminus z | \partial M)).
\end{equation}
This is, by definition, the \emph{point-pushing action} of $\pi_1(C_k(\mathring{M}),z)$ on $M \smallsetminus z$. For $[\gamma] \in \pi_1(C_k(\mathring{M}),z)$, the homotopy class of maps
\[
\mathrm{push}_\gamma = \mathrm{push}_{(M,z)}([\gamma]) \colon M \smallsetminus z \longrightarrow M \smallsetminus z
\]
(fixing $\partial M$ pointwise) is called the \emph{point-pushing map} of $[\gamma]$ on $M \smallsetminus z$.
\end{defn}

\begin{rmk}
\label{rmk:smooth-point-pushing}
The monodromy action \eqref{eq:point-pushing-definition} may be refined to an action by \emph{isotopy classes of diffeomorphisms}, as in the following diagram:
\begin{equation}
\label{eq:two-pushing-maps}
\centering
\begin{split}
\begin{tikzpicture}
[x=1mm,y=1mm]
\node (tl) at (0,6) {$\pi_1(C_k(\mathring{M}),z)$};
\node (bl) at (0,-6) {$\pi_1(C_k(\mathring{M}),z)$};
\node (tr) at (50,6) {$\pi_0(\mathrm{Diff}_\partial(M,z))$};
\node (br) at (50,-6) {$\pi_0(\mathrm{hAut}(M \smallsetminus z | \partial M)),$};
\draw[->] (tl) to node[above,font=\small]{$\mathrm{push}_{(M,z)}^{\mathsf{sm}}$} (tr);
\draw[->] (bl) to node[below,font=\small]{$\mathrm{push}_{(M,z)}$} (br);
\node at (0,0) {\rotatebox{90}{$=$}};
\draw[->] (tr) to node[right,font=\small]{$i$} (br);
\end{tikzpicture}
\end{split}
\end{equation}
where $\mathrm{Diff}_\partial(M)$ denotes the topological group of diffeomorphisms of $M$ fixing $\partial M$ pointwise, in the smooth Whitney topology, the topology on $\mathrm{hAut}(-)$ is the compact-open topology and $i$ is induced by the continuous injection $\mathrm{Diff}_\partial(M,z) \hookrightarrow \mathrm{hAut}(M \smallsetminus z | \partial M)$ given by $\varphi \mapsto \varphi|_{M \smallsetminus z}$.

To see this, recall (\cf \cite{Palais1960Localtrivialityof,Cerf1961Topologiedecertains,Lima1963localtrivialityof}) that there is a fibre bundle $\mathrm{Diff}_\partial(M) \to C_k(\mathring{M})$ given by evaluating a diffeomorphism at a finite set of points, whose fibre over $z$ is the subgroup $\mathrm{Diff}_\partial(M,z)$ of diffeomorphisms fixing $z$ as a subset. The map $\mathrm{push}_{(M,z)}^{\mathsf{sm}}$ in the diagram above is the connecting homomorphism in the long exact sequence of homotopy groups of this fibre bundle. We call this action the \emph{smooth point-pushing action} of $\pi_1(C_k(\mathring{M}))$ on $M \smallsetminus z$, and we call the map $\mathrm{push}_{\gamma}^{\mathsf{sm}} = \mathrm{push}_{(M,z)}^{\mathsf{sm}}([\gamma]) \colon (M,z) \to (M,z)$ the \emph{smooth point-pushing map} of $[\gamma]$ on $(M,z)$.
\end{rmk}

If $d=\mathrm{dim}(M) \geq 3$, there is a useful geometric description of the smooth point-pushing action, which we will use later. An element $\gamma \in \pi_1(C_k(\mathring{M}),z)$ is represented by a certain number of oriented loops $\gamma_1,\ldots,\gamma_j$ in $M$, each passing through at least one point of $z$, such that, for each point of $z$, exactly one of the loops passes through it. (The number $j \leq k$ of such loops is the number of cycles in the cycle decomposition of the permutation of $z$ induced by $\gamma$.) Choose representatives of the loops $\gamma_1,\ldots,\gamma_j$ that are smoothly embedded and have pairwise disjoint images (using the fact that $d\geq 3$ for disjointness). Also choose pairwise disjoint closed tubular neighbourhoods $T_1,\ldots,T_j$ of these loops, which we assume to be contained in the interior of $M$. Define a diffeomorphism
\[
\varphi_{(T_1,\ldots,T_j)} \colon (M,z) \longrightarrow (M,z)
\]
fixing $\partial M$ pointwise and $z$ setwise as follows. On the complement of the tubular neighbourhoods, $\varphi_{(T_1,\ldots,T_j)}$ is the identity. Suppose that the tubular neighbourhood $T_i$ contains $k_i$ of the points of $z$ (so $k_1 + \cdots + k_j = k$) and identify $T_i \smallsetminus (z \cap T_i)$ with
\[
((D^{d-1} \times \bR) \smallsetminus (\{0\} \times \bZ)) / {\sim},
\]
where $\sim$ is either
\begin{itemizeb}
    \item the equivalence relation given by $(x,t) \sim (x,t+k_i)$, or
    \item the equivalence relation given by $(x,t) \sim (r(x),t+k_i)$, where $r \colon D^{d-1} \to D^{d-1}$ is a fixed reflection in a hyperplane passing through $0$,
\end{itemizeb}
depending on whether or not the loop $\gamma_i$ lifts to a loop in the orientation double cover of $M$. We moreover arrange that this identification restricts to an identification of the cores of these two tubes. Choose a smooth function $\lambda \colon [0,1] \to [0,1]$ that takes the value $1$ on $[0,\epsilon]$ and the value $0$ on $[1-\epsilon,1]$ for some $\epsilon > 0$. Then the restriction of $\varphi_{(T_1,\ldots,T_j)}$ to $T_i$, under this identification, is defined by
\[
\varphi_{(T_1,\ldots,T_j)}(x,t) = (x,t+\lambda(\lvert x \rvert)).
\]
See Figure \ref{fig:point-pushing} for an illustration. We record this geometric description in the following lemma.

\begin{lem}[\emph{Geometric point-pushing.}]
\label{l:geometric-point-pushing}
Let $M$ be a smooth manifold-with-boundary of dimension $d \geq 3$ and let $[\gamma] \in \pi_1(C_k(\mathring{M}),z)$. Choose a collection of smoothly embedded loops $\gamma_1,\ldots,\gamma_j$ and tubular neighbourhoods $T_1,\ldots,T_j$ as described above. Then
\[
[\varphi_{(T_1,\ldots,T_j)}] = \mathrm{push}_{(M,z)}^{\mathsf{sm}}([\gamma]) \in \pi_0(\mathrm{Diff}_\partial(M,z)).
\]
\end{lem}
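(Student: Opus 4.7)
The plan is to verify directly that the smooth point-pushing map, defined via the connecting homomorphism of the evaluation fibration $\mathrm{Diff}_\partial(M) \to C_k(\mathring{M})$, sends $[\gamma]$ to $[\varphi_{(T_1,\ldots,T_j)}]$. Concretely, I will exhibit an explicit isotopy $\Phi \colon [0,1] \to \mathrm{Diff}_\partial(M)$ with $\Phi_0 = \mathrm{id}_M$ and $\Phi_1 = \varphi_{(T_1,\ldots,T_j)}$, and then show that the loop $s \mapsto \Phi_s(z)$ in $C_k(\mathring{M})$ is homotopic to $\gamma$. By the construction of the connecting homomorphism as the homotopy class of the endpoint of any such lift, this will give the desired equality.

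The isotopy is the evident one extrapolated from the definition of $\varphi_{(T_1,\ldots,T_j)}$: define $\Phi_s$ to be the identity outside $T_1 \cup \cdots \cup T_j$, and on each tube $T_i$, using the identification with $((D^{d-1} \times \bR) \smallsetminus (\{0\}\times\bZ))/{\sim}$ extended over the cores, set
\[
\Phi_s(x,t) = (x,t + s \lambda(\lvert x \rvert)).
\]
Since $\lambda(0) = 1$, each point $(0,n)$ on the core moves to $(0,n+s)$; since $\lambda$ vanishes near the boundary of the tube, each $\Phi_s$ is a well-defined diffeomorphism of $M$ fixing $\partial M$. At $s = 1$ we recover $\varphi_{(T_1,\ldots,T_j)}$, and at $s = 0$ we have the identity.

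Next I will check that the evaluation loop $s \mapsto \Phi_s(z)$ represents $[\gamma]$. The key observation is that, within each tube $T_i$, the $k_i$ points of $z\cap T_i$ traverse the core $\{0\}\times \bR$ at unit speed; because of the gluing relation $(x,t)\sim(x,t+k_i)$ (or its reflected version, which acts trivially on the core), after time $1$ the $k_i$ points have cyclically permuted along the image of $\gamma_i$. Thus the loop $s \mapsto \Phi_s(z)$ in $C_k(\mathring{M})$ is, after a straight-line homotopy in the tube, the concatenation of the cycles defining $\gamma$; since $\gamma_1,\ldots,\gamma_j$ were chosen as representatives of precisely these cycles and the tubes are disjoint, this evaluation loop is homotopic in $C_k(\mathring{M})$ to $\gamma$.

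The main subtlety I expect is to handle the non-orientable case correctly: when $\gamma_i$ does not lift to a loop in the orientation double cover, the identification of $T_i$ uses the reflection $r$, and one needs to verify that the formula $(x,t)\mapsto(x, t + s\lambda(\lvert x\rvert))$ still descends to a well-defined global isotopy (this is clear since the formula is equivariant for $r$ acting on the first factor and translation on the second). Once this is checked, combined with the fact that $r$ fixes $0 \in D^{d-1}$ and hence leaves the core pointwise unchanged, the evaluation loop argument above goes through verbatim, completing the identification $[\varphi_{(T_1,\ldots,T_j)}] = \mathrm{push}^{\mathsf{sm}}_{(M,z)}([\gamma])$.
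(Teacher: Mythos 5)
The paper states this lemma without proof, treating it as an immediate consequence of the construction of $\varphi_{(T_1,\ldots,T_j)}$ and of the definition of $\mathrm{push}^{\mathsf{sm}}_{(M,z)}$ as the connecting homomorphism of the evaluation fibration $\mathrm{Diff}_\partial(M) \to C_k(\mathring{M})$. Your argument supplies the verification the paper leaves implicit, and it is correct: the explicit path of diffeomorphisms $\Phi_s(x,t) = (x,\, t + s\,\lambda(\lvert x \rvert))$ (identity outside the tubes) lifts the evaluation loop $s \mapsto \Phi_s(z)$, which traces the cores of the tubes and is therefore homotopic in $C_k(\mathring{M})$ to $\gamma$ by the way $\gamma_1,\ldots,\gamma_j$ were chosen; since $\Phi_0 = \mathrm{id}$ and $\Phi_1 = \varphi_{(T_1,\ldots,T_j)}$, the connecting homomorphism sends $[\gamma]$ to $[\varphi_{(T_1,\ldots,T_j)}]$. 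You also correctly address the only potential subtlety: in the non-orientable case the reflection $r$ fixes $0 \in D^{d-1}$ and the formula for $\Phi_s$ is $r$-equivariant, so the isotopy descends to the twisted tube and the core motion is unaffected. One cosmetic remark: the evaluation loop is not a ``concatenation of cycles'' — it is a single loop in $C_k(\mathring{M})$ in which all $k$ points move simultaneously, each traversing the arc of its core from integer coordinate $n$ to $n+1$; this simultaneous motion is precisely what makes it homotopic to the representative of $\gamma$ fixed in the set-up.
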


\begin{figure}[t]
\centering
\labellist
\small\hair 2pt
 \pinlabel {$\mathrm{id}$} [t] at 168 604
 \pinlabel {$0$} [t] at 77 525
 \pinlabel {$4$} [t] at 257 525
 \pinlabel {$T_1$} [tr] at 112 223
 \pinlabel {$T_2$} [tr] at 256 183
 \pinlabel {$r$} [t] at 168 84
 \pinlabel {$0$} [t] at 122 0
 \pinlabel {$2$} [t] at 212 0
 \pinlabel {\footnotesize non-orientable loop} [ ] at 340 225
 \pinlabel {$M$} [ ] at 196 419
\endlabellist
\includegraphics[scale=0.5]{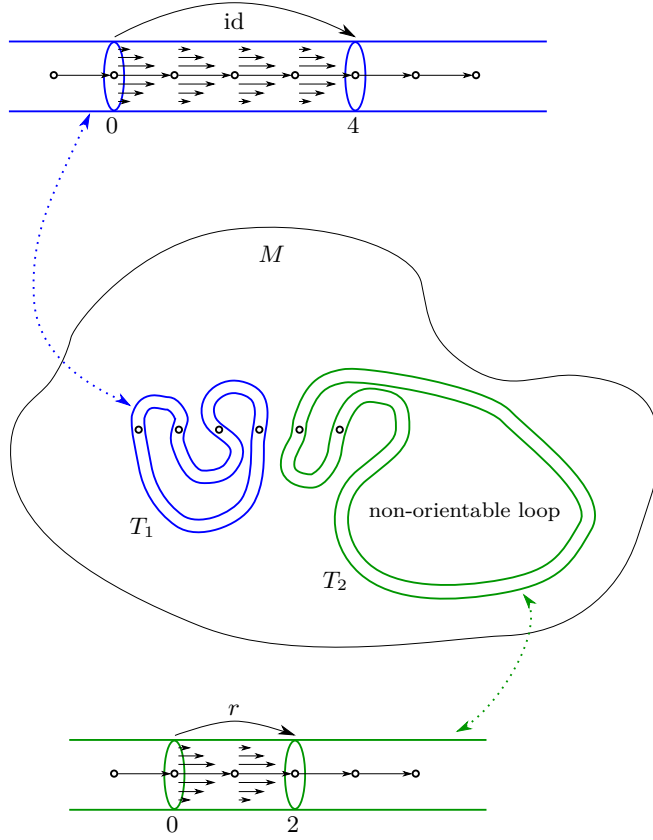}
\caption{An example of the point-pushing action for $\lvert z \rvert = 6$ and where the loop $\gamma \in \pi_1(C_6(\mathring{M}),z)$ induces a permutation of $z$ with one $4$-cycle and one $2$-cycle.}
\label{fig:point-pushing}
\end{figure}

\paragraph{Associated point-pushing actions.}

We have so far described the ``universal'' point-pushing action of $\pi_1(C_k(\mathring{M}),z)$ on the complement $M \smallsetminus z$, for a subset $z \subset \mathring{M}$ with $\lvert z \rvert = k$. We now discuss induced point-pushing actions associated to continuous endofunctors $T \colon \mathrm{Top} \to \mathrm{Top}$ or $T \colon \mathrm{Top}_* \to \mathrm{Top}_*$ (or, more generally, to a continuous functor of the form \eqref{eq:functor-to-Top}).

\begin{defn}[\emph{Associated fibre bundles.}]
\label{d:associated-bundle}
We first recall that, if $f \colon E \to B$ is a fibre bundle with fibre $F$ (and structure group $\mathrm{Homeo}(F)$ in the compact-open topology), and if $T \colon \mathrm{Top} \to \mathrm{Top}$ is a continuous endofunctor (covariant or contravariant) of the topologically-enriched category of spaces, there is an associated fibre bundle
\begin{equation}
\label{eq:induced-fibre-bundle}
f_T \colon T_{\mathrm{fib}}(E) \too B
\end{equation}
with fibre $T(F)$, constructed by ``applying $T$ fibrewise'' to $E$. More precisely, the functor $T$ restricts to a continuous group (anti-)homomorphism
\begin{equation}
\label{eq:action-on-TF}
\mathrm{Homeo}(F) \too \mathrm{Homeo}(T(F)),
\end{equation}
and we define \eqref{eq:induced-fibre-bundle} to be the Borel construction $\mathrm{Prin}(E) \times_{\mathrm{Homeo}(F)} T(F)$, where $\mathrm{Prin}(E) \to B$ is the principal $\mathrm{Homeo}(F)$-bundle associated to $f$, and where $\mathrm{Homeo}(F)$ acts on $T(F)$ via \eqref{eq:action-on-TF}. (See \cite[\S\S 8--9]{Steenrod1951} for more details.)

There is an exactly analogous construction if $f$ is a fibre bundle in the pointed category $\mathrm{Top}_*$ (i.e.~with structure group $\mathrm{Homeo}_*(F)$) and $T \colon \mathrm{Top}_* \to \mathrm{Top}_*$ is a continuous endofunctor of the topologically-enriched category of based spaces $\mathrm{Top}_*$. In this case $T$ restricts to a continuous group homomorphism 
\begin{equation}
\label{eq:action-on-TF-based}
\mathrm{Homeo}_*(F) \too \mathrm{Homeo}_*(T(F)),
\end{equation}
so we may define \eqref{eq:induced-fibre-bundle} to be the Borel construction $\mathrm{Prin}_*(E) \times_{\mathrm{Homeo}_*(F)} T(F)$, where $\mathrm{Prin}_*(E) \to B$ is the principal $\mathrm{Homeo}_*(F)$-bundle associated to $f$, and where $\mathrm{Homeo}_*(F)$ acts on $T(F)$ via \eqref{eq:action-on-TF-based}.
\end{defn}

\begin{defn}[\emph{Configuration-mapping spaces.}]
\label{d:cmap-1}
Let $X$ be any space and consider the (contravariant) continuous functor
\[
T = \mathrm{Map}(-,X) \colon \mathrm{Top} \too \mathrm{Top}.
\]
The fibre bundle associated by $T$ to the bundle \eqref{eq:point-pushing-bundle} is denoted by
\[
\cmap{k}{}{M}{X} \coloneqq T_{\mathrm{fib}}(U_k(M)) \too C_k(\mathring{M}),
\]
and its total space is the $k$-th \emph{configuration-mapping space} of $M$ and $X$. A point in $\cmap{k}{}{M}{X}$ consists of a configuration $z \subset \mathring{M}$ in the interior of $M$ and a continuous map $M \smallsetminus z \to X$.

If $\partial M \neq \varnothing$, the fibre bundle \eqref{eq:point-pushing-bundle} admits a canonical section given by $z \mapsto (z,*)$, where $* \in \partial M$ is a choice of basepoint, allowing us to reduce its structure group to the based homeomorphism group $\mathrm{Homeo}_*(M \smallsetminus z)$, where $z$ is a basepoint of $C_k(\mathring{M})$. Thus, choosing a basepoint for $X$, we may also consider the fibre bundle associated to \eqref{eq:point-pushing-bundle} by the continuous functor $T = \mathrm{Map}_*(-,X) \colon \mathrm{Top}_* \to \mathrm{Top}_*$, which is denoted by
\[
\cmap{k}{*}{M}{X} \coloneqq T_{\mathrm{fib}}(U_k(M)) \too C_k(\mathring{M}).
\]
A point in $\cmap{k}{*}{M}{X}$ consists of a configuration $z \subset \mathring{M}$ in the interior of $M$ together with a \emph{based} continuous map $M \smallsetminus z \to X$.
\end{defn}

\begin{defn}[\emph{Associated fibre bundles, II.}]
\label{d:associated-bundle-2}
The structure group of the bundle \eqref{eq:point-pushing-bundle} may be reduced further to $\mathrm{Homeo}_{\partial M}(M,z)$, the group of self-homeomorphisms of $M$ that fix $z$ setwise and $\partial M$ pointwise. Hence any continuous functor
\begin{equation}
\label{eq:functor-to-Top}
T \colon \mathrm{Homeo}_{\partial M}(M,z) \too \mathrm{Top}
\end{equation}
(i.e., any space with a continuous action of $\mathrm{Homeo}_{\partial M}(M,z)$) associates to \eqref{eq:point-pushing-bundle} a new fibre bundle
\begin{equation}
\label{eq:associated-bundle-2}
T_{\mathrm{fib}}(U_k(M)) \too C_k(\mathring{M})
\end{equation}
by taking the Borel construction of the associated principal $\mathrm{Homeo}_{\partial M}(M,z)$-bundle.
\end{defn}

\begin{rmk}
For comparison, the associated fibre bundles of Definition \ref{d:associated-bundle} above correspond to continuous functors \eqref{eq:functor-to-Top} that are of the form
\[
\mathrm{Homeo}_{\partial M}(M,z) \xrightarrow{\; -|_{M \smallsetminus z} \;} \mathrm{Homeo}(M \smallsetminus z) \subset \mathrm{Top} \too \mathrm{Top},
\]
in other words, that factor through an endofunctor of $\mathrm{Top}$. However, there are interesting (and more subtle) examples that do not extend in this way, as we show in the next example.
\end{rmk}

\begin{defn}[\emph{Configuration-mapping spaces, II.}]
\label{d:cmap-2}
Fix a basepoint $* \in \partial M$, a based space $X$ and a subset $c \subseteq [S^{d-1},X]$ of unbased homotopy classes of maps $S^{d-1} \to X$. If $M$ is non-orientable we assume that $c$ consists of fixed points under the involution of $[S^{d-1},X]$ given by a reflection of $S^{d-1}$. There is a continuous functor
\begin{equation}
\label{eq:map-c-functor}
\mathrm{Map}_*^c(-,X) \colon \mathrm{Homeo}_{\partial M}(M,z) \too \mathrm{Top}
\end{equation}
defined as follows. The unique object on the left-hand side is sent to the space (with the compact-open topology) of based, continuous maps $f \colon M \smallsetminus z \to X$ with ``monodromy'' valued in $c$. The last condition means that, if $e \colon D^d \to M$ is an embedding such that $z \cap e(D^d)$ is a single point in the interior of $e(D^d)$, then the homotopy class of $f \circ e|_{\partial D^d}$ lies in $c$. (If $M$ is orientable, we fix an orientation and require that $e$ is orientation-preserving in the preceding sentence.) One may then check that the natural action of $\varphi \in \mathrm{Homeo}_{\partial M}(M,z)$ on the mapping space $\mathrm{Map}_*(M \smallsetminus z , X)$ preserves the subspace $\mathrm{Map}_*^c(M \smallsetminus z , X)$. The fibre bundle associated by \eqref{eq:map-c-functor} to the bundle \eqref{eq:point-pushing-bundle} is denoted by
\begin{equation}
\label{eq:configuration-mapping-space-charge}
\cmap{k}{c,*}{M}{X} \too C_k(\mathring{M}),
\end{equation}
and its total space is the $k$-th \emph{based configuration-mapping space} of $M$ and $X$ with ``\emph{monodromy}'' or ``\emph{charge}'' in $c$.
\end{defn}

\begin{rmk}
Configuration-mapping spaces are discussed in more detail in \cite[\S 2]{PalmerTillmann2020homologicalstabilityconfigurationsection}, and may be generalised to \emph{configuration-section spaces}, which are defined in \cite[\S 3]{PalmerTillmann2020homologicalstabilityconfigurationsection}. There are also many other natural continuous functors $T \colon \mathrm{Top} \to \mathrm{Top}$ or $T \colon \mathrm{Homeo}_{\partial M}(M,z) \to \mathrm{Top}$ that may be used to construct interesting fibre bundles associated to the ``universal'' bundle \eqref{eq:point-pushing-bundle}. For example, one could take $T$ to be suspension $\Sigma^k (-)$, symmetric powers $SP^k(-)$ or configuration spaces $C_k(-)$, each of which lead to a certain flavour of bicoloured configuration spaces. Other interesting examples are co-representable functors, such as the based and free loop-space functors $\Omega (-)$ and $L(-)$, which lead to spaces of configurations equipped with (based or free) continuous loops in their complement.
\end{rmk}

\begin{defn}[\emph{Associated point-pushing action.}]
\label{d:associated-point-pushing}
For a space $T$ with a continuous action of $\mathrm{Homeo}_{\partial M}(M,z)$, viewed as a continuous functor $T \colon \mathrm{Homeo}_{\partial M}(M,z) \to \mathrm{Top}$, we have from Definition \ref{d:associated-bundle-2} a fibre bundle \eqref{eq:associated-bundle-2}
\[
T_{\mathrm{fib}}(U_k(M)) \too C_k(\mathring{M})
\]
with fibre $T$. The \emph{associated point-pushing action} of $\pi_1(C_k(\mathring{M}),z)$ on $T$ is then the monodromy action of this fibre bundle, denoted by
\begin{equation}
\label{eq:associated-point-pushing}
\mathrm{push}_{(M,z,T)} \colon \pi_1(C_k(\mathring{M}),z) \too \pi_0(\mathrm{hAut}(T)).
\end{equation}
\end{defn}

\begin{defn}[\emph{Point-pushing action on mapping spaces.}]
\label{d:point-pushing-action-mapping}
In particular, if we specialise to the case $T = \mathrm{Map}_*^c(M \smallsetminus z , X)$ for a based space $X$ and a subset $c \subseteq [S^{d-1},X]$, as in Definition \ref{d:cmap-2}, we have an associated point-pushing action
\[
\mathrm{push}_{(M,z,X,c)} \colon \pi_1(C_k(\mathring{M}),z) \too \pi_0(\mathrm{hAut}(\mathrm{Map}_*^c(M \smallsetminus z , X))).
\]
which is the monodromy action of the fibre bundle \eqref{eq:configuration-mapping-space-charge}. This can be generalised to a point-pushing action of $\pi_1(C_k(\mathring{M}),z)$ on $\mathrm{Map}^c((M \smallsetminus z,D),(X,*))$ for any subset $D \subseteq \partial M$.
\end{defn}

The following elementary lemma relates the \emph{point pushing action} of $\pi_1(C_k(\mathring{M}),z)$ on $M \smallsetminus z$ (Definition \ref{d:point-pushing}) and its \emph{associated point-pushing action} on the mapping space $\mathrm{Map}^c((M \smallsetminus z,D),(X,*))$ (Definition \ref{d:point-pushing-action-mapping}). Choose $k$ pairwise disjoint balls in $M$ centred at the points $z$ and let
\[
s \colon S^{d-1} \times \{1,\ldots,k\} \lhook\joinrel\longrightarrow M \smallsetminus z
\]
be the inclusion of their boundaries. Denote by $\mathrm{hAut}^s(M \smallsetminus z | \partial M) \subseteq \mathrm{hAut}(M \smallsetminus z | \partial M)$ the subspace of homotopy automorphisms $f$ of $M\smallsetminus z$ such that $f \circ s \simeq s \circ g$ for some homotopy automorphism $g$ of $S^{d-1} \times \{1,\ldots,k\}$. Note that the point-pushing action \eqref{eq:point-pushing-definition} takes values in $\pi_0$ of this subspace.

\begin{lem}
\label{l:associated-point-pushing-comparison}
The point-pushing action of $\pi_1(C_k(\mathring{M}),z)$ on $\mathrm{Map}^c((M \smallsetminus z,D),(X,*))$ is obtained from its point-pushing action on $M \smallsetminus z$ by pre-composition. In other words, the following diagram commutes:
\begin{equation}
\label{eq:two-pushing-maps2}
\centering
\begin{split}
\begin{tikzpicture}
[x=1mm,y=1mm]
\node (tl) at (0,6) {$\pi_1(C_k(\mathring{M}),z)$};
\node (bl) at (0,-6) {$\pi_1(C_k(\mathring{M}),z)$};
\node (tr) at (70,6) {$\pi_0(\mathrm{hAut}^s(M \smallsetminus z | \partial M))$};
\node (br) at (70,-6) {$\pi_0\bigl(\mathrm{hAut}\bigl(\mathrm{Map}^c((M \smallsetminus z,D),(X,*))\bigr)\bigr),$};
\draw[->] (tl) to node[above,font=\small]{$\mathrm{push}_{(M,z)}$} (tr);
\draw[->] (bl) to node[below,font=\small]{$\mathrm{push}_{(M,z,X,c)}$} (br);
\node at (0,0) {\rotatebox{90}{$=$}};
\draw[->] (tr) to node[right,font=\small]{$\circ$} (br);
\end{tikzpicture}
\end{split}
\end{equation}
where the right vertical homomorphism $\circ$ is defined by composition. In particular, the action up to homotopy of $\pi_0(\mathrm{hAut}^s(M \smallsetminus z | \partial M))$ on the mapping space $\mathrm{Map}((M \smallsetminus z,D),(X,*))$ preserves the subspace $\mathrm{Map}^c((M \smallsetminus z,D),(X,*))$ for each subset $c \subseteq [S^{d-1},X]$, assuming, if $M$ is non-orientable, that $c$ is closed under the involution given by reflecting in $S^{d-1}$.
\end{lem}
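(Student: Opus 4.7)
The plan is to lift a representative of $[\gamma]$ once in the universal bundle $u$ and then transport this lift via the contravariant functor $\mathrm{Map}^c((-,D),(X,*))$ to obtain simultaneously a compatible lift in the associated bundle \eqref{eq:configuration-mapping-space-charge}. I would fix a representative loop $\gamma \colon [0,1] \to C_k(\mathring{M})$ and, following Definition \ref{d:relative-monodromy-action}, choose a lift $g \colon (M \smallsetminus z) \times [0,1] \to U_k(M)$ of $\gamma$ that restricts to the trivial section on $\partial M \times [0,1]$, so that $\varphi := g(-,1)$ is a representative of $\mathrm{push}_{(M,z)}([\gamma])$ in $\mathrm{hAut}^s(M \smallsetminus z | \partial M)$.

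The key step is to upgrade $g$ to a family of honest homeomorphisms. Since $u$ is a locally trivial fibre bundle whose structure group reduces to $\mathrm{Homeo}_{\partial M}(M,z)$ (precisely the reduction used in Definition \ref{d:associated-bundle-2}), lifting $\gamma$ to the associated principal bundle produces a continuous family $\tilde g \colon [0,1] \to \mathrm{Homeo}_{\partial M}(M)$ with $\tilde g(0) = \mathrm{id}_M$ and $\tilde g(t)(z) = \gamma(t)$. Modifying $g$ within its homotopy class rel $\bigl((M \smallsetminus z) \times \{0\}\bigr) \cup (\partial M \times [0,1])$ (which does not alter its class as a lift) I may assume $g(-,t) = \tilde g(t)|_{M \smallsetminus z}$. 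Applying $\mathrm{Map}^c((-,D),(X,*))$ fibrewise then yields
\[
G \colon \mathrm{Map}^c((M \smallsetminus z,D),(X,*)) \times [0,1] \too T_{\mathrm{fib}}(U_k(M)), \qquad G(h,t) := h \circ \tilde g(t)^{-1}|_{M \smallsetminus \gamma(t)},
\]
which by construction covers $\gamma$, satisfies $G(-,0) = \mathrm{incl}$, and lands in the $\mathrm{Map}^c$-fibre over each $\gamma(t)$. Hence $G$ is a valid lift in the sense of Definition \ref{d:monodromy-action}, and
\[
\mathrm{push}_{(M,z,X,c)}([\gamma]) = [G(-,1)] = [h \mapsto h \circ \varphi^{-1}],
\]
which, under the right-action convention making the composition map $\circ$ into a group homomorphism, is exactly the image of $\mathrm{push}_{(M,z)}([\gamma])$ under the right-hand vertical map of \eqref{eq:two-pushing-maps2}.

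The ``in particular'' assertion then falls out for free: by construction each $G(-,t)$ lies in the $\mathrm{Map}^c$-fibre, so pre-composition with $\varphi$ preserves the subspace $\mathrm{Map}^c((M \smallsetminus z,D),(X,*)) \subseteq \mathrm{Map}((M \smallsetminus z,D),(X,*))$; the hypothesis on $c$ in the non-orientable case handles the potential sign change coming from whether loops lift to the orientation double cover, as is already visible in Lemma \ref{l:geometric-point-pushing}. The only non-trivial point I anticipate is the upgrade of the abstract HLP lift $g$ to a family of global homeomorphisms $\tilde g(t)$ of $M$, which genuinely uses the fibre-bundle (not merely Hurewicz-fibration) structure of $u$ together with the evaluation-type fibration $\mathrm{Homeo}_\partial(M) \to C_k(\mathring{M})$; the remainder of the argument is a routine unwinding of Definitions \ref{d:monodromy-action} and \ref{d:associated-bundle-2}.
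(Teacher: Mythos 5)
The paper does not supply a proof of this lemma---it is flagged as ``elementary'' in the sentence preceding it---so there is no argument to compare against directly. Your approach is the correct and natural one: reduce the structure group of $u$ to $\mathrm{Homeo}_{\partial M}(M,z)$, lift $\gamma$ to a path $\tilde g$ of boundary-fixing homeomorphisms via the evaluation fibration, observe that $g(-,t) = \tilde g(t)|_{M\smallsetminus z}$ is then a preferred lift in the universal bundle, and check that $(h,t) \mapsto h \circ \tilde g(t)^{-1}|_{M\smallsetminus\gamma(t)}$ is a lift in the associated bundle lying in the $\mathrm{Map}^c$-fibres at all times (using $\tilde g(t)(\partial M) = \partial M$ pointwise, hence $\tilde g(t)^{-1}(D)=D$). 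The appearance of $\varphi^{-1}$ rather than $\varphi$ is the correct way to make $\circ$ a group homomorphism, matching the direction the Borel construction forces; this is consistent with the paper's (unspecified) conventions. One small simplification: the ``modify $g$ within its homotopy class'' step is unnecessary---you only need to exhibit $\tilde g(t)|_{M\smallsetminus z}$ as \emph{one} valid lift, and uniqueness of the resulting class is supplied by Lemma~\ref{l:relative-monodromy-action}.

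There is one genuine, though small, gap. The ``in particular'' clause is not merely a corollary of the diagram: it is a \emph{prerequisite} for the right vertical map $\circ$ to be well-defined on all of $\pi_0(\mathrm{hAut}^s(M\smallsetminus z\mid\partial M))$, and your argument only establishes the $\mathrm{Map}^c$-preservation for the elements $\varphi$ that arise as point-pushing maps (where the lift $G$ is available). For a general $[f]\in\pi_0(\mathrm{hAut}^s(M\smallsetminus z\mid\partial M))$ one should argue directly from the defining property $f\circ s \simeq s\circ g$: precomposition with $f$ carries the charge tuple $(h\circ s_i)_i$ to $(h\circ s_{\tau(i)}\circ g_i)_i$ for some permutation $\tau$ and some degree-$\pm1$ maps $g_i$ of $S^{d-1}$, which preserves the set of tuples with entries in $c$ provided $c$ is closed under the reflection involution. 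This is just as elementary as the rest, but it is logically a separate observation and should not be presented as falling out of the lift construction.
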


\begin{rmk}
We have focused in this section (except in Remark \ref{rmk:smooth-point-pushing} and Lemma \ref{l:geometric-point-pushing}) on monodromy actions -- by \emph{homotopy automorphisms} -- of fibrations (as discussed abstractly in \S\ref{s:monodromy-actions}). This is because our main result is an explicit description of the monodromy action \emph{by homotopy automorphisms} of the universal bundle \eqref{eq:point-pushing-bundle} (and, as a corollary, of the configuration-mapping bundle \eqref{eq:configuration-mapping-space-charge}). However, the constructions of this section also have direct analogues for monodromy actions by homeomorphisms (diffeomorphisms) of fibre bundles (smooth fibre bundles). See also \S\ref{s:kernel-point-pushing}, where we discuss kernels of point-pushing actions in all three settings.
\end{rmk}

%%%%%%%%%%%%%%%%%%%%%%%%%%%%%%%%%%%%%%%%%%%
%%%%%%%%%%%%%%%%%%%%%%%%%%%%%%%%%%%%%%%%%%%
\section{Formulas for point-pushing actions}\label{s:formulas}
%%%%%%%%%%%%%%%%%%%%%%%%%%%%%%%%%%%%%%%%%%%
%%%%%%%%%%%%%%%%%%%%%%%%%%%%%%%%%%%%%%%%%%%

Let $M$ be a connected manifold of dimension $d \geq 3$, let $z \subset \mathring{M}$ be a $k$-point configuration in its interior, $D \subseteq \partial M$ an embedded $(d-1)$-dimensional disc in its boundary, $X$ a based space and $c \subseteq [S^{d-1},X]$ a non-empty set of unbased homotopy classes of maps $S^{d-1} \to X$. Our goal is to give explicit formulas for the point-pushing action of $\pi_1(C_k(\mathring{M}),z)$ on $M \smallsetminus z$ (Definition \ref{d:point-pushing}). These will be given in the following two sections; in this section we first fix notation and the identifications that we will use.

\begin{notation}
Let $W_k$ denote a wedge $\bigvee^k S^{d-1}$ of $k$ copies of the $(d-1)$-sphere.
\end{notation}

\begin{construction}
\label{construction:equivalence-of-pairs}
Let us choose an explicit homotopy equivalence of pairs
\begin{equation}
\label{eq:equivalence-of-pairs}
(M \smallsetminus z , D) \;\simeq\; ( M \vee W_k , * ),
\end{equation}
as follows (see Figure \ref{fig:decomposition} for an illustration). Choose a $d$-dimensional closed disc $B$ in $M$ containing the configuration $z$ in its interior and such that $B \cap \partial M$ is a $(d-1)$-dimensional disc in $\partial M$ containing (but not equal to) $D$, and such that the closure of the complement $(B \cap \partial M) \smallsetminus D$ is also a disc. (In Figure \ref{fig:decomposition}, we may assume that $D = \partial M \cap B'$.) Note that the closure $M'$ of $M \smallsetminus B$ in $M$ is also homeomorphic to $M$. Also note that we have $M' \cap (B \cap \partial M) = \partial (B \cap \partial M)$ and $D \cap \partial (B \cap \partial M) \neq \varnothing$ by the condition that the closure of the complement $(B \cap \partial M) \smallsetminus D$ is a disc. Thus $D \cap M' \neq \varnothing$, so we may choose a basepoint $*$ of $M$ in $D \cap M'$. Choose also $k$ embedded $(d-1)$-spheres in $B$ such that each sphere intersects $\partial B$ at the basepoint $*$ and nowhere else, the spheres are pairwise disjoint except for $*$ and each sphere ``wraps once around each of the points of $z$'' (this is more formally expressed by the condition that $B \smallsetminus z$ must deformation retract onto the union of the spheres). The union of $M'$ and the spheres is homeomorphic to the wedge sum on the right-hand side of \eqref{eq:equivalence-of-pairs}, and there is a deformation retraction of $M \smallsetminus z$ onto this subspace, supported in $B \smallsetminus z$, fixing the basepoint $*$ and sending $D$ onto $\{*\}$.
\end{construction}

\begin{figure}[t]
\centering
\begin{tikzpicture}
[x=1mm,y=1mm]
\draw[black!30,line width=2mm] (11,11) to[out=20,in=180] (25,14) .. controls (35,14) and (40,20) .. (40,30) arc (180:0:10) -- (60,17) arc (0:-90:4) -- (50,13) .. controls (35,13) and (35,4) .. (30,4) .. controls (25,4) and (25,6) .. (20,6) to[out=180,in=-20] (11,9);
\node at (59.5,20.5) [anchor=east,font=\small,black!80] {$T$};
\fill[blue!10] (20,15) -- (20,50) -- (10,50) arc (90:180:10) -- (0,15) -- cycle;
\fill[blue!20] (20,0) -- (20,15) -- (0,15) -- (0,10) arc (180:270:10) -- cycle;
\draw[blue!50] (0,15) -- (20,15);
\fill[green!20,opacity=0.5] (20,0) -- (70,0) arc (-90:0:10) -- (80,40) arc (0:90:10) -- (20,50) -- cycle;
\draw (20,0) -- (70,0) arc (-90:0:10) -- (80,40) arc (0:90:10) -- (10,50) arc (90:180:10) -- (0,10) arc (180:270:10) -- cycle;
\draw (20,0) -- (20,50);
\node at (10,10) [draw,circle,inner sep=0.5mm,fill=white] {};
\node at (10,20) [draw,circle,inner sep=0.5mm,fill=white] {};
\node at (10,30) [draw,circle,inner sep=0.5mm,fill=white] {};
\node at (10,40) [draw,circle,inner sep=0.5mm,fill=white] {};
\draw (20,0) .. controls (20,5) and (19,5) .. (19,10) -- (19,40) arc (0:90:3) -- (10,43) arc (90:270:3) -- (15,37) arc (90:0:3) -- (18,10) .. controls (18,5) and (20,5) .. (20,0);
\draw (20,0) .. controls (20,5) and (17,5) .. (17,10) -- (17,30) arc (0:90:3) -- (10,33) arc (90:270:3) -- (13,27) arc (90:0:3) -- (16,10) .. controls (16,5) and (20,5) .. (20,0);
\draw (20,0) .. controls (20,5) and (15,5) .. (15,10) -- (15,20) arc (0:90:3) -- (10,23) arc (90:270:3) -- (11,17) arc (90:0:3) -- (14,10) .. controls (14,5) and (20,5) .. (20,0);
\draw (20,0) .. controls (20,5) and (13,5) .. (13,10) arc (0:270:3) .. controls (13,7) and (20,3) .. (20,0);
\node (b) at (10,-9) [fill=blue!10,inner sep=2mm,draw=black] {};
\node (bb) at (40,-9) [fill=blue!20,inner sep=2mm,draw=black] {};
\node (g) at (65,-9) [fill=green!10,inner sep=2mm,draw=black] {};
\node (b1) at (b.east) [anchor=west] {$\cup$};
\node (b2) at (b1.east) [anchor=west,fill=blue!20,inner sep=2mm,draw=black] {};
\node (b3) at (b2.east) [anchor=west] {$=B$};
\node at (bb.east) [anchor=west] {$= B'$};
\node at (g.east) [anchor=west] {$= M'$};
\node (basepoint) at (20,0) [fill=black,inner sep=0.5mm] {};
\node at (basepoint.south) [anchor=north] {$*$};
\node at (80,25) [anchor=west] {$=M$};
\node at (0,25) [anchor=east] {$\phantom{M=}$};
\draw[green!50!black] (11,11) to[out=20,in=180] (25,14) .. controls (35,14) and (40,20) .. (40,30) arc (180:0:10) -- (60,17) arc (0:-90:4) -- (50,13) .. controls (35,13) and (35,4) .. (30,4) .. controls (25,4) and (25,6) .. (20,6) to[out=180,in=-20] (11,9);
\draw[->,green!50!black] (60,27);
\node at (60.5,18.5) [anchor=west,font=\small,green!50!black] {$\delta$};
\end{tikzpicture}
\caption{An embedding of $M \vee ( \textstyle\bigvee^k S^{d-1} )$ into $M \smallsetminus z$ as a deformation retract, together with a loop $\delta$ in $B' \cup M'$ based at $z \cap B'$ and a tubular neighbourhood $T$ of its intersection with $M'$. The disc $D \subseteq \partial M$ is the intersection $\partial M \cap B'$.}
\label{fig:decomposition}
\end{figure}
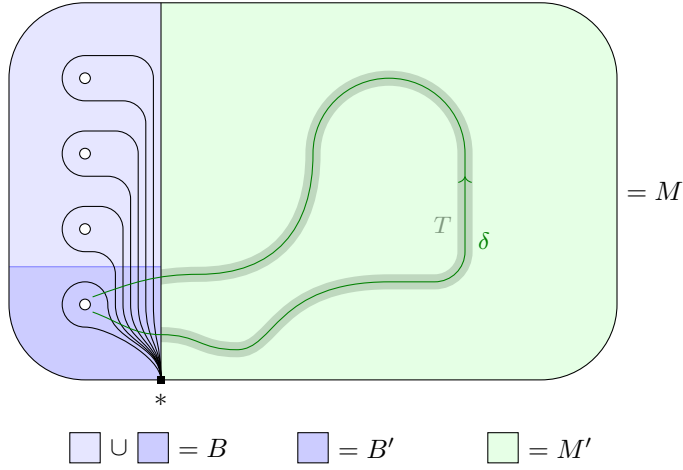

\begin{notation}
From now on, we will write $\pi_1(C_k(\mathring{M}),z)$ just as $\pi_1(C_k(M))$, leaving the basepoint $z$ implicit, and using the fact that the inclusion $C_k(\mathring{M}) \hookrightarrow C_k(M)$ is a homotopy equivalence.
\end{notation}

\begin{notation}
By the smooth version of the point-pushing action (see Remark \ref{rmk:smooth-point-pushing}), an element $\gamma \in \pi_1(C_k(M))$ induces (an isotopy class of) a self-diffeomorphism $\mathrm{push}_{\gamma}^{\mathsf{sm}} \colon M \to M$, fixing $\partial M$ pointwise and $z$ setwise, which has an explicit geometric representative $\varphi_{(T_1,\ldots,T_j)}$ given by Lemma \ref{l:geometric-point-pushing} if $\mathrm{dim}(M) \geq 3$. We denote its restriction to a self-diffeomorphism of $M \smallsetminus z$ by
\[
\pi_\gamma \colon M \smallsetminus z \longrightarrow M \smallsetminus z.
\]
By abuse of notation, we also denote by $\pi_\gamma$ the (homotopy class of a) homotopy self-equivalence of $M \vee W_k$ fixing $*$ induced via the deformation retraction \eqref{construction:equivalence-of-pairs}:
\begin{equation}
\label{eq:pi-gamma}
\centering
\begin{split}
\begin{tikzpicture}
[x=1mm,y=1mm]
\node (tl) at (0,15) {$M \smallsetminus z$};
\node (tr) at (40,15) {$M \smallsetminus z$};
\node (bl) at (0,0) {$M \vee W_k$};
\node (br) at (40,0) {$M \vee W_k .$};
\draw[->] (tl) to node[above,font=\small]{$\pi_\gamma$} (tr);
\draw[->] (bl) to node[above,font=\small]{$\pi_\gamma$} (br);
\draw[->] (bl) to node[right,font=\small]{$\simeq$} node[left,font=\small]{$\mathrm{incl}$} (tl);
\draw[->] (tr) to node[left,font=\small]{$\simeq$} node[right,font=\small]{\eqref{construction:equivalence-of-pairs}} (br);
\end{tikzpicture}
\end{split}
\end{equation}
\end{notation}

\phantomsection
\label{description-of-braid-decomposition}
Recall that, for $\mathrm{dim}(M) \geq 3$, the fundamental group $\pi_1(C_k(M))$ decomposes as the semi-direct product $\pi_1(M)^k \rtimes \Sigma_k$. (See \cite[Theorem~9]{FadellNeuwirth1962Configurationspaces}, \cite[Theorem~1]{Birman1969Onbraidgroups}, or \cite[Lemma 4.1]{Tillmann2016Homologystabilitysymmetric} for a generalisation.) Concretely, the isomorphism 
\begin{equation}
\label{equation:wreath-product}
\Upsilon \colon \pi_1(C_k(M),z) \cong \pi_1(M,z_0)^k \rtimes \Sigma_k
\end{equation}
is given as follows, and depends on the choice of a contractible ball $B$ containing the base configuration $z$ and the point $z_0$. Any loop $\gamma$ of $k$-point configurations in $M$ based at $z$ consists of an ordered tuple of paths in $M$ given by the motions of the individual points. (The paths are ordered according to their ordering at time $t=0$, and this is determined by a fixed ordering of the base configuration $z$.) If we collapse $B$ to a point, we obtain a $k$-tuple $(\alpha_1,\ldots,\alpha_k)$ of based loops in $M/B$. Together with the permutation $\sigma(\gamma)$ of $z \cong \{1,\ldots,k\}$ induced by $\gamma$, and using the isomorphism $\pi_1(M) \cong \pi_1(M/B)$ induced by the collapse map $M \to M/B$, this determines an element $\Upsilon(\gamma) = (\alpha_1,\ldots,\alpha_k;\sigma(\gamma))$ of the semidirect product $\pi_1(M,z_0)^k \rtimes \Sigma_k$.

In the next two sections we give explicit formulas for the bottom horizontal map of \eqref{eq:pi-gamma} for $\gamma = (\alpha_1,\ldots,\alpha_k;\sigma) \in \pi_1(M)^k \rtimes \Sigma_k$ under this decomposition.

\begin{notation}
\label{notation}
We collect here some additional notation that will be used in the following two sections.

\begin{itemizeb}
\item For a wedge $A \vee B$, we write $\mathrm{inc}_A$ (resp.\ $\mathrm{inc}_B$) for the inclusion of the first (resp.\ second) summand, and similarly we write $\mathrm{pr}_A$ (resp.\ $\mathrm{pr}_B$) for the projection onto the first (resp.\ second) summand.
\item For pointed spaces $A,B,C$ and a pointed map $f \colon A \vee B \to C$, we will sometimes write $f$ as a $(1 \times 2)$-matrix:
\[
f = \left( \begin{array}{c|c}
f_A & f_B
\end{array} \right) ,
\]
where $f_A = f \circ \mathrm{inc}_A$ and $f_B = f \circ \mathrm{inc}_B$. Note that $f_A$ and $f_B$ jointly determine $f$, since $\vee$ is the coproduct in the category of pointed spaces.
\item Similarly, for pointed spaces $A,B,C,D$ and a pointed map $f \colon A \vee B \to C \vee D$, we will sometimes write $f$ as a $(2 \times 2)$-matrix:
\[
f = \left( \begin{array}{c|c}
f_A & f_B
\end{array} \right) \;\rightsquigarrow\; \left( \begin{array}{c|c}
{}_C f_A & {}_C f_B \\ \hline
{}_D f_A & {}_D f_B
\end{array} \right) ,
\]
where ${}_C f_A = \mathrm{pr}_C \circ f \circ \mathrm{inc}_A$, etc. Note that the pair of ${}_C f_A$ and ${}_D f_A$ does \emph{not} determine $f_A$ (since $\vee$ is not a product), so the $(2 \times 2)$-matrix-notation loses information. (This is why we write ``$\rightsquigarrow$'' instead of ``$=$'' in this case.)
\item As mentioned above, we have for $\mathrm{dim}(M) \geq 3$ a splitting $\pi_1(C_k(M)) \cong \pi_1(M)^k \rtimes \Sigma_k$. Thus, for each $\sigma \in \Sigma_k$ and $\alpha \in \pi_1(M)$, we have elements
\[
(1,\ldots,1;\sigma) \text{ and } (\alpha,1,\ldots,1;\mathrm{id}) \in \pi_1(C_k(M)),
\]
which we will denote simply by $\sigma$ and $\alpha$ by abuse of notation. We will always use these letters for elements of these two subgroups of $\pi_1(C_k(M))$, and we will denote a general element of $\pi_1(C_k(M))$ by $\gamma$.
\item We take the basepoint of $S^{d-1}$ to be the south pole, and write
\[
\mathrm{pinch} \colon S^{d-1} \longrightarrow S^{d-1} \vee S^{d-1}
\]
for the map that collapses the equator of $S^{d-1}$ to a point. The wedge sum on the right-hand side identifies the north pole of the left summand with the south pole of the right summand. We take the basepoint of $S^{d-1} \vee S^{d-1}$ to be the south pole of the left summand (in particular, \emph{not} the point at which the wedge sum is taken); with this choice, $\mathrm{pinch}$ is a based map.
\item We write
\[
\mathrm{coll} \colon S^{d-1} \longrightarrow [0,1]
\]
for the ``collapse'' map that projects $S^{d-1} \subset \bR^d$ onto the $d$-th coordinate (so the south pole goes to $-1$ and the north pole goes to $1$) and then linearly reparametrises by $x \mapsto \frac12 (x+1)$.
\end{itemizeb}
\end{notation}

\begin{rmk}
Since $\pi_1(C_k(M))$ is generated by elements of the form $(1,\ldots,1;\sigma)$ and $(\alpha,1,\ldots,1;\mathrm{id})$ (which we henceforth denote simply by $\sigma$ and $\alpha$) for $\sigma \in \Sigma_k$ and $\alpha \in \pi_1(M)$, it will suffice to give explicit formulas for
\[
\pi_\sigma \text{ and } \pi_\alpha \colon M \vee W_k \longrightarrow M \vee W_k
\]
up to basepoint-preserving homotopy, for all $\sigma \in \Sigma_k$ and $\alpha \in \pi_1(M)$. This will be done in sections \ref{s:symmetric} and \ref{s:loop} respectively.
\end{rmk}

\begin{terminology}
The elements $\sigma = (1,\ldots,1;\sigma)$ will be called \emph{symmetric generators} of $\pi_1(C_k(M))$ and the elements $\alpha = (\alpha,1,\ldots,1;\mathrm{id})$ will be called \emph{loop generators} of $\pi_1(C_k(M))$.
\end{terminology}

%%%%%%%%%%%%%%%%%%%%%%%%%%%%%%%%%%%%%%%%%%%
%%%%%%%%%%%%%%%%%%%%%%%%%%%%%%%%%%%%%%%%%%%
\section{Symmetric generators}\label{s:symmetric}
%%%%%%%%%%%%%%%%%%%%%%%%%%%%%%%%%%%%%%%%%%%
%%%%%%%%%%%%%%%%%%%%%%%%%%%%%%%%%%%%%%%%%%%

The action of the \emph{symmetric generators} of $\pi_1(C_k(M))$ on $M \vee W_k$ is fairly easy to describe.

\begin{prop}
\label{p:formula-sigma}
For any element $\sigma \in \Sigma_k$ we have
\begin{equation}
\label{eq:formula-sigma}
\pi_\sigma = \mathrm{id}_M \vee \sigma_\sharp = \left( \begin{array}{c|c}
\mathrm{inc}_M & \mathrm{inc}_{W_k} \circ \sigma_\sharp
\end{array} \right) \;\rightsquigarrow\; \left( \begin{array}{c|c}
\mathrm{id}_M & * \\ \hline
* & \sigma_\sharp
\end{array} \right) ,
\end{equation}
where $\sigma_\sharp$ denotes the obvious self-map of $W_k = \bigvee^k S^{d-1}$ determined by the permutation $\sigma$, and $*$ denotes the constant map to the basepoint.
\end{prop}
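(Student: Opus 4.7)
The plan is to represent $\sigma \in \Sigma_k \subset \pi_1(C_k(M))$ by a loop of configurations supported in the ball $B$ of Construction \ref{construction:equivalence-of-pairs}, invoke the explicit geometric form of the point-pushing diffeomorphism given by Lemma \ref{l:geometric-point-pushing}, and then compute the induced self-map on each summand of the wedge decomposition $M \vee W_k$. First I would observe that the splitting $\Sigma_k \hookrightarrow \pi_1(C_k(M))$ distinguished by the wreath-product decomposition \eqref{equation:wreath-product} may be chosen to factor through $\pi_1(C_k(B)) \to \pi_1(C_k(M))$: any loop $\gamma$ of configurations supported in $B$ has each of its strands contained in the contractible set $B$, so under $\Upsilon$ it is sent to $(1,\ldots,1;\sigma(\gamma))$. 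Surjectivity of the braid-to-permutation map $\pi_1(C_k(B)) \twoheadrightarrow \Sigma_k$ (standard for configurations in a disc) then lets us choose $\gamma \subset B$ representing $\sigma$.

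Applying Lemma \ref{l:geometric-point-pushing} to this $\gamma$, I can take the embedded loops $\gamma_1,\ldots,\gamma_j$ and their tubular neighbourhoods $T_1,\ldots,T_j$ all to lie in $B$ (one $\gamma_i$ per cycle of $\sigma$, with $T_i$ containing the corresponding points of $z$). Each $\gamma_i$ is null-homotopic in $B$ and hence lifts to the orientation double cover of $M$, so the non-reflecting local model applies and $\varphi \coloneqq \varphi_{(T_1,\ldots,T_j)}$ is an orientation-preserving self-diffeomorphism of $M$ that is the identity outside $B$ and permutes $z$ by $\sigma$. In particular, $\varphi$ fixes $M' \subset M \smallsetminus z$ pointwise, so via the retraction of Construction \ref{construction:equivalence-of-pairs} the restriction of $\pi_\sigma$ to the $M$-summand of $M \vee W_k$ is exactly $\mathrm{inc}_M$; equivalently, the matrix entries satisfy ${}_M \pi_{\sigma,M} = \mathrm{id}_M$ and ${}_{W_k} \pi_{\sigma,M} = *$.

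For the $W_k$-summand, $\varphi$ sends each based sphere $S_i \subset B \smallsetminus z$ (wrapping around $z_i$) to a based $(d-1)$-sphere $\varphi(S_i) \subset B \smallsetminus z$ wrapping around $z_{\sigma(i)}$. Because $d \geq 3$, the complement $B \smallsetminus z$ is simply-connected and Hurewicz gives a canonical isomorphism $\pi_{d-1}(B \smallsetminus z,*) \cong H_{d-1}(B \smallsetminus z) \cong \bZ^k$, with the $i$-th basis vector represented by $[S_i]$. Since $\varphi$ is orientation-preserving and sends $z_i$ to $z_{\sigma(i)}$, it sends $[S_i]$ to $[S_{\sigma(i)}]$, so $\varphi(S_i) \simeq S_{\sigma(i)}$ as based maps. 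Composing with the deformation retraction onto $W_k$ gives $\pi_\sigma|_{W_k} = \mathrm{inc}_{W_k} \circ \sigma_\sharp$, which together with the previous paragraph yields \eqref{eq:formula-sigma}. The one step that requires care is this final degree computation: I rely on simple-connectedness of $B \smallsetminus z$ (using $d \geq 3$) to reduce the based homotopy class of $\varphi(S_i)$ to a homology class, and on the non-reflecting local model to guarantee the sign is $+1$ rather than $-1$.
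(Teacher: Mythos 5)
Your proposal is correct and follows essentially the same line of argument as the paper's proof: represent $\sigma$ by a loop of configurations supported in the ball $B$ (justified via the explicit description of the wreath-product isomorphism), invoke the geometric model of Lemma~\ref{l:geometric-point-pushing} with tubular neighbourhoods inside $B$, deduce the identity on the $M$-summand from $\varphi|_{M'} = \mathrm{id}$, and identify the action on the wedge of spheres by passing to $H_{d-1}$ and using the Hurewicz theorem. The only cosmetic difference is that you carry out the Hurewicz/homology comparison in $B \smallsetminus z$ directly rather than first isolating an automorphism $\psi$ of $W_k$ as the paper does; since $B\smallsetminus z \simeq W_k$ this is the same computation.
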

\begin{proof}
In the geometric model $\varphi_{(T_1,\ldots,T_j)}$ (see Lemma \ref{l:geometric-point-pushing}) for the point-pushing diffeomorphism of $(M,z)$ induced by $\gamma = (1,\ldots,1;\sigma)$, we may assume that the tubular neighbourhoods $T_1,\ldots,T_j$ are all contained in the codimension-zero ball $B \subset M$ (see Figure \ref{fig:decomposition}). This follows from the concrete description of the isomorphism $\eqref{equation:wreath-product} \colon \pi_1(C_k(M)) \cong \pi_1(M)^k \rtimes \Sigma_k$ given on page \pageref{description-of-braid-decomposition}. Since $\varphi_{(T_1,\ldots,T_j)}$ is the identity outside of the tubular neighbourhoods, this implies that $\pi_\sigma \simeq \mathrm{id}_M \vee \psi$, for some automorphism $\psi$ of $W_k$.

To see that $\psi \simeq \sigma_\sharp$, first consider a collection of $k$ small, unbased $(d-1)$-spheres surrounding the points of $z$, contained in the union of tubular neighbourhoods $T_1 \cup \cdots \cup T_j$. It follows from its explicit description in Lemma \ref{l:geometric-point-pushing} that $\varphi_{(T_1,\ldots,T_j)}$ permutes the homotopy classes of these spheres according to $\sigma$. Since these spheres form a free basis for the the homology group $H_{d-1}(B \smallsetminus z) \cong \bZ^k$, the effect of $\varphi_{(T_1,\ldots,T_j)}$ on $H_{d-1}(M\smallsetminus z) \cong H_{d-1}(B\smallsetminus z) \oplus H_{d-1}(M')$ is to permute the $k$ different $\bZ$ factors of $H_{d-1}(B\smallsetminus z)$ according to $\sigma$. Identifying $W_k$ with the wedge of embedded $(d-1)$-spheres in Figure \ref{fig:decomposition}, we have a canonical isomorphism $H_{d-1}(W_k) \cong H_{d-1}(B\smallsetminus z) \cong \bZ^k$. It follows that the effect of $\psi$ on $H_{d-1}(W_k)$ is to permute the $k$ factors of $H_{d-1}(W_k) \cong \bZ^k$ according to $\sigma$. By the Hurewicz theorem, we have $H_{d-1}(W_k) \cong \pi_{d-1}(W_k)$. Since $W_k$ is a wedge of spheres, $\psi \colon W_k \to W_k$ is determined up to based homotopy by its effect on $\pi_{d-1}(W_k)$; thus $\psi \simeq \sigma_\sharp$.
\end{proof}

%%%%%%%%%%%%%%%%%%%%%%%%%%%%%%%%%%%%%%%%%%%
%%%%%%%%%%%%%%%%%%%%%%%%%%%%%%%%%%%%%%%%%%%
\section{Loop generators}\label{s:loop}
%%%%%%%%%%%%%%%%%%%%%%%%%%%%%%%%%%%%%%%%%%%
%%%%%%%%%%%%%%%%%%%%%%%%%%%%%%%%%%%%%%%%%%%

For any $\alpha \in \pi_1(M,*)$, the point-pushing map $\pi_\alpha \colon M \smallsetminus z \to M \smallsetminus z$ may be assumed (up to basepoint-preserving homotopy) to be supported in a tubular neighbourhood of an embedded loop $\alpha'$ in $M$, based at one of the points of the configuration $z$, in the homotopy class determined by conjugating $\alpha$ with a path in $B$ from $*$ to this point (see Figure \ref{fig:decomposition}). We may choose $\alpha'$ and its tubular neighbourhood $T$ to be contained in $M' \cup B'$, so the support of $\pi_\alpha \colon M \smallsetminus z \to M \smallsetminus z$ is contained in $M' \cup B'$. Under the identification \eqref{eq:equivalence-of-pairs}, this implies the following.

\begin{lem}
\label{l:reduction-to-one-sphere}
For any $\alpha \in \pi_1(M)$, up to based homotopy, $\pi_\alpha \colon M \vee W_k \to M \vee W_k$ is of the form
\[
\pi_\alpha = \bar{\pi}_\alpha \vee \mathrm{id}_{W_{k-1}},
\]
where $\bar{\pi}_\alpha$ is a self-map of $M \vee S^{d-1}$, unique up to based homotopy.
\end{lem}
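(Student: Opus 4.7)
The plan is to start from the geometric picture outlined at the beginning of this section (together with Lemma \ref{l:geometric-point-pushing}) and to adapt the embedding of $M \vee W_k$ into $M \smallsetminus z$ from Construction \ref{construction:equivalence-of-pairs} accordingly. First I would represent $\pi_\alpha$ by a diffeomorphism $\varphi_T$ of $M$ supported in a tubular neighbourhood $T$ of an embedded loop $\alpha'$ based at one of the points of $z$, say $z_1$. Since $d \geq 3$, general position arguments let me arrange that $\alpha'$, and hence a sufficiently thin tubular neighbourhood $T$, is disjoint from the remaining configuration points $z_2,\ldots,z_k$.

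Next I would refine the choice of embedded $(d-1)$-spheres $S_1,\ldots,S_k$ from Construction \ref{construction:equivalence-of-pairs} so that $S_2,\ldots,S_k$ are small spheres around $z_2,\ldots,z_k$ that are disjoint from $T$. This provides an identification
\[
M \vee W_k \;=\; (M \vee S_1) \vee (S_2 \vee \cdots \vee S_k) \;=\; (M \vee S^{d-1}) \vee W_{k-1},
\]
together with a deformation retraction $r \colon M \smallsetminus z \to M \vee W_k$ which is the identity on $M' \cup S_1 \cup \cdots \cup S_k$.

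The argument then consists of two geometric observations. First, since $\varphi_T$ is the identity pointwise on $S_2 \cup \cdots \cup S_k$, the composition defining $\pi_\alpha$ restricts on $W_{k-1}$ to the inclusion $W_{k-1} \hookrightarrow M \vee W_k$. Second, since $\varphi_T(M' \cup S_1) \subseteq M' \cup T$, and this set is disjoint from $S_2 \cup \cdots \cup S_k$ apart from the common basepoint $*$, and since $r$ fixes $M'$ pointwise and retracts points of $B \smallsetminus z$ onto the nearest sphere, the image $r(\varphi_T(M' \cup S_1))$ lies inside $M' \cup S_1 = M \vee S^{d-1}$. Hence $\pi_\alpha$ restricts on $M \vee S^{d-1}$ to a map $\bar\pi_\alpha \colon M \vee S^{d-1} \to M \vee S^{d-1}$.

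Combining these two observations via the universal property of the wedge sum (the coproduct in based spaces) assembles $\pi_\alpha$, up to based homotopy, as $\bar\pi_\alpha \vee \mathrm{id}_{W_{k-1}}$, and the same universal property yields uniqueness of $\bar\pi_\alpha$ up to based homotopy. The hard part will be the second observation: one needs to choose both $T$ (so that it avoids $S_2,\ldots,S_k$) and the deformation retract $r$ (so that it pushes points in each ``sector'' of $B \smallsetminus z$ onto the appropriate single sphere, rather than mixing them) carefully enough that the image of $M \vee S^{d-1}$ under $\pi_\alpha$ does not spill into the $W_{k-1}$ summand.
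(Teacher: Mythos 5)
Your proof is correct and follows essentially the same geometric approach as the paper: the tubular neighbourhood $T$ of the embedded loop based at $z \cap B'$ can be arranged to lie in $M' \cup B'$ and to avoid the spheres $S_2,\ldots,S_k$, so $\varphi_T$ fixes $W_{k-1}$ and preserves (after retracting) the summand $M \vee S^{d-1}$. One small imprecision in your second observation: $\varphi_T(M' \cup S_1) \subseteq M' \cup S_1 \cup T$ rather than $M' \cup T$ (since $S_1 \not\subseteq T$, as $S_1$ passes through $*$ but $T$ does not), though this set still avoids $S_2 \cup \cdots \cup S_k$ away from $*$, so the conclusion is unaffected.
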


We therefore just have to describe the map $\bar{\pi}_\alpha$ for each $\alpha \in \pi_1(M)$. We first do this under an additional assumption on the manifold $M$. Recall that the \emph{handle-dimension} of a manifold is the smallest $i$ such that $M$ may be constructed using handles of degree at most $i$. Using the cores of such a handle decomposition, this implies that $M$ deformation retracts onto an embedded CW-complex of dimension equal to the handle dimension of $M$. Since $M$, in our situation, is connected and has non-empty boundary, its handle-dimension is necessarily at most $\mathrm{dim}(M) - 1$.

\begin{prop}
\label{p:formula-alpha-1}
Suppose that the handle dimension of $M$ is at most $\mathrm{dim}(M)-2$. Then, for any element $\alpha \in \pi_1(M)$ we have
\begin{equation}
\label{eq:formula-alpha-1}
\bar{\pi}_\alpha = \left( \begin{array}{c|c}
\mathrm{inc}_M & ((\alpha \circ \mathrm{coll}) \vee \mathrm{sgn}(\alpha)) \circ \mathrm{pinch}
\end{array} \right) \;\rightsquigarrow\; \left( \begin{array}{c|c}
\mathrm{id}_M & \alpha \circ \mathrm{coll} \simeq * \\ \hline
* & \mathrm{sgn}(\alpha)
\end{array} \right) ,
\end{equation}
where $\mathrm{sgn}(\alpha) \colon S^{d-1} \to S^{d-1}$ has degree $+1$ if $\alpha$ lifts to a loop in the orientation double cover of $M$ and degree $-1$ otherwise. The other notation is explained in Notation \ref{notation}.
\end{prop}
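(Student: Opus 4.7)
By Lemma~\ref{l:reduction-to-one-sphere}, it suffices to compute, up to based homotopy, the two columns of the $(1 \times 2)$-matrix representing $\bar\pi_\alpha \colon M \vee S^{d-1} \to M \vee S^{d-1}$. Throughout, I will represent $\pi_\alpha$ by the explicit Dehn-twist-type diffeomorphism $\varphi_T$ of Lemma~\ref{l:geometric-point-pushing}, where $T$ is a closed tubular neighbourhood of a smoothly embedded representative $\alpha'$ of $\alpha$ (obtained by conjugating $\alpha$ with a path in $B$) based at the relevant point of $z$. The two entries will be verified independently.

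\textbf{The $M$-entry.} This is where the handle-dimension hypothesis enters decisively. Pick an embedded CW-subcomplex $K \subset \mathring M$ of dimension at most $d-2$ onto which $M$ deformation retracts. Since $\dim K + \dim \alpha' \leq (d-2) + 1 < d$, general position lets us isotope $\alpha'$ (rel its basepoint in $z$) so that $\alpha' \cap K = \varnothing$, and by shrinking the tubular radius we may further arrange $T \cap K = \varnothing$. Then $\varphi_T$ is the identity on $K$, and composing the deformation retraction $M \simeq K$ with the inclusion $K \hookrightarrow M \hookrightarrow M \vee S^{d-1}$ exhibits a based homotopy $\bar\pi_\alpha \circ \mathrm{inc}_M \simeq \mathrm{inc}_M$, which is the left-hand entry.

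\textbf{The $S^{d-1}$-entry.} This is the heart of the proof and the step I expect to be the main obstacle. The plan is to parameterise the embedded sphere $S^{d-1} \subset B$ of Construction~\ref{construction:equivalence-of-pairs} in a way compatible with the tube $T$, so that the effect of $\varphi_T$ can be read off explicitly and reassembled via $\mathrm{pinch}$. Concretely, arrange the sphere so that near the south pole it consists of an arc shadowing the initial segment of $\alpha'$ thickened by a transverse $(d-2)$-disc, while near $z$ it coincides with the unit sphere in a slice $D^{d-1} \times \{t_0\}$ of $T$. As $\varphi_T$ transports points once around the core of $T$: the transverse-disc/arc piece near the south pole is carried along the entire loop $\alpha'$, contributing a map $S^{d-1} \to M$ based-homotopic to $\alpha \circ \mathrm{coll}$; whereas the ``body'' of the sphere near $z$ is brought back to the original sphere in $B$ by either the identity or the reflection $r$ from the geometric model of Lemma~\ref{l:geometric-point-pushing}, i.e.\ by a self-map of $S^{d-1}$ of degree $+1$ or $-1$ depending on whether $\alpha$ lifts to a loop in the orientation double cover — precisely $\mathrm{sgn}(\alpha)$. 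Pinching off a small disc neighbourhood of the south pole from the rest of $S^{d-1}$ geometrically separates these two contributions and, after a routine homotopy of the reparametrising factors $\mathrm{coll}$ and $\mathrm{sgn}(\alpha)$ to the standard ones of Notation~\ref{notation}, assembles them into $((\alpha \circ \mathrm{coll}) \vee \mathrm{sgn}(\alpha)) \circ \mathrm{pinch}$. The delicate point is verifying that the two identifications match along the shared equator of the pinch so that the resulting based map is globally homotopic to $\bar\pi_\alpha|_{S^{d-1}}$; all other parts of the argument are straightforward consequences of the explicit model for $\varphi_T$.
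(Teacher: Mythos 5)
Your proof is correct. The $S^{d-1}$-entry argument is essentially the same geometric reasoning as the paper's (the paper describes it as replacing the based sphere by a sphere encircling the puncture together with a ``tether'' to the basepoint, which is exactly your pinch decomposition into a transported arc and a degree-$\pm 1$ wrap). The $M$-entry is where you diverge, and your route is genuinely different and arguably cleaner: you use the dimension count $\dim K + 1 < d$ and general position to choose an embedded representative of $\alpha$ and a tube $T$ entirely disjoint from the CW-spine $K$, so that the geometric point-pushing diffeomorphism $\varphi_T$ is \emph{literally} the identity on $K$, and then push this to all of $M$ via the deformation retraction $M \simeq K$. The paper instead keeps $\alpha'$ and $K$ general, proves the restriction $\bar{\pi}^M_\alpha|_K$ factors through $\mathrm{inc}_M$ by a cellular-approximation argument (an $(\leq d-2)$-cell cannot hit the interior of the single $(d-1)$-cell of $S^{d-1}$), and then separately shows the resulting self-map $\theta_\alpha$ of $M$ is homotopic to the identity via a three-sides-of-a-square diagram chase. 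Your approach collapses these two steps into one geometric observation, at the cost of invoking transversality and the freedom in Lemma \ref{l:geometric-point-pushing} to choose the embedded representative; what the paper's cellular argument buys is that it is purely homotopy-theoretic and does not require re-choosing the loop, which is the form that adapts (as noted in Remark \ref{rmk:half-of-second-proposition}) when generalising to handle dimension $d-1$ in Proposition \ref{p:formula-alpha-2}. One small point to tidy in your write-up: for the homotopy $\bar{\pi}_\alpha \circ \mathrm{inc}_M \simeq \mathrm{inc}_M$ to be \emph{based}, note that the two maps agree not only on $K$ but on all of $M \smallsetminus T$, in particular on a collar of $\partial M$ containing the basepoint, so the based homotopy follows from the cofibration/weak-equivalence argument applied to $K$ union a collar rather than to $K$ alone (which misses the basepoint on $\partial M$).
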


If the handle dimension of $M$ is equal to $\mathrm{dim}(M) - 1$ (the maximum possible), the formula for $\bar{\pi}_\alpha$ is more complicated. The following proposition gives the general formula.

\begin{prop}
\label{p:formula-alpha-2}
For any element $\alpha \in \pi_1(M)$ we have
\begin{equation}
\label{eq:formula-alpha-2}
\bar{\pi}_\alpha = \left( \begin{array}{c|c}
\overline{\pitchfork}_\alpha & ((\alpha \circ \mathrm{coll}) \vee \mathrm{sgn}(\alpha)) \circ \mathrm{pinch}
\end{array} \right) \;\rightsquigarrow\; \left( \begin{array}{c|c}
\mathrm{id}_M & \alpha \circ \mathrm{coll} \simeq * \\ \hline
\pitchfork_\alpha & \mathrm{sgn}(\alpha)
\end{array} \right) ,
\end{equation}
where $\mathrm{sgn}(\alpha)$ is as in Proposition \ref{p:formula-alpha-1} and the maps $\overline{\pitchfork}_\alpha$ and $\pitchfork_\alpha$ are described in \S\ref{subs:loopII} below.
\end{prop}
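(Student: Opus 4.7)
The plan is to extend the proof of Proposition \ref{p:formula-alpha-1} to general $M$ by carefully accounting for the contribution of the top-dimensional cells of a spine of $M$. By Lemma \ref{l:reduction-to-one-sphere} it suffices to identify $\bar\pi_\alpha \colon M \vee S^{d-1} \to M \vee S^{d-1}$ up to based homotopy, where the sphere summand corresponds to a small sphere around the single point of $z$ on which $\alpha'$ is based. Using Lemma \ref{l:geometric-point-pushing} we fix an explicit geometric representative $\varphi_T$ of $\pi_\alpha$, supported in a tubular neighbourhood $T$ of an embedded loop $\alpha' \subset M' \cup B'$ in the conjugacy class of $\alpha$ (\cf Figure \ref{fig:decomposition}). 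We then choose an embedded CW-complex $K \subseteq M'$ of dimension at most $d-1$ onto which $M$ deformation retracts, and, using transversality (available since $d \geq 3$), we isotope $\alpha'$ so that it meets the $(d-1)$-skeleton $K^{(d-1)}$ transversely in finitely many signed intersection points and so that $K \cap T$ is a disjoint union of meridional $(d-1)$-discs, one per intersection point.

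The right-hand column of the matrix \eqref{eq:formula-alpha-2} is then handled exactly as in the proof of Proposition \ref{p:formula-alpha-1}: the wedge sphere summand sits entirely inside $B'$, away from the cells of $K$, and its image under $\varphi_T$ is controlled purely by the local behaviour of $\varphi_T$ near the core of $T$. The pinch presentation decomposes this image into the composition $\alpha \circ \mathrm{coll}$ into $M$ (top-right) and a degree-$\mathrm{sgn}(\alpha)$ self-map of $S^{d-1}$ (bottom-right), where the sign records whether $\alpha$ lifts to the orientation double cover. The left-hand column is where the new content appears. Cells of $K$ disjoint from $T$ are pointwise fixed by $\varphi_T$ and so contribute only the identity to $M$. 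A $(d-1)$-cell $\tau$ meeting $T$ in a meridional disc $D_\tau$ is twisted around the core of $T$ according to the explicit local formula $(x,t) \mapsto (x, t + \lambda(|x|))$ of Lemma \ref{l:geometric-point-pushing}; composing this twist with the deformation retraction of $M \smallsetminus z$ onto $M \vee S^{d-1}$ produces, on $\tau$, both an $M$-valued component (assembling into $\overline{\pitchfork}_\alpha$) and an $S^{d-1}$-valued component (assembling into $\pitchfork_\alpha$). Summing these contributions in the cyclic order in which $\alpha'$ meets $K^{(d-1)}$, and passing through the quotient $K/K^{(d-2)} \simeq \bigvee_\tau S^{d-1}$, yields the maps defined in the next subsection.

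The main obstacle is the precise identification in the left column: one must check that the iterated twists of the meridional discs, viewed up to based homotopy in $M \vee S^{d-1}$, really do assemble into the specific maps $\overline{\pitchfork}_\alpha$ and $\pitchfork_\alpha$ to be defined below, and that the projection of $\overline{\pitchfork}_\alpha$ to the $M$ summand is based-homotopic to $\mathrm{id}_M$. The latter should follow because, once the $S^{d-1}$ summand of $M \vee S^{d-1}$ is collapsed, the twist of $D_\tau$ around the core of $T$ becomes null-homotopic (it can be undone by sliding $D_\tau$ through the collapsed sphere), so each meridional disc returns to its original embedding in $M$. Finally, the case in which $K$ can be chosen with no $(d-1)$-cells is exactly the situation of Proposition \ref{p:formula-alpha-1}: there are no intersection discs, both $\overline{\pitchfork}_\alpha$ and $\pitchfork_\alpha$ are trivial, and the matrix \eqref{eq:formula-alpha-2} specialises to \eqref{eq:formula-alpha-1}, providing a consistency check.
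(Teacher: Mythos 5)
Your outline follows essentially the same approach as the paper: the right column is inherited verbatim from the proof of Proposition~\ref{p:formula-alpha-1} (this is Remark~\ref{rmk:half-of-second-proposition}), and the left column is analysed cell-by-cell on a spine $K\subset M'$ of dimension $\leq d-1$, isotoping a representative of $\alpha$ to meet the open $(d-1)$-cells transversely, and reading off the effect of the shear $\varphi_T$ on the meridional discs. A minor presentational difference: for the top-left entry $\mathrm{pr}_M\circ\bar\pi^M_\alpha\simeq\mathrm{id}_M$, you argue directly that after collapsing the $S^{d-1}$ summand the twisted meridional discs can be slid back, while the paper gets this more cleanly from the commutative square~\eqref{eq:proof-thetaalpha}/\eqref{eq:homotopic-to-identity}, using that the fill-in map $M\vee S^{d-1}\hookrightarrow M$ is homotopic to $\mathrm{pr}_M$ and conjugates $\pi_\alpha$ to the identity; both arguments are sound, but the paper's avoids re-examining the cell structure. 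Two cautions. First, your phrasing conflates what $\overline{\pitchfork}_\alpha$ and $\pitchfork_\alpha$ are: $\overline{\pitchfork}_\alpha$ is the \emph{whole} map $M\to M\vee S^{d-1}$ (not its ``$M$-valued component''), and $\pitchfork_\alpha=\mathrm{pr}_{S^{d-1}}\circ\overline{\pitchfork}_\alpha$ is its $S^{d-1}$-projection; the $M$-projection is what you show is $\mathrm{id}_M$. Second, and more importantly, the step you flag as ``the main obstacle'' --- verifying that the twisted meridional discs, pushed through the retraction to $M\vee S^{d-1}$, assemble precisely into the map of Definition~\ref{defn:pf2} (with the correct intersection degrees and the correct path segments $\alpha|_{[\alpha^{-1}(y_i),1]}$ attaching each local sphere back to the basepoint, and the correct local-orientation bookkeeping) --- is exactly where the content of the proof lives. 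The paper resolves it by first establishing the two jointly-weaker statements~\eqref{eq:jointly-weaker-equations} via an explicit model~\eqref{eq:walpha} for $w_\alpha=\mathrm{pr}_{S^{d-1}}\circ\bar\pi^M_\alpha$ showing $w_\alpha|_K$ factors through $K/K^{(d-2)}$ with the prescribed degrees, and then upgrading to the full identity $\bar\pi^M_\alpha\simeq\overline{\pitchfork}_\alpha$ by the cell-by-cell comparison of Figure~\ref{fig:alpha-tau}. You should supply this comparison rather than leave it as an acknowledged gap; without it, the sign and path data in Definition~\ref{defn:pf2} are not actually verified.
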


In \S\ref{subs:loopI} we prove Proposition \ref{p:formula-alpha-1}. In \S\ref{subs:loopII} we first define the maps $\overline{\pitchfork}_\alpha$ and $\pitchfork_\alpha$ in the statement of Proposition \ref{p:formula-alpha-2} (Definitions \ref{defn:pf1} and \ref{defn:pf2}) and then prove Proposition \ref{p:formula-alpha-2}.

In each case we prove the descriptions on the left-hand side of \eqref{eq:formula-alpha-1} and of \eqref{eq:formula-alpha-2}, and those on the right-hand side in terms of $(2 \times 2)$ matrices follow as a consequence. We note that in each case the top-right entry of the matrix is \emph{a priori} equal to $\alpha \circ \mathrm{coll} \colon S^{d-1} \to M$, but this is nullhomotopic as a based map, so it may be replaced with $*$. In contrast, the appearance of $\alpha \circ \mathrm{coll}$ in the formulas on the left-hand side of \eqref{eq:formula-alpha-1} and of \eqref{eq:formula-alpha-2} may not be replaced by $*$, since it is part of a description of a map $S^{d-1} \to M \vee S^{d-1}$ where the sphere is first collapsed to $[0,1] \vee S^{d-1}$, so in this case the interval may not be deformation retracted to its basepoint $0$, since its other endpoint $1$ is attached to the sphere $S^{d-1}$, which is wrapped with sign $\pm 1$ around the $S^{d-1}$ summand of $M \vee S^{d-1}$.

\subsection{Below the maximal handle dimension.}\label{subs:loopI}

In this subsection we prove Proposition \ref{p:formula-alpha-1}. Let us write
\begin{itemizeb}
\item $\bar{\pi}^M_\alpha \colon M \to M \vee S^{d-1}$ for the restriction of $\bar{\pi}_\alpha$ to the $M$ summand of $M \vee S^{d-1}$;
\item $\bar{\pi}^S_\alpha \colon S^{d-1} \to M \vee S^{d-1}$ for the restriction of $\bar{\pi}_\alpha$ to the $S^{d-1}$ summand of $M \vee S^{d-1}$.
\end{itemizeb}
In this notation, to prove Proposition \ref{p:formula-alpha-1}, we need to show that
\begin{equation}
\label{eq:two-formulas}
\bar{\pi}^M_\alpha \simeq \mathrm{inc}_M \qquad\text{and}\qquad \bar{\pi}^S_\alpha \simeq ((\alpha \circ \mathrm{coll}) \vee \mathrm{sgn}(\alpha)) \circ \mathrm{pinch}.
\end{equation}

We first prove the right-hand side of \eqref{eq:two-formulas}. This may in fact be seen purely geometrically from Figure \ref{fig:decomposition}. We need to describe the effect of $\pi_\alpha$ on the loop (representing a $(d-1)$-sphere) pictured in the bottom-left corner of that figure. As mentioned at the beginning of this section, $\pi_\alpha$ may be assumed to be supported in a tubular neighbourhood $T$ of a loop based at the puncture $z \cap B'$ and supported in $M' \cup B'$, as pictured in Figure \ref{fig:decomposition}. To see the effect of point-pushing along the tube $T$ on the $(d-1)$-sphere based at $*$ pictured in the figure, it is easier first to replace it, up to homotopy equivalence, by a $(d-1)$-sphere encircling the puncture $z \cap B'$ together with a ``tether'' connecting this sphere to the basepoint $*$ (this corresponds to the pinch and collapse maps in the formula \eqref{eq:two-formulas}). Point-pushing along $T$ has the effect on the tether of sending it around a loop homotopic to $\alpha$. On the $(d-1)$-sphere encircling the puncture, it acts by a map of degree $\pm 1$ depending on whether the tubular neighbourhood $T$ is orientable or not, in other words, whether or not $\alpha$ lifts to a loop in the orientation double cover of $M$, which is exactly $\mathrm{sgn}(\alpha)$. Putting this all together, we obtain the desired formula on the right-hand side of \eqref{eq:two-formulas}.

We prove the left-hand side of \eqref{eq:two-formulas} in two steps:
\begin{itemizeb}
\item $\bar{\pi}^M_\alpha \simeq \mathrm{inc}_M \circ \theta_\alpha$ for some self-map $\theta_\alpha \colon M \to M$;
\item $\theta_\alpha \simeq \mathrm{id}_M$.
\end{itemizeb}

Since the handle dimension of $M$ is at most $d-2$, there is an embedded CW-complex $K \subset M$ of dimension at most $d-2$, such that $M$ deformation retracts onto $K$. (Constructed, for example, using the cores of a handle decomposition of $M$ with handles of index at most $d-2$.) The restriction of $\bar{\pi}^M_\alpha$ to $K$ is a map of the form
\[
K \longrightarrow M \vee S^{d-1}.
\]
Choose a CW-complex structure on $M$ extending that of $K$ and give $S^{d-1}$ the unique CW-complex structure with a single $0$-cell and a single $(d-1)$-cell. With respect to these choices, we may homotope the map above to be \emph{cellular}, so that every $r$-cell of $K$ is mapped into a cell of dimension at most $r$. This implies that the image of the map must intersect $S^{d-1}$ only in the basepoint, so we have a factorisation up to homotopy
\[
\bar{\pi}^M_\alpha|_K \colon K \longrightarrow M \lhook\joinrel\longrightarrow M \vee S^{d-1},
\]
for some map $K \to M$. Since the inclusion of $K$ into $M$ is a homotopy equivalence, this implies also that $\bar{\pi}^M_\alpha$ itself factorises up to homotopy as a self-map $\theta_\alpha$ of $M$ followed by the inclusion into $M \vee S^{d-1}$. This establishes the first claim above.

We next have to prove that $\theta_\alpha$ is homotopic to the identity. Consider the following diagram.
\begin{equation}
\label{eq:proof-thetaalpha}
\centering
\begin{split}
\begin{tikzpicture}
[x=1mm,y=1mm]
\node (tl) at (0,30) {$M$};
\node (tr) at (40,30) {$M$};
\node (ml) at (0,15) {$M \vee S^{d-1}$};
\node (mr) at (40,15) {$M \vee S^{d-1}$};
\node (bl) at (0,0) {$M$};
\node (br) at (40,0) {$M$};
\incl{(tl)}{(ml)}
\incl{(tr)}{(mr)}
\incl{(ml)}{(bl)}
\incl{(mr)}{(br)}
\draw[->] (tl) to node[above,font=\small]{$\theta_\alpha$} (tr);
\draw[->] (ml) to node[above,font=\small]{$\bar{\pi}_\alpha$} (mr);
\draw[->] (bl) to node[above,font=\small]{$\mathrm{id}$} (br);
\end{tikzpicture}
\end{split}
\end{equation}
The upper vertical inclusions are both the inclusion of the $M$ summand into $M \vee S^{d-1}$. The lower vertical inclusions are both the embedding of $M \vee S^{d-1}$ into $M$ illustrated in Figure \ref{fig:decomposition}. The bottom square commutes up to homotopy since any point pushing map becomes homotopic to the identity once the puncture(s) have been filled in. The top square commutes up to homotopy by what we have just proven: that $\bar{\pi}^M_\alpha$ factors through $\theta_\alpha$ up to homotopy. The composition of the left-hand vertical maps is homotopic to the identity $M \to M$, and similarly for the right-hand side. Hence three out of the four sides of the outer square of \eqref{eq:proof-thetaalpha} are homotopic to the identity, so the fourth side $\theta_\alpha$ must also be homotopic to the identity.

This completes the proof of Proposition \ref{p:formula-alpha-1}.

\begin{rmk}
\label{rmk:half-of-second-proposition}
This also proves half of Proposition \ref{p:formula-alpha-2}, since that proposition is equivalent to the two statements
\begin{equation}
\label{eq:two-formulas-2}
\bar{\pi}^M_\alpha \simeq \overline{\pitchfork}_\alpha \qquad\text{and}\qquad \bar{\pi}^S_\alpha \simeq ((\alpha \circ \mathrm{coll}) \vee \mathrm{sgn}(\alpha)) \circ \mathrm{pinch},
\end{equation}
and in the proof above we did not use the hypothesis on the handle-dimension of $M$ when proving the right-hand side of \eqref{eq:two-formulas}, which is the same as the right-hand side of \eqref{eq:two-formulas-2}.
\end{rmk}

\subsection{In the maximal handle dimension.}\label{subs:loopII}

In this subsection, we first define the maps $\pitchfork_\alpha$ and $\overline{\pitchfork}_\alpha$ appearing in the statement of Proposition \ref{p:formula-alpha-2}. These depend, a priori, on some additional choices, including a CW-complex $K \subset M$ onto which $M$ deformation retracts. However, Proposition \ref{p:formula-alpha-2} implies that they do not depend on these additional choices up to homotopy (see Remark \ref{rmk:dependence-on-K}).

\begin{defn}
\label{defn:pf1}
Let $K \subset M$ be a CW-complex of dimension at most $d-1$ embedded into $M$ such that $M$ deformation retracts onto $K$. Assume also that $K$ has exactly one $0$-cell and that, for any $i$-cell $\tau$ of $K$, if $\Phi_\tau \colon D^i \to K$ denotes its characteristic map, then the restriction
\[
\Phi_\tau|_{\mathrm{int}(D^i)} \colon \mathrm{int}(D^i) \longrightarrow K \subset M
\]
is a smooth embedding. This exists since $M$ is connected and has non-empty boundary, so its handle-dimension is at most $d-1$: such a CW-complex $K$ may be constructed from the cores of a handle decomposition of $M$ with one $0$-handle. Let $\alpha \in \pi_1(M)$ and choose a representative loop of $\alpha$ that is a smooth embedding, transverse to the interior of every cell of $K$ and also transverse to $\partial M$. (For the assumption that the representative of $\alpha$ may be chosen to be an \emph{embedding}, we are using the fact that $M$ has dimension at least $3$.) Note that the fact that $\alpha$ is transverse to the cells of $K$ implies that it must be disjoint from the $(d-2)$-skeleton $K^{(d-2)}$ of $K$.

Given these choices, we define the map $\pitchfork_\alpha \colon M \to S^{d-1}$ as follows:
\begin{equation}
\label{eq:pf}
\pitchfork_\alpha \colon M \longrightarrow K \twoheadrightarrowlong K/K^{(d-2)} \cong \bigvee_\tau S^{d-1} \longrightarrow S^{d-1},
\end{equation}
where the map $M \to K$ is a homotopy inverse of the inclusion, the index $\tau$ runs over all $(d-1)$-cells of $K$ and the $\tau$-th component of the last map is a map $S^{d-1} \to S^{d-1}$ of degree $\sharp(\tau,\alpha)$, which is the algebraic intersection number of (the interior of) $\tau$ with $\alpha$.

There are two subtleties in this definition: we need to choose the identification of $K/K^{(d-2)}$ with a wedge of $(d-1)$-spheres unambiguously and we need to ensure that the algebraic intersection number $\sharp(\tau,\alpha)$ is well-defined.

For the first point, we simply choose, arbitrarily and once and for all, an orientation of $S^{d-1}$ and an orientation of each open $(d-1)$-cell $\Phi_\tau(\mathrm{int}(D^{d-1}))$ of $K$. The identification of $K/K^{(d-2)}$ with a wedge of copies of $S^{d-1}$ is then well-defined, up to based homotopy, by taking it to be \emph{orientation-preserving} on each open $(d-1)$-cell.

For the second point, to ensure that the algebraic intersection number $\sharp(\tau,\alpha)$ is well-defined, we need an orientation of $\alpha$ and of each open $(d-1)$-cell $\tau$, as well as a local orientation of $M$ at each intersection point of $\alpha$ with the interior of $\tau$, i.e., each point of
\begin{equation}
\label{eq:intersection}
\Phi_\tau(\mathrm{int}(D^{d-1}) \cap \alpha([0,1]).
\end{equation}
We have already chosen orientations of each open $(d-1)$-cell $\tau$, and $\alpha$ is an oriented loop, so it remains to choose local orientations of $M$ at each point of \eqref{eq:intersection}. We do this in several steps:
\begin{itemizeb}
\item We have already chosen an orientation of $S^{d-1}$, which is embedded into $B'$ (see Figure \ref{fig:decomposition}).
\item Let $R$ denote the closure of the connected component of $B' \smallsetminus S^{d-1}$ that is disjoint from $z$, and let $R' = R \smallsetminus \{*\}$. Then $R'$ is a codimension-zero submanifold of $M$ with boundary $\partial R' = (\partial B' \smallsetminus \{*\}) \sqcup (S^{d-1} \smallsetminus \{*\})$. The orientation of $S^{d-1}$ determines an orientation of $R'$ and hence of $\partial B' \smallsetminus \{*\}$.
\item In particular, this restricts to an orientation of $\partial M \cap B' \smallsetminus \{*\} = D \smallsetminus \{*\}$. Choosing a slightly larger disc in $\partial M$ containing $D$ in its interior, this determines a local orientation of $\partial M$ at the basepoint $*$.
\item This, together with $\alpha$, determines a local orientation of $M$ at $*$ as follows: we take it to be the local orientation of $M$ at $*$ such that the algebraic intersection number of $\alpha|_{[1-\epsilon,1]}$ with $\partial M$ at $*$ is $+1$.
\item If $M$ is orientable, this then determines an orientation of $M$, and in particular local orientations of $M$ at each point of \eqref{eq:intersection}.
\item If $M$ is non-orientable, we have to be more careful. Choose $\epsilon > 0$ such that all intersection points \eqref{eq:intersection} are contained in $\alpha([\epsilon,1])$ and choose a closed tubular neighbourhood $T$ of $\alpha|_{[\epsilon,1]}$. Since $T$ is an orientable codimension-zero submanifold of $M$ containing $*$ and each point of \eqref{eq:intersection}, we may use it to transport the local orientation of $M$ at $*$ to a local orientation of $M$ at each point of \eqref{eq:intersection}.
\end{itemizeb}

We note that this definition does \emph{not} depend on our arbitrary choices of orientations for $S^{d-1}$ and for each open $(d-1)$-cell $\tau$ of $K$:
\begin{itemizeb}
\item Suppose that we reverse the orientation of one $(d-1)$-cell $\tau_0$. This affects the identification of $K/K^{(d-2)}$ with the wedge of $(d-1)$-spheres in a way that corresponds to inserting an automorphism of $\bigvee_\tau S^{d-1}$ that sends each sphere to itself, has degree $-1$ on the $\tau_0$ component and has degree $+1$ on all other components. However, it also has the effect of reversing the sign of the algebraic intersection number $\sharp(\tau_0,\alpha)$, so these effects cancel each other out after composing all maps in \eqref{eq:pf}.
\item Suppose that we reverse the orientation of $S^{d-1}$. This affects the identification of $K/K^{(d-2)}$ with the wedge of $(d-1)$-spheres in a way that corresponds to inserting an automorphism of $\bigvee_\tau S^{d-1}$ that sends each sphere to itself and has degree $-1$ on each component. However, it also has the effect of reversing the local orientations of $M$ at each intersection point \eqref{eq:intersection} for each $\tau$, and so it reverses the sign of each algebraic intersection number $\sharp(\tau,\alpha)$. Again, these effects cancel each other out after composing all maps in \eqref{eq:pf}.
\end{itemizeb}

This completes the definition of the map $\pitchfork_\alpha \colon M \to S^{d-1}$.
\end{defn}

For the definition of $\overline{\pitchfork}_\alpha$, we again use an embedded CW-complex $K \subset M$ as in Definition \ref{defn:pf1}, and choose a representative loop of $\alpha \in \pi_1(M)$ as in Definition \ref{defn:pf1}.

\begin{defn}
\label{defn:pf2}
We now define a map $\overline{\pitchfork}_\alpha \colon M \to M \vee S^{d-1}$ whose composition with the projection $\mathrm{pr}_{S^{d-1}} \colon M \vee S^{d-1} \to S^{d-1}$ is $\pitchfork_\alpha$. This is the map
\begin{equation}
\label{eq:pf2}
\overline{\pitchfork}_\alpha \colon M \longrightarrow K \longrightarrow M \vee S^{d-1}
\end{equation}
where the first map is a homotopy inverse of the inclusion and the second map is defined as follows. On the $(d-2)$-skeleton it is defined to be the inclusion $K^{(d-2)} \subset K \subset M \subset M \vee S^{d-1}$. We now extend this to each $(d-1)$-cell of $K$, in other words, for each $(d-1)$-cell $\tau$ of $K$, we define a map
\begin{equation}
\label{eq:pf2tau}
\overline{\pitchfork}_{\alpha,\tau} \colon D^{d-1} \longrightarrow M \vee S^{d-1}
\end{equation}
whose restriction to $\partial D^{d-1}$ is equal to the attaching map $\phi_\tau \colon \partial D^{d-1} \to K^{(d-2)}$ of $\tau$ followed by the inclusion $K^{(d-2)} \subset K \subset M \subset M \vee S^{d-1}$. We define the map \eqref{eq:pf2tau} in several steps:
\begin{itemizeb}
\item Denote the intersection points of $\alpha$ with the interior of $\tau$ by
\[
\Phi_\tau(\mathrm{int}(D^{d-1})) \cap \alpha([0,1]) = \{ y_1,\ldots,y_n \}
\]
and write $x_i = \Phi_{\tau}^{-1}(y_i) \in \mathrm{int}(D^{d-1})$.
\item Let
\[
\epsilon = \tfrac{1}{8}\mathrm{min} \bigl( \{ \lvert x_i - x_j \rvert \text{ for } i,j \in \{1,\ldots,n\}, i \neq j \} \cup \{ 1 - \lvert x_i \rvert \text{ for } i \in \{1,\ldots,n\} \} \bigr)
\]
and write $S_i = \partial B_\epsilon(x_i)$ for the boundary of the ball of radius $\epsilon$ around $x_i$. Let $\sim$ be the equivalence relation that collapses each $S_i \subseteq D^{d-1}$ to a (different) point, for $i \in \{1,\ldots,n\}$. There is a canonical homeomorphism
\begin{equation}
\label{eq:identification-quotient}
D^{d-1} / {\sim} \;\cong\; D^{d-1} \cup_n \bigsqcup_n S^{d-1},
\end{equation}
where the notation $\cup_n$ indicates that we are taking the union along $n$ distinct basepoints, more precisely we identify $x_i \in D^{d-1}$ with the basepoint of the $i$th copy of $S^{d-1}$, for $i \in \{1,\ldots,n\}$. The homeomorphism \eqref{eq:identification-quotient} is given on $B_\epsilon(x_i)/S_i \subseteq D^{d-1}/{\sim}$ by identifying the ball $B_\epsilon(x_i)$ with $D^{d-1}$ by dilatation and translation, and then using the standard (stereographic) identification $D^{d-1} / \partial D^{d-1} \cong S^{d-1}$. It is given by the identity outside of each of the larger balls $B_{2\epsilon}(x_i)$, and on each subspace $(B_{2\epsilon}(x_i) \smallsetminus \mathrm{int}(B_\epsilon(x_i))) / S_i$ it is the homeomorphism
\[
(B_{2\epsilon}(x_i) \smallsetminus \mathrm{int}(B_\epsilon(x_i))) / S_i \longrightarrow B_{2\epsilon}(x_i)
\]
given by $x_i + y \longmapsto x_i + (\lvert y \rvert / \epsilon - 1)y$ (i.e.~``stretching'' inwards by a factor of two). Let
\[
c_n \colon D^{d-1} \longrightarrow D^{d-1} \cup_n \bigsqcup_n S^{d-1}
\]
be the quotient map $D^{d-1} \twoheadrightarrow D^{d-1} / {\sim}$ followed by the identification \eqref{eq:identification-quotient}. Composing this with the ``pinch and collapse map'' $(\mathrm{coll} \vee \mathrm{id}) \circ \mathrm{pinch}$ (see Notation \ref{notation}) on each $S^{d-1}$ factor we obtain a quotient map
\begin{equation}
\label{eq:pf2tau1}
\bar{c}_n \colon D^{d-1} \longrightarrow D^{d-1} \cup_n \bigsqcup_n \; \bigl([0,1] \vee S^{d-1}\bigr).
\end{equation}
See Figure \ref{fig:pf2} for a visual illustration of this construction.
\item Finally, we define \eqref{eq:pf2tau} by $\overline{\pitchfork}_{\alpha,\tau} = \smash{\overline{\pitchfork}}^{\diamond}_{\alpha,\tau} \circ \bar{c}_n$, where the map
\[
\smash{\overline{\pitchfork}}^{\diamond}_{\alpha,\tau} \colon D^{d-1} \cup_n \bigsqcup_n \; \bigl([0,1] \vee S^{d-1}\bigr) \longrightarrow M \vee S^{d-1}
\]
is defined on each piece of the domain as follows.
  \begin{itemizeb}
  \item On the $D^{d-1}$ piece, $\smash{\overline{\pitchfork}}^{\diamond}_{\alpha,\tau}$ is given by the characteristic map $\Phi_\tau \colon D^{d-1} \to K$ followed by the inclusion $K \subset M \subset M \vee S^{d-1}$.
  \item On the $i$-th $[0,1]$ piece, $\smash{\overline{\pitchfork}}^{\diamond}_{\alpha,\tau}$ is the path $\alpha|_{[\alpha^{-1}(y_i),1]}$ in $M$ (rescaled so that its domain is $[0,1]$). Note that this path ends at the basepoint.
  \item On the $i$-th $S^{d-1}$ piece, $\smash{\overline{\pitchfork}}^{\diamond}_{\alpha,\tau}$ is a based map $S^{d-1} \to S^{d-1}$ of degree $\epsilon_i \in \{\pm 1\}$, where the sign $\epsilon_i$ is determined as follows.
    \begin{itemizeb}
    \item[$\bullet$] As in Definition \ref{defn:pf1}, the chosen orientation of $S^{d-1}$ determines a local orientation of $M$ at $*$.
    \item[$\bullet$] We have also chosen an orientation of $D^{d-1}$, and $\Phi_\tau$ is a smooth embedding on the interior of $D^{d-1}$, so we also have an orientation of $\Phi_\tau(\mathrm{int}(D^{d-1}))$. This determines a local orientation of $M$ at the intersection point $y_i$: namely the one with respect to which the intersection number of $\Phi_\tau(\mathrm{int}(D^{d-1}))$ with $\alpha([0,1])$ at $y_i$ is $+1$.
    \item[$\bullet$] If $M$ is orientable, these two local orientations each determine an orientation of $M$, and we set $\epsilon_i$ to be $+1$ if they agree and $-1$ if they disagree.
    \item[$\bullet$] If $M$ is non-orientable, we have to be more careful, just as in Definition \ref{defn:pf1}. Choose $\delta > 0$ such that all intersection points $y_1,\ldots,y_n$ are contained in $\alpha([\delta,1])$ and choose a tubular neighbourhood $T$ of $\alpha|_{[\delta,1]}$. Since $T$ is an orientable codimension-zero submanifold of $M$ containing $*$ and $y_i$, the two local orientations of $M$ (at $*$ and at $y_i$) each determine an orientation of $T$. We set $\epsilon_i = +1$ if they agree and $\epsilon_i = -1$ if they disagree.
    \end{itemizeb}
  \end{itemizeb}
\end{itemizeb}

One may see, as in Definition \ref{defn:pf1}, that this construction of $\overline{\pitchfork}_\alpha$ is independent of the choices of orientation of $S^{d-1}$ and $D^{d-1}$.
\end{defn}

\begin{figure}[t]
\centering
\includegraphics[scale=0.6]{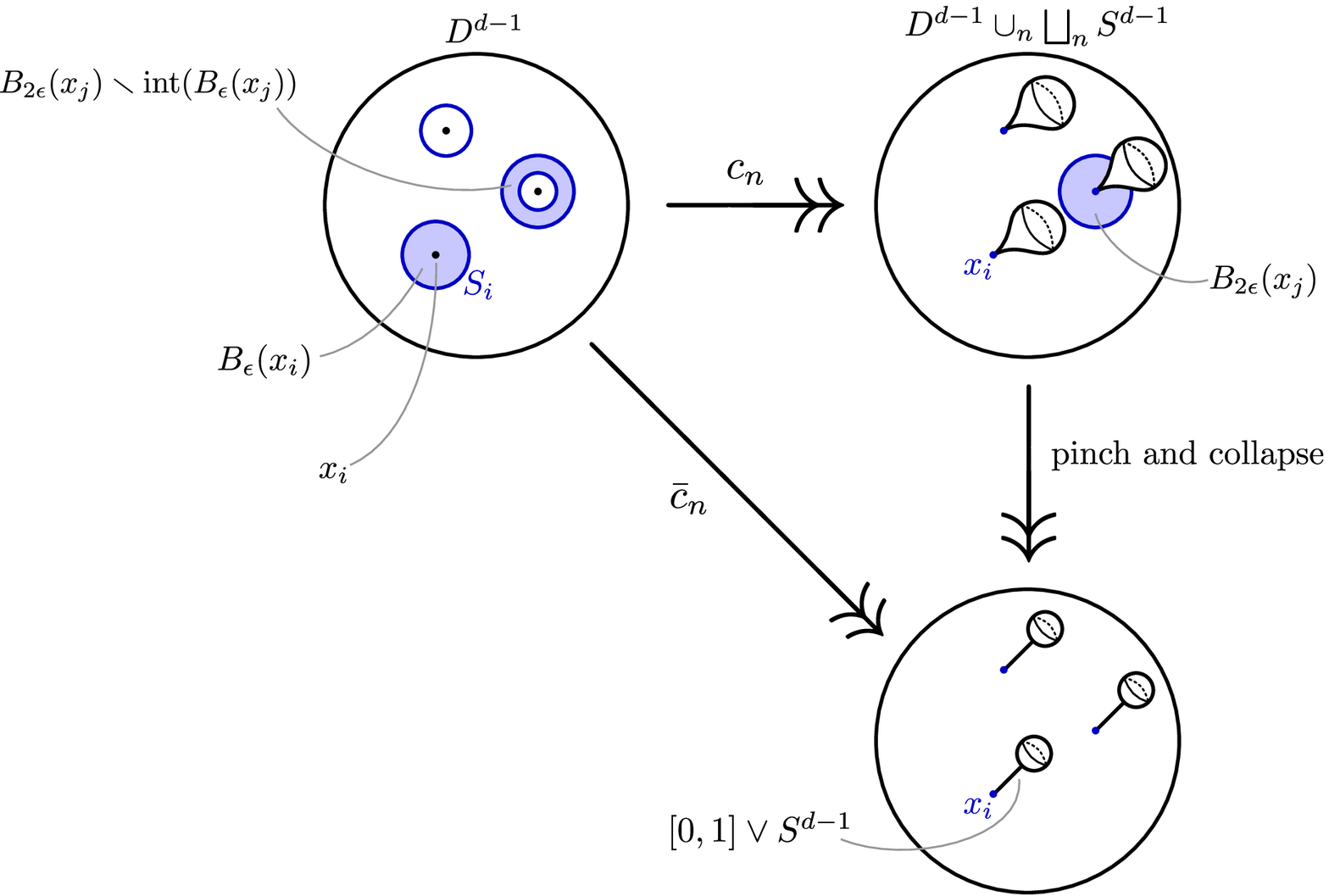}
\caption{The quotient map $\bar{c}_n \colon D^{d-1} \longrightarrow D^{d-1} \cup_n \bigsqcup_n \; \bigl([0,1] \vee S^{d-1}\bigr)$ from Definition \ref{defn:pf2}.}
\label{fig:pf2}
\end{figure}

\begin{rmk}
\label{rmk:dependence-on-K}
A priori, the maps $\pitchfork_\alpha \colon M \to S^{d-1}$ and $\overline{\pitchfork}_\alpha \colon M \to M \vee S^{d-1}$ described in Definitions \ref{defn:pf1} and \ref{defn:pf2} depend on the choice of embedded CW-complex $K$ and the choice of representative of $\alpha \in \pi_1(M)$ that is a smooth embedding and transverse to $\partial M$ and each open cell of $K$. However, a consequence of Proposition \ref{p:formula-alpha-2} is that these maps, up to basepoint-preserving homotopy, do \emph{not} depend on these choices; they depend only on the element $\alpha \in \pi_1(M)$. This is because Proposition \ref{p:formula-alpha-2} identifies these two maps with certain maps derived from the point-pushing map $\pi_\alpha$, which depends up to homotopy only on $\alpha \in \pi_1(M)$.
\end{rmk}

\begin{proof}[Proof of Proposition \ref{p:formula-alpha-2}]
As pointed out in Remark \ref{rmk:half-of-second-proposition}, we have already proven one half of Proposition \ref{p:formula-alpha-2} while proving Proposition \ref{p:formula-alpha-1}. The remaining statement to prove is
\begin{equation}
\label{eq:one-equation}
\bar{\pi}^M_\alpha \simeq \overline{\pitchfork}_\alpha \colon M \too M \vee S^{d-1}.
\end{equation}
We will first prove the two (jointly weaker) statements:
\begin{equation}
\label{eq:jointly-weaker-equations}
\mathrm{pr}_M \circ \bar{\pi}^M_\alpha \simeq \mathrm{id}_M \qquad\text{and}\qquad \mathrm{pr}_{S^{d-1}} \circ \bar{\pi}^M_\alpha \simeq {\pitchfork_\alpha},
\end{equation}
which correspond to the $(2 \times 2)$-matrix description of $\bar{\pi}_\alpha$ on the right-hand side of \eqref{eq:formula-alpha-2}. Consider the following homotopy-commutative diagram.
\begin{equation}
\label{eq:homotopic-to-identity}
\centering
\begin{split}
\begin{tikzpicture}
[x=1mm,y=1mm]
\node (tll) at (-30,15) {$M$};
\node (tl) at (0,15) {$M \vee S^{d-1}$};
\node (tr) at (40,15) {$M \vee S^{d-1}$};
\node (bl) at (0,0) {$M$};
\node (br) at (40,0) {$M$};
\incl{(tll)}{(tl)}
\incl{(tl)}{(bl)}
\incl{(tr)}{(br)}
\draw[->] (tl) to node[above,font=\small]{$\bar{\pi}_\alpha$} (tr);
\draw[->] (bl) to node[above,font=\small]{$\mathrm{id}$} (br);
\draw[->] (tll) to node[below left,font=\small]{$\mathrm{id}$} (bl);
\end{tikzpicture}
\end{split}
\end{equation}
(The square is the same as the bottom square of \eqref{eq:proof-thetaalpha}.) The two vertical inclusions are both the embedding of $M \vee S^{d-1}$ into $M$ illustrated in Figure \ref{fig:decomposition}. But this is homotopic to the projection $\mathrm{pr}_M$ of $M \vee S^{d-1}$ onto its first summand, so $\mathrm{pr}_M \circ \bar{\pi}^M_\alpha$ is the composition from the top-left to the bottom-right of the diagram, and hence homotopic to the identity. This proves the left-hand side of \eqref{eq:jointly-weaker-equations}.

Next, we prove the right-hand side of \eqref{eq:jointly-weaker-equations}. We start by giving another description of the map
\[
w_\alpha = \mathrm{pr}_{S^{d-1}} \circ \bar{\pi}^M_\alpha \colon M \too S^{d-1}
\]
using Figure \ref{fig:decomposition}. Choose a path $p$ in $B'$ from $*$ to the point $z \cap B'$ and choose a loop $\delta$ in $B' \cup M'$, intersecting $\partial M'$ transversely in two points, in the homotopy class of $p \cdot \alpha \cdot \bar{p}$. Also choose a tubular neighbourhood $T$ of $\delta \cap M'$ in $M'$. Geometrically, the map $w_\alpha \colon M \to S^{d-1}$ is then given by starting in $M'$, including into $M \smallsetminus z$, applying the point pushing map along the loop $\delta$ and then collapsing onto the copy of $S^{d-1}$ contained in $B'$. Clearly the complement $M' \smallsetminus T$ of the tubular neighbourhood $T$ is sent to the basepoint under this map. To describe how $w_\alpha$ acts on $T$, we use the following identifications. The intersection $T \cap \partial B'$ consists of two disjoint $(d-1)$-discs $T_0$ and $T_1$, where we assume that $T_0$ contains the intersection point of $\delta \cap \partial B'$ where $\delta$ is pointing into $M'$ and $T_1$ contains the intersection point of $\delta \cap \partial B'$ where $\delta$ is pointing into $B'$. We may then identify $T$ with $T_1 \times [0,1]$, write $\partial_l T = \partial T \smallsetminus (\mathrm{int}(T_0) \cup \mathrm{int}(T_1))$ and describe the map $w_\alpha$ restricted to $T$, as a map of pairs $(T,\partial_l T) \to (S^{d-1},*)$, by
\begin{equation}
\label{eq:walpha}
(T,\partial_l T) \cong (T_1,\partial T_1) \times [0,1] \longrightarrow (T_1,\partial T_1) \longrightarrow (T_1 / \partial T_1 , \partial T_1 / \partial T_1) \cong (S^{d-1},*),
\end{equation}
where the middle two maps are the obvious projections and the identification on the right-hand side is induced by the projection $T_1 \twoheadrightarrow S^{d-1}$ given by
\[
\begin{tikzcd}
T_1 \ar[r,hook] & \partial B' \ar[r,"r"] & \partial B' \ar[r,"{\pi}",two heads] & \partial B' \cong S^{d-1},
\end{tikzcd}
\]
where $r$ is a reflection in the $(d-1)$-sphere $\partial B'$, $\pi$ is a self-surjection of $\partial B'$ with the properties that $\pi^{-1}(*) = \partial B' \smallsetminus \mathrm{int}(T_1)$ and $\pi$ is locally orientation-preserving on $\mathrm{int}(T_1)$, and the homeomorphism $\partial B' \cong S^{d-1}$ is given by a based isotopy in $B'$ between $\partial B'$ and the embedded copy of $S^{d-1}$ in $B'$ in Figure \ref{fig:decomposition}.

We now use this geometric description of $w_\alpha$ to show that it is homotopic to the map $\pitchfork_\alpha$ defined in Definition \ref{defn:pf1}. Let $K$ be a CW-complex of dimension at most $d-1$ embedded into $M'$, such that $M'$ deformation retracts onto $K$. We need to show that the restriction of $w_\alpha$ to $K$ factors as
\begin{equation}
\label{eq:walphaK}
K \twoheadrightarrowlong K/K^{(d-2)} \cong \textstyle\bigvee_\tau S^{d-1} \longrightarrow S^{d-1},
\end{equation}
where the $\tau$-th component of the right-hand map is a map $f_\tau \colon S^{d-1} \to S^{d-1}$ of degree $\sharp(\tau,\delta)$. By smooth approximation and transversality, we may assume that each $(d-1)$-cell $\tau$ of $K$ is smoothly embedded into $M'$ and that $\delta$ and $T$ have been chosen so that (a) each $r$-cell of $K$, for $r \leq d-2$, is disjoint from $T$ and (b) each $\tau \cap T$, for $\tau$ a $(d-1)$-cell of $K$, consists of finitely many $(d-1)$-discs each intersecting $\delta$ transversely in one point.

By property (a), and since $M' \smallsetminus T$ is sent to the basepoint by $w_\alpha$, we see that its restriction to $K$ must factor through the projection $K \twoheadrightarrow K/K^{(d-2)}$. So we just have to show that $f_\tau$ has degree $\sharp(\tau,\delta)$. By property (b) and the description \eqref{eq:walpha} of $w_\alpha|_T$, each component of the disjoint union of $(d-1)$-discs $\tau \cap T$ contributes either $+1$ or $-1$ to $\mathrm{deg}(f_\tau)$. Being careful about (local) orientations as explained in Definition \ref{defn:pf1}, we see that the sum of these $+1$'s and $-1$'s is precisely the algebraic intersection number $\sharp(\tau,\delta)$ of $\tau$ and $\delta$.

This completes the proof that $w_\alpha|_K$ factors as in \eqref{eq:walphaK}, and hence that $w_\alpha \simeq {\pitchfork_\alpha}$, in other words, the right-hand side of \eqref{eq:jointly-weaker-equations}.

The proof of \eqref{eq:one-equation} is similar to the proof above of the right-hand side of \eqref{eq:jointly-weaker-equations}: looking at Figure \ref{fig:decomposition} and using a geometric model for the point-pushing map supported in a tubular neighbourhood of an embedded loop representing $\alpha$, one checks carefully that the definition of $\overline{\pitchfork}_\alpha$ from Definition \ref{defn:pf2} is a correct description of $\bar{\pi}^M_\alpha$ up to homotopy. This is explained in Figure \ref{fig:alpha-tau}, which depicts the map $\bar{\pi}^M_\alpha$ induced by point-pushing along $\alpha$ and compares it to the definition of $\overline{\pitchfork}_\alpha$.
\end{proof}

\definecolor{tubular}{RGB}{0,150,0}
\begin{figure}[t]
\labellist
\small\hair 2pt
 \pinlabel {$M'$} [ ] at 164 642
 \pinlabel {$B$} [ ] at 89 585
 \pinlabel {$y_1$} [ ] at 176 601
 \pinlabel {$y_2$} [ ] at 260 595
 \pinlabel {$y_3$} [ ] at 247 524
 \pinlabel {\textcolor{blue}{$\tau$}} [b] at 228 563
 \pinlabel {\textcolor{tubular}{$T$}} [b] at 202 607
 \pinlabel {\textcolor{red}{$\alpha$}} [l] at 138 545
 \pinlabel {$*$} [t] at 125 527
 \pinlabel {\footnotesize (2)} [ ] at 360 654
 \pinlabel {\footnotesize $\alpha \cap \tau$ viewed in $D^{d-1}_\tau$} [ ] at 522 644
 \pinlabel {\footnotesize $\mathrm{inc} \circ \Phi_\tau$} [b] at 487 483
 \pinlabel {\footnotesize $\alpha|_{[\alpha^{-1}(x),1]}$} [b] at 517 434
 \pinlabel {\footnotesize $\pm \mathrm{id}$} [b] at 517 415
 \pinlabel {$M$} [l] at 535 486
 \pinlabel {$M$} [l] at 551 435
 \pinlabel {$S^{d-1}$} [l] at 550 417
 \pinlabel {\footnotesize (1)} [ ] at 57 483
 \pinlabel {\footnotesize $\alpha \cap \tau$ viewed in $M$} [ ] at 183 483
 \pinlabel {\textcolor{red}{$\alpha$}} [b] at 194 448
 \pinlabel {$y_1$} [t] at 105 428
 \pinlabel {$y_2$} [t] at 166 428
 \pinlabel {$y_3$} [t] at 225 428
 \pinlabel {$S^{d-1}$} [t] at 305 435
\endlabellist
\centering
\includegraphics[scale=0.7]{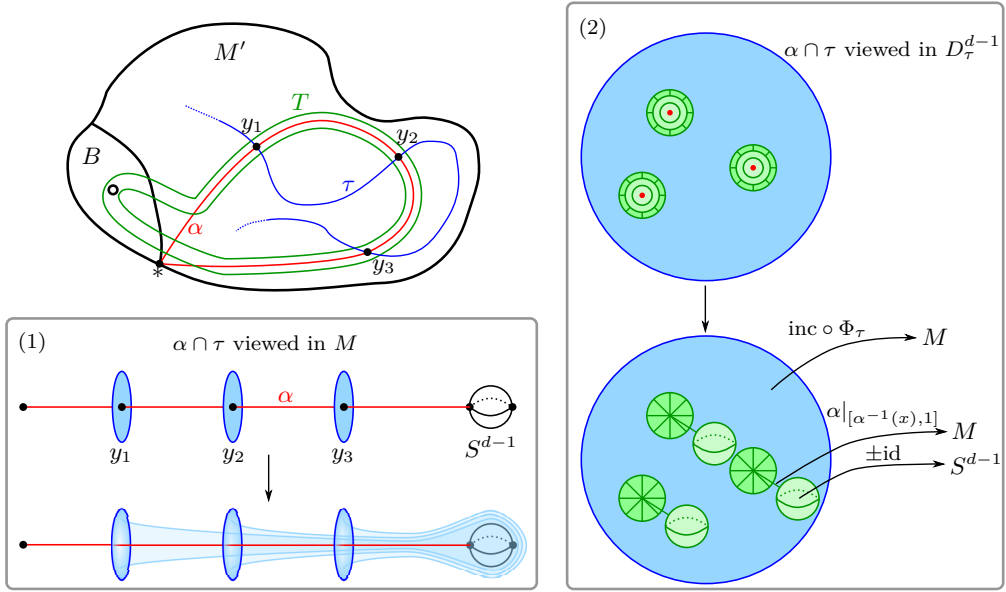}
\caption{Two views of the map $\bar{\pi}^M_\alpha \colon M \to M \vee S^{d-1}$ induced by point-pushing along an embedded loop $\alpha$, and in particular its effect on a $(d-1)$-cell $\tau$. In $(1)$, we view the loop $\alpha$ (in red) and a neighbourhood of its intersections $\{y_1,\ldots,y_n\}$ with $\tau$ (in blue) from within $M$. From the geometric description of point-pushing (Lemma \ref{l:geometric-point-pushing}), the result of point-pushing is as depicted in the bottom half of $(1)$: the $n$ small disc neighbourhoods of $\{y_1,\ldots,y_n\}$ in $\tau$ are pulled along $\alpha$ and wrapped around the $S^{d-1}$ summand of $M \vee S^{d-1}$. From this, we may deduce the description of $\bar{\pi}^M_\alpha$ up to homotopy given in part $(2)$ of the figure. At the top of $(2)$, we view the disc $D_{\tau}^{d-1}$ (whose image under the characteristic map $\Phi_\tau$ is the cell $\tau$) in blue and its intersection $\{y_1,\ldots,y_n\}$ with $\alpha$ as a configuration of red points. We also choose small disc neighbourhoods of each of these points (now depicted in green), divided into three concentric regions. Translating the depiction of $\bar{\pi}^M_\alpha$ from $(1)$ into this viewpoint, we see that the blue region (the complement of the small green disc neighbourhoods) is fixed by $\bar{\pi}^M_\alpha$, in other words, it is simply mapped into $M$ by the characteristic map $\Phi_\tau$ of the cell. For each small green disc neighbourhood, its image under $\bar{\pi}^M_\alpha$ is illustrated as a light blue surface in $(1)$; projecting this onto $\tau \cup \alpha \cup S^{d-1}$ does not change it up to homotopy, and this may then be described in $(2)$ as follows: the outer region of each green disc is ``stretched'' to cover the whole green disc (and then mapped into $M$ via the characteristic map $\Phi_\tau$); the intermediate region is collapsed to an interval and then mapped into $M$ via a terminal segment of the loop $\alpha$; the central region is collapsed to a sphere and then mapped with degree $\pm 1$, depending on local orientations, to the $S^{d-1}$ summand of $M \vee S^{d-1}$. This is precisely the map \eqref{eq:pf2tau} from Definition \ref{defn:pf2} (see in particular Figure \ref{fig:pf2}), which is the restriction of $\overline{\pitchfork}_\alpha$ to the cell $\tau$. Thus for each $(d-1)$-cell $\tau$, the restrictions of $\bar{\pi}^M_\alpha$ and of $\overline{\pitchfork}_\alpha$ to $\tau$ are homotopic relative to its boundary; hence $\bar{\pi}^M_\alpha \simeq \overline{\pitchfork}_\alpha$.}
\label{fig:alpha-tau}
\end{figure}

%%%%%%%%%%%%%%%%%%%%%%%%%%%%%%%%%%%%%%%%%%%
%%%%%%%%%%%%%%%%%%%%%%%%%%%%%%%%%%%%%%%%%%%
\section{Examples}\label{s:examples}
%%%%%%%%%%%%%%%%%%%%%%%%%%%%%%%%%%%%%%%%%%%
%%%%%%%%%%%%%%%%%%%%%%%%%%%%%%%%%%%%%%%%%%%

To illustrate the more complicated setting where $M$ is non-simply-connected and has maximal handle dimension, we discuss some explicit examples, namely
\[
M = (S^1 \times S^2) \smallsetminus \mathrm{int}(D^3)
\]
and more generally
\[
M = {\underbrace{(S^1 \times S^2) \sharp (S^1 \times S^2) \sharp \cdots \sharp (S^1 \times S^2)}_{g \text{ copies}}} \smallsetminus \mathrm{int}(D^3)
\]
which all have maximal handle-dimension $\mathrm{dim}(M) - 1 =2$ and which have fundamental groups $\bZ$ and $F_g$, the free group on $g$ generators, respectively. Indeed, the following computations generalise to all
\[
M = M^d_{g,1}= {\underbrace{(S^1 \times S^{d-1}) \sharp (S^1 \times S^{d-1}) \sharp \cdots \sharp (S^1 \times S^{d-1})}_{g \text{ copies}}} \smallsetminus \mathrm{int}(D^d)
\]
for $d\geq 3$ and $g>0$.

\definecolor{dgreen}{RGB}{0,180,0}
\definecolor{dred}{RGB}{148,0,0}
\begin{figure}[t]
\labellist
\small\hair 2pt
 \pinlabel {$\textcolor{red}{\tau}$} [ ] at 38 182
 \pinlabel {$\textcolor{red}{\alpha}$} [ ] at 181 117
 \pinlabel {$\textcolor{dgreen}{T}$} [ ] at 240 159
 \pinlabel {$\textcolor{dred}{p}$} [ ] at 180 84
\endlabellist
\centering
\includegraphics[scale=1]{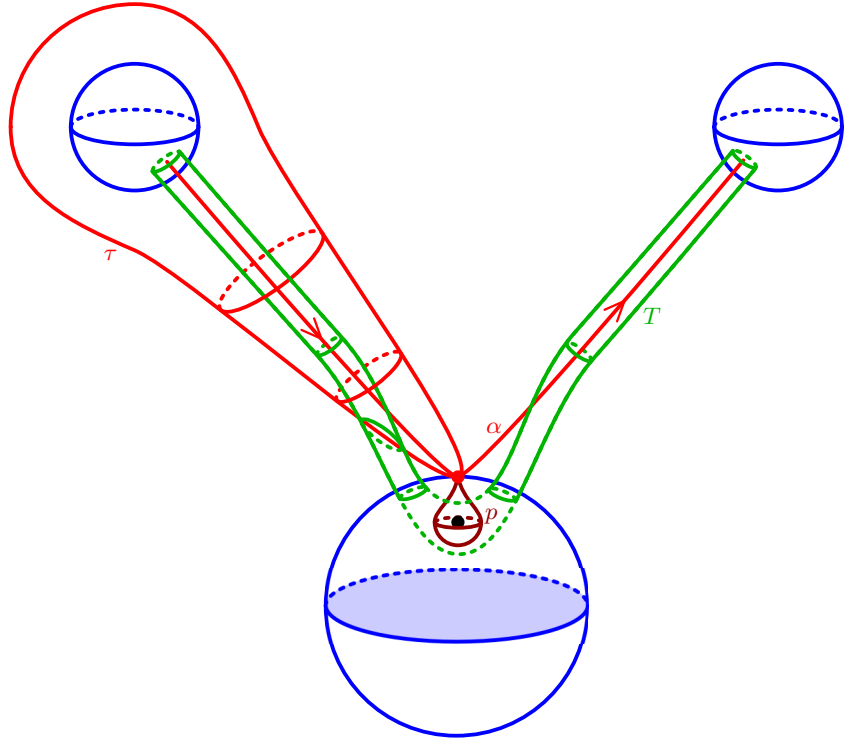}
\caption{The picture is to be thought of as $S^3$ with three open balls (in blue) cut out, and the boundaries of two of them (the top two) identified by a reflection. This is a model for the manifold $M = (S^1 \times S^2) \smallsetminus \mathrm{int}(D^3)$. The embedded copy of $S^1 \vee S^2$ is drawn in red, consisting of a $1$-sphere called $\alpha$ and a $2$-sphere called $\tau$. The manifold $M$ deformation retracts onto this subspace. As a model for $M$ with a puncture removed, we glue back in the top half of the lower $3$-ball (so the boundary now consists of the light blue shaded $2$-disc together with the southern hemisphere of the lower blue $2$-sphere) and then remove the black point. This manifold (let us call it $M'$) deformation retracts onto the embedded wedge sum $S^1 \vee S^2 \vee S^2$ consisting of $\alpha$, $\tau$ and the dark red $2$-sphere called $p$. The green solid cylinder called $T$ is a tubular neighbourhood of $\alpha$, isotoped slightly so that it contains the puncture in its interior. Thus, the effect of the point-pushing map on $\alpha$ may be realised explicitly by a diffeomorphism of the manifold $M'$ supported in the interior of $T$, as described in Lemma \ref{l:geometric-point-pushing}.}
\label{fig:s1s2}
\end{figure}

\begin{eg}
First, consider $M = (S^1 \times S^2) \smallsetminus \mathrm{int}(D^3)$ and let $\alpha$ be a generator of $\pi_1(M) \cong \bZ$. By Proposition \ref{p:formula-alpha-2}, the point-pushing map
\[
\bar{\pi}_\alpha \colon M \vee S^2 \too M \vee S^2
\]
has a simple explicit description when restricted to the $S^2$ summand, and is homotopic to the (in general complicated) map $\overline{\pitchfork}_\alpha \colon M \to M \vee S^2$ of Definition \ref{defn:pf2} when restricted to the $M$ summand.

In this example, $M$ is homotopy equivalent to $S^1 \vee S^2$ (see Figure \ref{fig:s1s2} for a picture of an embedded $S^1 \vee S^2$ onto which it deformation retracts). So, under this identification, the point pushing map $\bar{\pi}_\alpha$ is an endomorphism of $S^1 \vee S^2 \vee S^2$. We will label the $1$- and  $2$-spheres with subscripts $\alpha$, $\tau$ and $p$ to indicate which of the spheres they correspond to (light or dark red spheres in Figure \ref{fig:s1s2}). Thus our aim is to describe (up to based homotopy) the map
\[
\bar{\pi}_\alpha \colon S^1_\alpha \vee S^2_p \vee S^2_\tau = X \too X = S^1_\alpha \vee S^2_p \vee S^2_\tau .
\]
This is an element of the homotopy set $\langle X,X \rangle = \pi_0(\mathrm{Map}_*(X,X))$, which becomes a monoid under composition. In fact, we know of course that $\bar{\pi}_\alpha$ must be an \emph{invertible} element of this monoid, i.e.\ an element of $\pi_0(\mathrm{hAut}_*(X))$, but we will describe it as an element of the larger monoid $\langle X,X \rangle$. In order to do this, we first describe the monoid $\langle X,X \rangle$ explicitly.

First, note that there is a  bijection
\[
\langle X,X \rangle \;\cong\; \pi_1(X) \times \pi_2(X) \times \pi_2(X),
\]
and that $\pi_1(X) \cong \bZ\{\alpha\}$, the free (abelian) group generated by $\alpha$. The second homotopy group of $X$ is the same as that of its universal cover, and using Hilton's theorem \cite{Hilton1955} to compute homotopy groups of wedges of spheres, we see that
\[
\pi_2(X) \cong \bZ\{ \alpha^n p , \alpha^n \tau \mid n \in \bZ \},
\]
the free abelian group generated by the symbols $\alpha^n p$ and $\alpha^n \tau$ for each $n \in \bZ$. Moreover, the action of $\pi_1(X) = \bZ\{\alpha\}$ is given by $\alpha . \alpha^n p = \alpha^{n+1} p$ and $\alpha . \alpha^n \tau = \alpha^{n+1} \tau$. This means that we may write $\pi_2(X) \cong \bZ[\alpha^{\pm 1}]\{p,\tau\} = \bZ[\pi_1(X)]\{p,\tau\}$ as a free module over the group-ring of $\pi_1(X)$. Putting these identifications together, we have
\begin{equation}
\label{eq:identification}
\langle X,X \rangle \;\cong\; \bZ\{\alpha\} \times \bZ[\alpha^{\pm 1}]\{p,\tau\} \times \bZ[\alpha^{\pm 1}]\{\tau,p\}
\end{equation}
as a set. To describe the monoid operation (composition) on $\langle X,X \rangle$ under this identification, it is useful to include it into the larger monoid $\langle \widetilde{X},\widetilde{X} \rangle$, where
\[
\widetilde{X} \simeq \bigvee_{i \in \bZ} S^2_{\alpha^i p} \vee \bigvee_{i \in \bZ} S^2_{\alpha^i \tau}
\]
is the universal cover of $X$. Since $\vee$ is the coproduct for pointed spaces, we have
\begin{equation}
\label{eq:identification-universal-cover}
\langle \widetilde{X},\widetilde{X} \rangle \cong M_2(M_{\bZ}^{\mathrm{vf}}(\bZ)),
\end{equation}
the monoid of $2 \times 2$ block matrices whose entries are vertically-finite $\bZ \times \bZ$ matrices with entries in $\bZ$. (Vertically-finite means that each column has only finitely many non-zero entries.) For example, the $(i,j)$ entry in the bottom-left block of the matrix corresponding to $f \colon \widetilde{X} \to \widetilde{X}$ records the degree of the map
\[
S^2_{\alpha^j p} \lhook\joinrel\longrightarrow \widetilde{X} \xrightarrow{\;f\;} \widetilde{X} \relbar\joinrel\twoheadrightarrow S^2_{\alpha^i \tau}.
\]
Once a compatible base point in $\widetilde X$ is fixed, each based self-map of $X$ lifts uniquely up to homotopy to a based self-map of $\widetilde{X}$, so there is an injection $\langle X,X \rangle \hookrightarrow \langle \widetilde{X},\widetilde{X} \rangle$. Under the identifications \eqref{eq:identification} and \eqref{eq:identification-universal-cover}, this is given by
\[
\left( k\alpha , \sum_i \alpha^i(m_i p + n_i \tau) , \sum_i \alpha^i (r_i p + s_i \tau) \right) \;\longmapsto\; \left( \begin{matrix} A & B \\ C & D \end{matrix} \right) ,
\]
where each of the matrices $A,B,C,D$ is a diagonally constant matrix of slope $-k$, in other words its $(i,j)$ entry is equal to its $(i-jk,0)$ entry; in particular it is determined by its $0$th column, and the $0$th columns of $A=(a_{ij})$, $B=(b_{ij})$, $C=(c_{ij})$, $D=(d_{ij})$ are given by
\[
a_{i0} = m_i \qquad b_{i0} = r_i \qquad c_{i0} = n_i \qquad d_{i0} = s_i .
\]
For example, the identity $X \to X$ corresponds to $(\alpha,p,\tau)$, which is sent to $\left( \begin{smallmatrix} I & 0 \\ 0 & I \end{smallmatrix} \right)$, and the map $X \to X$ that is the identity on the two $S^2$ factors and collapses the $S^1$ factor to the basepoint corresponds to $(0,p,\tau)$, which is sent to $\left( \begin{smallmatrix} 1_0 & 0 \\ 0 & 1_0 \end{smallmatrix} \right)$, where $1_0$ is the matrix with $1$s on the $0$th row and $0$s elsewhere.

Since the operation on $\langle \widetilde{X},\widetilde{X} \rangle$ is just multiplication of matrices, one may use this inclusion of monoids to deduce a formula for the operation on $\langle X,X \rangle$ under the identification \eqref{eq:identification}, which is given as follows:
{\small \begin{multline*}
\left( k\alpha , \sum_i \alpha^i(m_i p + n_i \tau) , \sum_i \alpha^i (r_i p + s_i \tau) \right)
\circ
\left( k'\alpha , \sum_j \alpha^j(m'_j p + n'_j \tau) , \sum_j \alpha^j (r'_j p + s'_j \tau) \right) \\
=
\left( kk'\alpha ,
\sum_{i,j} \alpha^{i+jk} ((m_i m'_j + r_i n'_j)p + (n_i m'_j + s_i n'_j)\tau ) ,
\sum_{i,j} \alpha^{i+jk} ((m_i r'_j + r_i s'_j)p + (n_i r'_j + s_i s'_j)\tau ) \right) .
\end{multline*}}

By considering the action on the universal cover, and using Definition \ref{defn:pf2} and Proposition \ref{p:formula-alpha-2}, we may write the element $\bar{\pi}_\alpha \in \langle X,X \rangle$ in terms of these explicit descriptions of $\langle X,X \rangle$ as follows:
\[
\bar{\pi}_\alpha = (\alpha , \alpha p , \tau + p).
\]
Similarly, we may calculate that:
\[
\bar{\pi}_{\alpha^{-1}} = (\alpha , \alpha^{-1} p , \tau - \alpha^{-1} p).
\]
As a sanity check, let us verify that these are indeed inverse elements in the monoid. After including into the larger monoid $\langle \widetilde{X},\widetilde{X} \rangle$, we have
\[
\bar{\pi}_{\alpha} = \left( \begin{matrix} I^{(1)} & I \\ 0 & I \end{matrix} \right) \qquad\text{and}\qquad \bar{\pi}_{\alpha^{-1}} = \left( \begin{matrix} I^{(-1)} & -I^{(-1)} \\ 0 & I \end{matrix} \right) ,
\]
where $A^{(\ell)}$ denotes the matrix obtained by shifting $A$ vertically upwards by $\ell$ steps, and these matrices are clearly inverses. This description also in particular encodes the fact that $\bar{\pi}_\alpha$ acts on $\pi_1(X) \cong \bZ\{\alpha\}$ by the identity and on $H_2(X;\bZ) \cong \bZ\{p,\tau\}$ by $\left( \begin{smallmatrix} 1 & 1 \\ 0 & 1 \end{smallmatrix} \right)$. (The action on $H_2$ is obtained by applying the operation $M_{\bZ}^{\mathrm{vf}}(\bZ) \to \bZ$ that takes the sum of the entries in the $0$th column to each entry of the $2 \times 2$ block matrix.)

The element $\bar{\pi}_\alpha = (\alpha , \alpha p , \tau + p) \in \langle X,X \rangle$ has infinite order: this can be detected by its action on $H_2(-;\bZ)$, but one may also directly calculate:
\[
(\bar{\pi}_{\alpha})^n = \underbrace{(\alpha , \alpha p , \tau + p) \circ \cdots \circ (\alpha , \alpha p , \tau + p)}_{n} = (\alpha , \alpha^n p , \tau + (1 + \alpha + \cdots + \alpha^{n-1})p),
\]
using the inclusion into $\langle \widetilde{X},\widetilde{X} \rangle$ and the identity $I^{(\ell)} I^{(k)} = I^{(\ell + k)}$. Hence the point-pushing homomorphism
\[
\pi_1(M) \cong \bZ\{\alpha\} \too \pi_0(\mathrm{hAut}_*(M \vee S^2)) \subset \langle X,X \rangle
\]
is injective. This factors through the point-pushing homomorphism
\[
\pi_1(M) \too \pi_0(\mathrm{Homeo}_*(M \smallsetminus *)),
\]
which is therefore also injective.
\end{eg}

\begin{eg}
Consider the more general example of
\[
M = {\underbrace{(S^1 \times S^2) \sharp (S^1 \times S^2) \sharp \cdots \sharp (S^1 \times S^2)}_{g \text{ copies}}} \smallsetminus \mathrm{int}(D^3).
\]
Now $M$ is homotopy equivalent to a wedge of $g$ circles (labelled by $\alpha_1,\ldots,\alpha_g$) and $g$ two-spheres (labelled by $\tau_1,\ldots,\tau_g$), so the point-pushing homomorphism is of the form
\begin{equation}
\label{eq:point-pushing-homomorphism-example-g}
\pi_1(M) \cong F_g = \langle \alpha_1,\ldots,\alpha_g \rangle \longrightarrow \pi_0(\mathrm{hAut}_*(X)) \subset \langle X,X \rangle ,
\end{equation}
where $X = S^1_{\alpha_1} \vee \ldots \vee S^1_{\alpha_g} \vee S^2_{\tau_1} \vee \ldots \vee S^2_{\tau_g} \vee S^2_p$. Here $\langle \alpha_1,\ldots,\alpha_g \rangle $ denotes the free group generated by $\alpha_1,\ldots,\alpha_g$ and $\langle X,X \rangle$ denotes the monoid $\pi_0(\mathrm{Map}_*(X,X))$, as before. We would like to describe the point-pushing maps $\bar{\pi}_{\alpha_1},\ldots,\bar{\pi}_{\alpha_g}$ (the images of $\alpha_1,\ldots,\alpha_g$) as elements of this monoid.

Generalising the discussion in the previous example, suppose that $X$ is a wedge of a number of circles indexed by a set $A$ and a number of two-spheres indexed by a set $B$. We then have
\[
\pi_1(X) \cong F_A \qquad\text{and}\qquad \pi_2(X) \cong \bZ[F_A]B,
\]
where $F_A$ is the free group on the set $A$, $\bZ[F_A]$ is its integral group-ring and $\bZ[F_A]B$ is the free $\bZ[F_A]$-module on the set $B$. The underlying set of the monoid $\langle X,X \rangle$ is therefore
\[
\langle X,X \rangle \cong \prod_A F_A \times \prod_B \bZ[F_A]B.
\]
To understand the operation of composition, it is again convenient to embed this into the larger monoid $\langle \widetilde{X},\widetilde{X} \rangle$, by lifting self-maps of $X$ to self-maps of its universal cover
\[
\widetilde{X} \simeq \bigvee_{w \in F_A} \bigvee_{b \in B} S^2_{wb}.
\]
This monoid is isomorphic to the monoid $M_B(M_{F_A}^{\mathrm{vf}}(\bZ))$ of $B \times B$ block matrices whose entries are $F_A \times F_A$ integer matrices that are vertically finite (each column has only finitely many non-zero entries).

Returning to our setting (and writing $\tau_0 = p$ for notational convenience), we have $A = \{\alpha_1,\ldots,\alpha_g\}$ and $B = \{\tau_0,\ldots,\tau_g\}$, so
\begin{align}
\begin{split}
\label{eq:monoids-higher-genus}
\langle X,X \rangle &= \prod_{i=1}^g \langle \alpha_1,\ldots,\alpha_g \rangle \times \prod_{i=0}^g \bZ\langle \alpha_1^{\pm 1},\ldots,\alpha_g^{\pm 1} \rangle \{\tau_0,\ldots,\tau_g\}, \\
\langle \widetilde{X},\widetilde{X} \rangle &= M_{g+1} \bigl( M_{\langle \alpha_1,\ldots,\alpha_g \rangle}^{\mathrm{vf}}(\bZ) \bigr) 
\end{split}
\end{align}
where $\bZ\langle \alpha_1^{\pm 1},\ldots,\alpha_g^{\pm 1} \rangle$ denotes the ring of non-commutative Laurent polynomials with coefficients in $\bZ$ in the variables $\alpha_1,\ldots,\alpha_g$. The embedding of monoids $\langle X,X \rangle \hookrightarrow \langle \widetilde{X},\widetilde{X} \rangle$ is given by
\[
(w_1,\ldots,w_g,f_0,\ldots,f_g) \;\longmapsto\; \left( \begin{matrix} A_{00} & \cdots & A_{0g} \\ \vdots & \ddots & \vdots \\ A_{g0} & \cdots & A_{gg} \end{matrix} \right) ,
\]
where the matrices $A_{ij}$ are determined as follows. First, consider $w=(w_1,\ldots,w_g)$ as the endomorphism of $\langle \alpha_1,\ldots,\alpha_g \rangle$ that sends the letter $\alpha_i$ to the word $w_i$. Each matrix $A_{ij}$ is ``diagonally constant of slope $-w$'', in the sense that if we write $A_{ij} = (a_{u,v})_{u,v \in \langle \alpha_1,\ldots,\alpha_g \rangle}$, then $a_{u,v} = a_{u.w(v)^{-1},1}$. In particular, each of these matrices is determined by its $1$st column. Finally, the $1$st columns of each of these matrices are determined by setting $(a_{u,1})_{ij}$ equal to the coefficient of $u\tau_i$ in $f_j$.

We may now describe the point-pushing maps $\bar{\pi}_{\alpha_1},\ldots,\bar{\pi}_{\alpha_g}$ and their inverses under the identifications \eqref{eq:monoids-higher-genus}. Namely, we have
\begin{align*}
\bar{\pi}_{\alpha_i} &= (\alpha_1,\ldots,\alpha_g , \alpha_i \tau_0 , \tau_1 ,\ldots, \tau_{i-1} , \tau_i + \tau_0 , \tau_{i+1} , \ldots, \tau_g) \\
\bar{\pi}_{\alpha_i^{-1}} &= (\alpha_1,\ldots,\alpha_g , \alpha_i^{-1} \tau_0 , \tau_1 ,\ldots, \tau_{i-1} , \tau_i - \alpha_i^{-1} \tau_0 , \tau_{i+1} , \ldots, \tau_g)
\end{align*}
as elements of $\langle X,X \rangle$, and
\begin{equation}
\label{eq:matrices-for-alpha-i}
\bar{\pi}_{\alpha_i} = \left( \begin{matrix}
I^{(\alpha_i)} & \cdots & I & \cdots \\
\vdots & \ddots & \vdots & \\
0 & \cdots & I & \cdots \\
\vdots & & \vdots & \ddots
\end{matrix} \right)
\qquad
\bar{\pi}_{\alpha_i^{-1}} = \left( \begin{matrix}
I^{(\alpha_i^{-1})} & \cdots & -I^{(\alpha_i^{-1})} & \cdots \\
\vdots & \ddots & \vdots & \\
0 & \cdots & I & \cdots \\
\vdots & & \vdots & \ddots
\end{matrix} \right)
\end{equation}
as elements of $\langle \widetilde{X},\widetilde{X} \rangle$, where unspecified entries agree with the identity matrix, and $A^{(w)}$ denotes the result of shifting the matrix $A$ vertically by $w$, in other words, if $A=(a_{u,v})$ and $A^{(w)} = (b_{u,v})$, then $b_{u,v} = a_{u.w^{-1},v}$.

From this description, we deduce a formula for $\bar{\pi}_w$ for any word $w$ in the generators $\alpha_1,\ldots,\alpha_g$. Note that we are \emph{not} assuming that the word $w$ is reduced.

\begin{prop}
Let $w$ be a word in the generators $\alpha_1,\ldots,\alpha_g$. Then
\[
\bar{\pi}_w = (\alpha_1,\ldots,\alpha_g , w\tau_0 , \tau_1 + f_1(w)\tau_0 , \tau_2 + f_2(w)\tau_0 , \ldots , \tau_g + f_g(w)\tau_0)
\]
as an element of $\langle X,X \rangle$, and
\[
\bar{\pi}_w = \left( \begin{matrix}
I^{(w)} & A_1(w) & A_2(w) & \cdots & A_g(w) \\
0 & I & 0 & \cdots & 0 \\
0 & 0 & I & \cdots & 0 \\
\vdots & \vdots & \vdots & \ddots & \vdots \\
0 & 0 & 0 & \cdots & I
\end{matrix} \right)
\]
as an element of $\langle \widetilde{X},\widetilde{X} \rangle$, where the non-commutative Laurent polynomials $f_i(w)$ and matrices $A_i(w)$ are defined as follows. Write
\[
w = w_1 \alpha_i^{\epsilon_1} w_2 \alpha_i^{\epsilon_2} \cdots w_\ell \alpha_i^{\epsilon_\ell} w_{\ell + 1},
\]
where the $w_j$ are words not involving $\alpha_i^{\pm 1}$ and $\epsilon_j \in \{\pm 1\}$ and let $\bar{w}_j$ be the initial subword
\[
\bar{w}_j = \begin{cases}
w_1 \alpha_i^{\epsilon_1} w_2 \alpha_i^{\epsilon_2} \cdots w_j & \text{if } \epsilon_j = +1 \\
w_1 \alpha_i^{\epsilon_1} w_2 \alpha_i^{\epsilon_2} \cdots w_j \alpha_i^{-1} & \text{if } \epsilon_j = -1
\end{cases}
\]
of $w$. Then
\[
f_i(w) = \sum_{j=1}^\ell \epsilon_j \bar{w}_j
 \qquad\text{and}\qquad 
 A_i(w) = \sum_{j=1}^\ell \epsilon_j I^{(\bar{w}_j)}.
\]
\end{prop}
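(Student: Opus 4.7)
The plan is to proceed by induction on the length $n$ of the word $w = u_1 u_2 \cdots u_n$, where each letter $u_s$ lies in $\{\alpha_k^{\pm 1} : 1 \leq k \leq g\}$. Since the point-pushing assignment $\bar{\pi}$ is a group homomorphism, composition in the monoid $\langle X, X \rangle$ gives $\bar{\pi}_w = \bar{\pi}_{u_1} \circ \bar{\pi}_{u_2} \circ \cdots \circ \bar{\pi}_{u_n}$, and this will be the engine of the induction. The base case $n = 0$ is immediate: $\bar{\pi}_e = \mathrm{id}$ is the identity matrix, while the empty sums give $f_i(e) = 0$ and $A_i(e) = 0$, in agreement.

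For the inductive step, I will assume the claimed formula for some word $v$ and compute the block matrix product $\bar{\pi}_v \bar{\pi}_{\alpha_k^\epsilon}$ using the explicit expressions \eqref{eq:matrices-for-alpha-i} for the generators. Both factors equal the identity outside of the top row of blocks (apart from the diagonal), so the same holds for the product, and only the $(0, 0)$ and $(0, k)$ blocks can change relative to $\bar{\pi}_v$. Using the easily-verified identity $I^{(v_1)} I^{(v_2)} = I^{(v_1 v_2)}$, the $(0, 0)$ block of the product is $I^{(v \alpha_k^\epsilon)} = I^{(w)}$, as required, and the $(0, k)$ block changes from $A_k(v)$ to $A_k(v) + I^{(v)}$ when $\epsilon = +1$ and to $A_k(v) - I^{(v \alpha_k^{-1})} = A_k(v) - I^{(w)}$ when $\epsilon = -1$.

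The remaining task is to check that these matrix-level changes match the combinatorial definitions of $A_j(w)$. For $j \neq k$, appending $\alpha_k^\epsilon$ only lengthens the final tail $w_{\ell+1}$ in the $\alpha_j$-decomposition of $v$, so all $\epsilon_s$ and $\bar{w}_s$ are unchanged and $A_j(w) = A_j(v)$. For $j = k$, a single new occurrence $\alpha_k^\epsilon$ is appended at the end, with new tail $w_{\ell+1}$ equal to the empty word; unpacking the definition of $\bar{w}_\ell$ gives $\bar{w}_\ell = v$ in the $\epsilon = +1$ case and $\bar{w}_\ell = v \alpha_k^{-1} = w$ in the $\epsilon = -1$ case, so the new contribution $\epsilon_\ell \cdot I^{(\bar{w}_\ell)}$ agrees precisely with the change in the $(0, k)$ block computed from the matrix product. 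The description of $\bar{\pi}_w$ as an element of $\langle X, X \rangle$ then follows by reading off the coefficients of $u\tau_j$ from the top-row blocks via the embedding $\langle X, X \rangle \hookrightarrow \langle \widetilde{X}, \widetilde{X} \rangle$ set up earlier in the section.

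I expect the main obstacle to be purely combinatorial bookkeeping: carefully tracking how the $\alpha_j$-decomposition of $v \alpha_k^\epsilon$ relates to that of $v$, and in particular correctly locating the new $\bar{w}_\ell$ when $j = k$, where the convention in the $\epsilon = -1$ case requires including the appended $\alpha_k^{-1}$ letter in the initial subword. No deeper ideas are needed; once this appending lemma is set up carefully, the inductive step reduces to a routine matrix computation.
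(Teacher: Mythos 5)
Your proposal is correct and takes essentially the same approach as the paper: an induction that verifies the claimed matrix form of $\bar{\pi}_w$ by multiplying out the generator matrices from \eqref{eq:matrices-for-alpha-i} and matching the resulting $(0,j)$ blocks against the combinatorial definition of $A_j(w)$. The only cosmetic difference is that you induct on total word length and append one letter at a time, whereas the paper inducts (separately for each $i$) on the number of $\alpha_i^{\pm 1}$ occurrences and isolates the cocycle identity $A_i(w'w'') = A_i(w') + I^{(w')}A_i(w'')$ as a reusable lemma; your version absorbs that identity into the direct check of the single-letter appending step.
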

\begin{proof}
The description of $\bar{\pi}_w$ as an element of $\langle X,X \rangle$ will follow from its description as an element of $\langle \widetilde{X},\widetilde{X} \rangle$ via the embedding of monoids described earlier, so we only have to prove the latter. Multiplying out the matrices \eqref{eq:matrices-for-alpha-i} corresponding to the letters of the word $w$, it is clear that $\bar{\pi}_w$ is of the form
\[
\left( \begin{matrix}
I^{(w)} & \mathrm{?}_1(w) & \mathrm{?}_2(w) & \cdots & \mathrm{?}_g(w) \\
0 & I & 0 & \cdots & 0 \\
0 & 0 & I & \cdots & 0 \\
\vdots & \vdots & \vdots & \ddots & \vdots \\
0 & 0 & 0 & \cdots & I
\end{matrix} \right) ,
\]
so we just have to verify that $\mathrm{?}_i(w) = A_i(w)$. We first note that, directly from the definition, the matrices $A_i(w)$ have the following property: if $w = w' w''$, then
\begin{equation}
\label{eq:property-of-Ai}
A_i(w) = A_i(w') + I^{(w')} A_i(w'').
\end{equation}
We also observe from \eqref{eq:matrices-for-alpha-i} that the equality $\mathrm{?}_i(w) = A_i(w)$ is true if $w$ is the letter $\alpha_i$ or its inverse.

We now prove that $\mathrm{?}_i(w) = A_i(w)$ by induction on the number of letters of $w$ that are equal to $\alpha_i$ or $\alpha_i^{-1}$. If this is zero, i.e.~if $w$ does not contain $\alpha_i^{\pm 1}$, then $A_i(w)=0$ and also $\mathrm{?}_i(w) = 0$, since it is the $(0,i)$ entry in a product of matrices that each have the property that their $i$th rows and columns agree with the identity matrix. This establishes the base case. If there are $\ell \geq 1$ letters of $w$ that are equal to $\alpha_i$ or $\alpha_i^{-1}$, then we may write $w = w' \alpha_i^{\epsilon} w''$, where $w''$ does not contain $\alpha_i^{\pm 1}$. Applying the base case to $w''$, the inductive hypothesis to $w'$ and using the observation above, we already know that
\[
\mathrm{?}_i(w'') = 0 = A_i(w'') \qquad \mathrm{?}_i(w') = A_i(w') \qquad \mathrm{?}_i(\alpha_i^\epsilon) = A_i(\alpha_i^\epsilon).
\]
Writing just the rows and columns indexed by $0$ and $i$, we therefore have
\begingroup
\setlength\arraycolsep{2pt}
\begin{align*}
\left( \begin{matrix}
I^{(w)} & \cdots & \mathrm{?}_i(w) \\
\vdots & \ddots & \vdots & \\
0 & \cdots & I
\end{matrix} \right)
&=
\left( \begin{matrix}
I^{(w')} & \cdots & \mathrm{?}_i(w') \\
\vdots & \ddots & \vdots & \\
0 & \cdots & I
\end{matrix} \right)
\left( \begin{matrix}
I^{(\alpha_i^\epsilon)} & \cdots & \mathrm{?}_i(\alpha_i^\epsilon) \\
\vdots & \ddots & \vdots & \\
0 & \cdots & I
\end{matrix} \right)
\left( \begin{matrix}
I^{(w'')} & \cdots & \mathrm{?}_i(w'') \\
\vdots & \ddots & \vdots & \\
0 & \cdots & I
\end{matrix} \right)
\\
&=
\left( \begin{matrix}
I^{(w')} & \cdots & A_i(w') \\
\vdots & \ddots & \vdots & \\
0 & \cdots & I
\end{matrix} \right)
\left( \begin{matrix}
I^{(\alpha_i^\epsilon)} & \cdots & A_i(\alpha_i^\epsilon) \\
\vdots & \ddots & \vdots & \\
0 & \cdots & I
\end{matrix} \right)
\left( \begin{matrix}
I^{(w'')} & \cdots & A_i(w'') \\
\vdots & \ddots & \vdots & \\
0 & \cdots & I
\end{matrix} \right)
\\
&=
\left( \begin{matrix}
I^{(w)} & \cdots & A_i(w) \\
\vdots & \ddots & \vdots & \\
0 & \cdots & I
\end{matrix} \right) ,
\end{align*}
\endgroup
where we apply the identity \eqref{eq:property-of-Ai} twice to deduce the final equality.
\end{proof}

In particular, we note that the coefficient of the generator $p=\tau_0$ in the middle component of $\bar{\pi}_w$ is exactly $w \in F_g \subseteq \bZ[F_g] = \bZ\langle \alpha_1^{\pm 1} , \ldots , \alpha_g^{\pm 1} \rangle$. This implies that the point-pushing homomorphism \eqref{eq:point-pushing-homomorphism-example-g} is injective.
\end{eg}

The two examples above go through identically if $S^1 \times S^2$ is replaced with $S^1 \times S^{d-1}$ for any $d\geq 3$; we obtain the same formulas for the point-pushing maps $\bar{\pi}_{\alpha}$ and the point-pushing homomorphism $\alpha \mapsto \bar{\pi}_{\alpha}$ is injective. Thus we have seen that, for any manifold of the form
\[
M = M_{g,1}^d = {\underbrace{(S^1 \times S^{d-1}) \sharp (S^1 \times S^{d-1}) \sharp \cdots \sharp (S^1 \times S^{d-1})}_{g \text{ copies}}} \smallsetminus \mathrm{int}(D^d)
\]
for $d \geq 3$ and $g\geq 0$, the point-pushing homomorphism
\begin{equation}
\label{eq:point-pushing-homomorphism}
\mathrm{push}_M \colon \pi_1(M) \too \pi_0(\mathrm{Homeo}_*(M \smallsetminus *)) \too \pi_0(\mathrm{hAut}_*(M \vee S^{d-1}))
\end{equation}
is injective. 
For $d=2$ this is also true: Recall  the point-pushing homomorphism is part of the Birman exact sequence \cite{Birman1969}:
\[
1 \to \pi_1(M_{g,1}^2) = F_{2g} \longrightarrow \Gamma_{g,1}^1 \longrightarrow \Gamma_{g,1} \to 1.
\]

In the next section, we put these facts into context by discussing the kernel of the point-pushing map more generally and for any number of configuration points.

%%%%%%%%%%%%%%%%%%%%%%%%%%%%%%%%%%%%%%%%%%%
%%%%%%%%%%%%%%%%%%%%%%%%%%%%%%%%%%%%%%%%%%%
\section{The kernel of the point-pushing map}
\label{s:kernel-point-pushing}
%%%%%%%%%%%%%%%%%%%%%%%%%%%%%%%%%%%%%%%%%%%
%%%%%%%%%%%%%%%%%%%%%%%%%%%%%%%%%%%%%%%%%%%

Let $M$ be a smooth, connected manifold of dimension $d \geq 3$ and fix a ball $D \subset M$ in the interior of $M$ containing the base configuration $z$. This determines an identification \eqref{equation:wreath-product} of $\pi_1(C_k(M))$ with the semi-direct product $\pi_1(M)^k \rtimes \Sigma_k$. For $\mathrm{Cat} \in \{ \mathrm{Diff} , \mathrm{Homeo} , \mathrm{hAut} \}$, recall from \S\ref{s:point-pushing-actions} that the point-pushing map
\begin{equation}
\label{eq:point-pushing-general}
p_k \colon \pi_1(C_k(M)) \longrightarrow \pi_0(\mathrm{Cat}(M,z))
\end{equation}
is the monodromy of the bundle $C_{k,1}(M) \to C_k(M)$, viewed either as a smooth bundle, a topological bundle or a Serre fibration.\footnote{In \S\ref{s:point-pushing-actions}, we focused on the $\mathrm{Cat} = \mathrm{hAut}$ setting, but the $\mathrm{Cat} = \mathrm{Homeo}$ and $\mathrm{Cat} = \mathrm{Diff}$ settings are exactly parallel.}

Except when $\mathrm{Cat} = \mathrm{hAut}$ and $k \geq 2$, this may equivalently be described as a connecting homomorphism in the long exact sequence of the fibration $\mathrm{Cat}(M) \to C_k(M)$ taking an automorphism $\varphi$ to its evaluation $\varphi(z)$ at the base configuration $z$.  (Note that such a description is impossible for $\mathrm{Cat} = \mathrm{hAut}$ and $k \geq 2$, since homotopy automorphisms need not be injective, so there is no well-defined map $\mathrm{hAut}(M) \to C_k(M)$ in this case.) Thus if $\mathrm{Cat} \in \{\mathrm{Diff} , \mathrm{Homeo}\}$, or if $\mathrm{Cat} = \mathrm{hAut}$ and $k=1$, the point-pushing map fits into an exact sequence of the form
\[
1 \longrightarrow \mathrm{ker}(p_k) \longrightarrow \pi_1(C_k(M)) \longrightarrow \pi_0(\mathrm{Cat}(M,z)) \longrightarrow \pi_0(\mathrm{Cat}(M)) \longrightarrow 1.
\]

When the boundary $\partial M$ is non-empty, the point-pushing map \eqref{eq:point-pushing-general} factors as
\begin{equation}
\label{eq:point-pushing-A}
p_k \colon \pi_1(C_k(M)) \longrightarrow \pi_0(\mathrm{Cat}_\partial(M,z))
\end{equation}
followed by $\pi_0(\mathrm{Cat}_\partial(M,z)) \to \pi_0(\mathrm{Cat}(M,z))$, where $\mathrm{Cat}_\partial(M) \subseteq \mathrm{Cat}(M)$ denotes the subspace of Cat-automorphisms of $M$ that fix $\partial M$ pointwise.

Despite the differences between the categories of $\mathrm{Diff}$ and $\mathrm{Homeo}$ on the one hand and $\mathrm{hAut}$ on the other, the following results hold for all three. Note though that $\mathrm{ker}(p_1)$ and $\mathrm{ker}(p_k)$ may be different groups for the three different categories.

\begin{prop}[{See also \cite[Lemmas 2.4 and 2.5]{Banks2017}.}]
\label{p:kernel-of-p1}
Let $k=1$. Then
\[
\mathrm{ker}(p_1) \subseteq Z(\pi_1(M)),
\]
i.e.~the kernel of \eqref{eq:point-pushing-general} is contained in the centre of $\pi_1(M)$. If $\partial M \neq \varnothing$, then \eqref{eq:point-pushing-A} is injective.
\end{prop}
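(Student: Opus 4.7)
The plan is to use the description of $p_1$ (for $k=1$) as the connecting homomorphism of the fibration $\mathrm{Cat}(M) \to \mathring M$ given by evaluation at $z$, which is recalled in the excerpt just before the proposition. Under this description, every element of $\mathrm{ker}(p_1)$ is represented by a loop of the form $\alpha(t) = \phi_t(z)$, where $\phi \colon [0,1] \to \mathrm{Cat}(M)$ is a loop based at the identity (and lands in $\mathrm{Cat}_\partial(M)$ in the boundary-fixing setting). Both parts of the proposition will then drop out by forming suitable square homotopies $[0,1]^2 \to M$ from $\phi$.

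For the centrality assertion, given any $\beta \in \pi_1(M,z)$ represented by a loop $\beta \colon [0,1] \to M$, I would consider
\[
H \colon [0,1]^2 \longrightarrow M, \qquad H(s,t) = \phi_t(\beta(s)).
\]
Because $\phi_0 = \phi_1 = \mathrm{id}$ and $\beta(0) = \beta(1) = z$, tracing the boundary of $[0,1]^2$ counterclockwise from $(0,0)$ yields the concatenation $\beta \cdot \alpha \cdot \beta^{-1} \cdot \alpha^{-1}$. Since $H$ extends this loop across the square, the commutator is nullhomotopic, so $[\alpha,\beta] = 1$ in $\pi_1(M,z)$; as $\beta$ was arbitrary, this places $\alpha$ in $Z(\pi_1(M))$.

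For the injectivity statement when $\partial M \neq \varnothing$, I would pick a basepoint $* \in \partial M$ and a path $\gamma \colon [0,1] \to M$ from $*$ to $z$, and run the same trick with $\gamma$ in place of $\beta$. Since $\phi_t \in \mathrm{Cat}_\partial(M)$ fixes $\partial M$ pointwise, the left edge $H(0,t) = \phi_t(*) = *$ of the new square $H(s,t) = \phi_t(\gamma(s))$ is constant, while the right edge is $\alpha$ and the top and bottom edges are both $\gamma$. The boundary, now based at $*$, therefore reads $\gamma \cdot \alpha \cdot \gamma^{-1}$, which bounds $H$ and is thus nullhomotopic in $M$; conjugating back by $\gamma^{-1}$ gives $\alpha = 1$ in $\pi_1(M,z)$. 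The only genuinely subtle point, and the main thing to verify, is that this argument applies uniformly to all three choices of $\mathrm{Cat}$: it uses only the continuity of the evaluation $(t,x) \mapsto \phi_t(x)$, which holds in each setting, together with the fact --- recorded in the excerpt --- that for $k=1$ the point-pushing map really does arise as the connecting homomorphism of a long exact sequence of homotopy groups in each of the three cases.
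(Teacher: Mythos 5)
Your proof is correct and follows essentially the same route as the paper: you identify $\ker(p_1)$ with the image of $\pi_1(\mathrm{Cat}(M)) \to \pi_1(M)$ induced by evaluation, then establish centrality and (in the boundary-fixing case) triviality of that image. The only difference is that you prove the needed facts inline via the square homotopies $H(s,t)=\phi_t(\beta(s))$ and $H(s,t)=\phi_t(\gamma(s))$, whereas the paper cites the centrality statement as a lemma (Hatcher's exercise) and phrases the second step as ``evaluation at $z$ is homotopic, along a path to $\partial M$, to a constant map''; both are the same argument, yours just unpacked.
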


\begin{prop}
\label{p:kernel-of-pk}
In general, we have that
\[
\mathrm{ker}(p_k) = \Delta(\mathrm{ker}(p_1)),
\]
i.e.~the kernel of \eqref{eq:point-pushing-general} is equal to the diagonal of $\mathrm{ker}(p_1)^k \subseteq \pi_1(M)^k \subseteq \pi_1(C_k(M))$, where we use the identification of $\pi_1(C_k(M))$ with $\pi_1(M)^k \rtimes \Sigma_k$ fixed above. If $\partial M \neq \varnothing$, then \eqref{eq:point-pushing-A} is injective.
\end{prop}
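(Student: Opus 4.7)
The second assertion—that \eqref{eq:point-pushing-A} is injective when $\partial M \neq \varnothing$—is an immediate consequence of the main identity combined with Proposition~\ref{p:kernel-of-p1}, which says precisely that \eqref{eq:point-pushing-A} is injective for $k=1$ in that case: once $\ker(p_1) = 1$, the main identity gives $\ker(p_k) = \Delta(1) = 1$. My plan therefore reduces to proving $\ker(p_k) = \Delta(\ker(p_1))$, which I would establish by the two inclusions.

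For the inclusion $\Delta(\ker(p_1)) \subseteq \ker(p_k)$: given $\alpha \in \ker(p_1)$, I would realise $\Delta(\alpha) = (\alpha, \ldots, \alpha; \mathrm{id})$ as the loop in $C_k(M)$ in which all $k$ points travel simultaneously along $k$ pairwise disjoint embedded representatives of $\alpha$ (possible since $d \geq 3$). By Lemma~\ref{l:geometric-point-pushing} the associated point-pushing automorphism decomposes as a composition $\varphi_1 \cdots \varphi_k$ of pairwise commuting factors supported in disjoint tubular neighbourhoods, each a copy of the point-pushing automorphism of $\alpha$; triviality of the single-point case propagates to triviality of the composition. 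In the $\mathrm{hAut}$ setting Lemma~\ref{l:reduction-to-one-sphere} makes this very explicit: $\pi_{\alpha^{(i)}} = \bar{\pi}_\alpha \vee \mathrm{id}_{W_{k-1}}$, so a based homotopy $\bar{\pi}_\alpha \simeq \mathrm{id}$ extends wedge-summand-wise to a based homotopy $\pi_{\alpha^{(i)}} \simeq \mathrm{id}$, and hence $\pi_{\Delta(\alpha)} \simeq \mathrm{id}$ on $M \vee W_k$.

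For the reverse inclusion $\ker(p_k) \subseteq \Delta(\ker(p_1))$, in the $\mathrm{Diff}$ and $\mathrm{Homeo}$ categories I would use the evaluation fibration $\mathrm{Cat}(M, z) \hookrightarrow \mathrm{Cat}(M) \to C_k(M)$, whose connecting homomorphism is $p_k$. This gives $\ker(p_k) = \mathrm{image}(\pi_1(\mathrm{Cat}(M)) \to \pi_1(C_k(M)))$, and the key step is to check that this map factorises as $\pi_1(\mathrm{Cat}(M)) \to \pi_1(M) \xrightarrow{\Delta} \pi_1(C_k(M))$: a loop $\beta$ in $\mathrm{Cat}(M)$ based at the identity produces $k$ loops $t \mapsto \beta(t)(z_i)$ in $M$, all of which become based-homotopic to $t \mapsto \beta(t)(z_0)$ after collapsing $B \supset z$ to a point (via the homotopy $(s, t) \mapsto \beta(t)(\gamma_i(s))$ for paths $\gamma_i \subset B$ from $z_0$ to $z_i$), so under the isomorphism $\Upsilon$ of \eqref{equation:wreath-product} its image is $\Delta$ of the corresponding element of $\pi_1(M)$. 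The main obstacle is the $\mathrm{hAut}$ case, where the evaluation fibration is unavailable. Here I would combine the explicit formulas of Sections~\ref{s:symmetric}--\ref{s:loop} with collapse-retractions: the induced action of $(\alpha_1, \ldots, \alpha_k; \sigma)$ on $\pi_{d-1}(W_k) \cong \mathbb{Z}^k$ sends $e_i$ to $\mathrm{sgn}(\alpha_{\sigma(i)}) e_{\sigma(i)}$, forcing $\sigma = \mathrm{id}$ and every $\mathrm{sgn}(\alpha_i) = +1$; composing with the retractions $M \vee W_k \twoheadrightarrow M \vee S^{d-1}_i$ then isolates $\bar{\pi}_{\alpha_i}$ as the monodromy on the $i$-th summand, so each $\alpha_i$ individually lies in $\ker(p_1^{\mathrm{hAut}})$. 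The final equality $\alpha_1 = \cdots = \alpha_k$ is the subtle step, which I expect to extract from a finer analysis of how the loop generators $\alpha_i^{(i)}$ interact in the composition, for instance by tracking the induced action on $\pi_{d-1}$ of the universal cover of $M \vee W_k$ as a $\pi_1(M)$-module.
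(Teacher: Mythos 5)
Your treatment of the $\mathrm{Diff}/\mathrm{Homeo}$ reverse inclusion is correct and in fact a little cleaner than the paper's. The observation that the evaluation map $\pi_1(\mathrm{Cat}(M)) \to \pi_1(C_k(M))$ factors through the diagonal $\Delta\colon \pi_1(M) \to \pi_1(M)^k \rtimes \Sigma_k$ — via the homotopy $(s,t)\mapsto\beta(t)(\gamma_i(s))$ inside the ball $B$ — immediately yields $\ker(p_k) = \Delta\bigl(\operatorname{im}(\pi_1(\mathrm{Cat}(M))\to\pi_1(M))\bigr) = \Delta(\ker(p_1))$, \emph{both} inclusions in one shot. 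The paper instead shows $\ker(p_k) \subseteq Z(\pi_1(M)^k\rtimes\Sigma_k)$ by applying Lemma~\ref{Hatcher-exercise} to $\mathrm{hAut}(C_k(M))$ and then recovers $\ker(p_1)$ by projecting to one factor; your route sidesteps the centre argument entirely. You should realise, incidentally, that this also subsumes your separate geometric argument for the forward inclusion in those two categories.

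That separate geometric argument, however, has a genuine gap, and the gap is exactly what makes the $\mathrm{hAut}$ case hard. You claim that ``triviality of the single-point case propagates to triviality of the composition'': but a homotopy (or isotopy) witnessing $p_1(\alpha)=1$ is a homotopy of $(M,z_1)$ fixing only $z_1$, and there is no reason for it to fix $z_2,\dots,z_k$ or to restrict to a homotopy of $M\smallsetminus z$. Thus from $\alpha\in\ker(p_1)$ you cannot simply read off a based homotopy $\bar\pi_\alpha\simeq\mathrm{id}$ on $M\vee S^{d-1}$ that you then extend wedge-summand-wise. The paper bridges this precisely by introducing an embedded interval $I\supset z$, lifting the push of $\Delta(a_1)$ to an honest diffeomorphism in $\mathrm{hAut}_I(M)$, and invoking Lemma~\ref{lem:inclusion-hAut-weq} (that $\mathrm{hAut}_I(M)\hookrightarrow\mathrm{hAut}(M,z_1)$ is a weak equivalence, hence injective on $\pi_0$) to transport the nullhomotopy. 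Without some device of this kind your ``propagation'' step is unjustified.

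The reverse inclusion in the $\mathrm{hAut}$ setting you acknowledge as incomplete, and indeed the piece you flag (``$\alpha_1=\dots=\alpha_k$'') is the crux. Be warned also that the intermediate step is shakier than it looks: post-composing with the retraction $M\vee W_k\twoheadrightarrow M\vee S^{d-1}_i$ does not cleanly isolate $\bar\pi_{\alpha_i}$, since when $M$ has maximal handle dimension each $\bar\pi_{\alpha_j}$ for $j\neq i$ also sends the $M$ summand into $S^{d-1}_j$ (via $\overline{\pitchfork}_{\alpha_j}$), and these contributions do not die under that retraction in the composite. The paper's argument here is quite different and more geometric: reduce to an element $b=(b_1,\dots,b_k)\in\ker(p_k)$ with $b_1=1$ (using the already-established inclusion $\Delta(\ker(p_1))\subseteq\ker(p_k)$), choose embedded arcs $A_i$ from $z_1$ to $z_i$ and a geometric push representative of $p_k(b)$ supported in disjoint tubular neighbourhoods of the loops $b_i$, and then read off from a nullhomotopy of $p_k(b)$ relative to $z$ that $b_i\simeq A_i^{-1}\cdot p_k(b)(A_i)\simeq *$. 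You would need an argument of comparable strength; a $\pi_{d-1}$-of-universal-cover calculation might in principle work, but you haven't produced it, and it would need to run for arbitrary $M$ (not just the wedge-of-spheres examples of \S\ref{s:examples}).

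Finally, a small remark on the boundary case: you deduce injectivity of \eqref{eq:point-pushing-A} from $\ker(p_1)=1$ and ``the main identity'', but that identity as stated is for the unrestricted map \eqref{eq:point-pushing-general}; one must re-run the whole argument with $\mathrm{Cat}_\partial$ in place of $\mathrm{Cat}$ throughout, as the paper notes at the end of its proof. This is routine, but it is a step, not an immediate consequence.
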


The first proposition is an immediate consequence of the following basic lemma.

\begin{lem}[{\cite[page 40]{Hatcher2002}}]
\label{Hatcher-exercise}
For any space $X$, the image of the map $\pi_1(\mathrm{hAut}(X)) \to \pi_1(X)$ induced by evaluation at some point $x \in X$ has image contained in the centre $Z(\pi_1(X))$.
\end{lem}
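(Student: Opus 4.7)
The plan is to give the standard Eckmann–Hilton style argument: represent a class in $\pi_1(\mathrm{hAut}(X),\mathrm{id}_X)$ by a loop of self-maps $\gamma \colon [0,1] \to \mathrm{hAut}(X)$ with $\gamma(0)=\gamma(1)=\mathrm{id}_X$, so its image under evaluation at $x$ is the loop $\beta(t) = \gamma(t)(x)$ in $X$. I then want to show that for an arbitrary loop $\alpha$ at $x$ representing an arbitrary element of $\pi_1(X,x)$, the loops $\alpha$ and $\beta$ commute.

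The key step is to build a single map out of the square, namely
\[
H \colon [0,1] \times [0,1] \longrightarrow X, \qquad H(s,t) = \gamma(t)(\alpha(s)).
\]
This is continuous because the adjoint $\gamma \colon [0,1] \to \mathrm{Map}(X,X)$ is continuous in the compact–open topology. Reading off the four sides of the square: the two horizontal sides ($t=0,1$) both equal $\alpha$, since $\gamma(0)=\gamma(1)=\mathrm{id}_X$, and the two vertical sides ($s=0,1$) both equal $\beta$, since $\alpha(0)=\alpha(1)=x$. Thus $H$ exhibits a basepoint-preserving nullhomotopy of the boundary loop $\alpha\cdot\beta\cdot\alpha^{-1}\cdot\beta^{-1}$, which gives $[\alpha][\beta]=[\beta][\alpha]$ in $\pi_1(X,x)$. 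Since $[\alpha]$ was arbitrary, $[\beta]\in Z(\pi_1(X,x))$, and as the class of $\gamma$ in $\pi_1(\mathrm{hAut}(X),\mathrm{id}_X)$ was arbitrary, the image of the evaluation map lies in the centre.

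There is essentially no obstacle in this argument: the invertibility condition on the elements of $\mathrm{hAut}(X)$ plays no role, and even the compact–open topology only enters via the exponential law ensuring the joint continuity of $H$. The only minor subtlety to mention explicitly is the choice of basepoint, namely that we fix the basepoint of $\mathrm{hAut}(X)$ to be $\mathrm{id}_X$ so that the evaluation map $\mathrm{hAut}(X) \to X$, $\varphi\mapsto\varphi(x)$, is pointed and induces the map on $\pi_1$ whose image we are analysing.
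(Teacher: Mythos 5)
The paper gives no proof of this lemma; it is stated as a citation to Hatcher's book (it is essentially Exercise 1.1.18 there), so there is no in-paper argument to compare against. Your Eckmann--Hilton square argument is the standard proof of this fact and is correct: the map $H(s,t) = \gamma(t)(\alpha(s))$ has boundary $\alpha$ on the two horizontal sides and $\beta$ on the two vertical sides, and the square lemma yields $[\alpha][\beta]=[\beta][\alpha]$. The one point worth spelling out slightly more carefully is the joint continuity of $H$: rather than invoking an adjunction involving $\mathrm{Map}(X,X)$ (which would require $X$ locally compact), note that precomposition with $\alpha$ gives a continuous map $\mathrm{Map}(X,X)\to\mathrm{Map}([0,1],X)$, so $\gamma$ yields a continuous $[0,1]\to\mathrm{Map}([0,1],X)$, and the exponential law for the locally compact Hausdorff source $[0,1]$ then produces $H$ as a continuous map $[0,1]\times[0,1]\to X$; this works for arbitrary $X$, as the lemma requires. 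Your observation that invertibility of the self-maps is irrelevant is also correct --- the same argument shows the image of $\pi_1(\mathrm{Map}(X,X),\mathrm{id}_X)\to\pi_1(X,x)$ is central.
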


\begin{proof}[Proof of Proposition \ref{p:kernel-of-p1}]
By the long exact sequence, the kernel of \eqref{eq:point-pushing-general} is equal to the image of the map $\pi_1(\mathrm{Cat}(M)) \to \pi_1(M)$ induced by evaluation at the point $z_1 \in M$. The first statement then follows from Lemma \ref{Hatcher-exercise}. Similarly, the kernel of \eqref{eq:point-pushing-A} is equal to the image of the map on $\pi_1$ induced by the evaluation map $\mathrm{Cat}_\partial(M) \to M$ at $z_1 \in M$. But evaluation at $z_1$ is homotopic to evaluation at some point in $\partial M \neq \varnothing$ (since $M$ is path-connected), so this map is nullhomotopic.
\end{proof}

\begin{proof}[Proof of Proposition \ref{p:kernel-of-pk} in the smooth or topological setting.]
In this proof we assume that $\mathrm{Cat} \in \{ \mathrm{Diff} , \mathrm{Homeo} \}$, and we use the long exact sequence into which the point-pushing map \eqref{eq:point-pushing-general} fits. We give a separate proof in the setting $\mathrm{Cat} = \mathrm{hAut}$ further below. That proof also works in the smooth or topological category, but it is more involved, so we give a more geometric proof in these categories first.

Using the long exact sequence and the identification of $\pi_1(C_k(M))$ with $\pi_1(M)^k \rtimes \Sigma_n$, we have a diagram
\begin{equation}
\label{eq:kernel-of-pk-proof}
\begin{tikzcd}
\pi_1(\mathrm{Cat}(M)) \ar[rr] \ar[d] && \pi_1(M)^k \rtimes \Sigma_k \ar[rr,"p_k"] \ar[drr] && \pi_0(\mathrm{Cat}(M,z)) \ar[d] \\
\pi_1(\mathrm{hAut}(C_k(M))) \ar[urr] &&&& \Sigma_k
\end{tikzcd}
\end{equation}
whose top row is exact and right vertical map records the permutation of $z$ induced by the automorphism. It follows from the right-hand side of this diagram that $\mathrm{ker}(p_k)$ is contained in $\pi_1(M)^k$. 
A diffeomorphism or homeomorphism $\phi$ of $M$ induces a diffeomorphism or homeomorphism -- in particular a homotopy automorphism -- of $C_k(M)$, and so the top left arrow factors through $\pi_1(\mathrm{hAut}(C_k(M)) \to \pi_1(M)^k \rtimes \Sigma_k$. From Lemma \ref{Hatcher-exercise} it thus follows that $\mathrm{ker}(p_k)$ is contained in the centre $Z(\pi_1(M)^k \rtimes \Sigma_k)$. Together with the fact that $\mathrm{ker}(p_k) \subseteq \pi_1(M)^k$, we deduce that it is contained in the diagonal copy of $Z(\pi_1(M))$ in $Z(\pi_1(M))^k \subseteq \pi_1(M)^k \subseteq \pi_1(M)^k \rtimes \Sigma_k$. (Except when $k=2$ and $\pi_1(M)=1$, the centre $Z(\pi_1(M)^k \rtimes \Sigma_k)$ is precisely this diagonal copy of $Z(\pi_1(M))$. On the other hand, in the somewhat degenerate special case of $k=2$ and $\pi_1(M)=1$, the centre of $\pi_1(M)^k \rtimes \Sigma_k$ is $\Sigma_2$.)
Next, we consider the commutative diagram
\[\begin{tikzcd}
\pi_1(\mathrm{Cat}(M)) \ar[rr,"(*)"] \ar[drr,swap,"(**)"] && \pi_1(M)^k \ar[d,"\mathrm{proj}_1"] \\
&& \pi_1(M)
\end{tikzcd}\]
where the image of $(*)$ is $\mathrm{ker}(p_k)$ and the image of $(**)$ is $\mathrm{ker}(p_1)$ (these identifications follow, again, from the relevant long exact sequences, for general $k$ and for $k=1$ respectively). We know already that $\mathrm{ker}(p_k)$ is equal to $\Delta(G) \subseteq G^k$ for a certain subgroup $G \subseteq Z(\pi_1(M)) \subseteq \pi_1(M)$. Since this is a diagonal subgroup of the product $\pi_1(M)^k$, the projection onto the first factor restricts to an isomorphism of $\Delta(G)$ onto $G \subseteq \pi_1(M)$. By commutativity of the above diagram, it follows that $G = \mathrm{ker}(p_1)$. This concludes the proof of the first statement of the proposition. For the second statement, we repeat the same arguments with $\mathrm{Cat}$ replaced by $\mathrm{Cat}_\partial$ everywhere to obtain a similar formula, and then apply Proposition \ref{p:kernel-of-p1}.
\end{proof}

For the proof of Proposition \ref{p:kernel-of-pk} in the homotopy setting, we will use the following basic lemmas.

\begin{lem}
\label{lem:Serre-fibration}
Let $A \subseteq X$ be a cofibration and $Y$ any space, and assume that $X$ and $A$ are exponentiable, for example locally compact Hausdorff. Then the restriction map $\mathrm{Map}(X,Y) \to \mathrm{Map}(A,Y)$ is a Serre fibration. Moreover, the restriction map $\mathrm{hAut}(X) \to \mathrm{Map}(A,X)$ is also a Serre fibration.
\end{lem}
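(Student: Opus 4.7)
The plan is to verify the homotopy lifting property with respect to discs (which characterises Serre fibrations). Given a lifting problem for the restriction map $\mathrm{Map}(X,Y) \to \mathrm{Map}(A,Y)$, consisting of a map $f \colon D^n \to \mathrm{Map}(X,Y)$ and a homotopy $H \colon D^n \times [0,1] \to \mathrm{Map}(A,Y)$ whose initial map agrees with the restriction of $f$, I would first apply exponential adjunctions to transpose the problem. Since $D^n$ is compact Hausdorff (hence exponentiable) and $X$, $A$ are exponentiable by hypothesis, we have natural homeomorphisms
\[
\mathrm{Map}(D^n,\mathrm{Map}(X,Y)) \cong \mathrm{Map}(X,Y^{D^n}), \qquad \mathrm{Map}(D^n \times [0,1], \mathrm{Map}(A,Y)) \cong \mathrm{Map}(A \times [0,1], Y^{D^n}).
\]
Under these identifications, $f$ and $H$ transpose to a map $\tilde f \colon X \to Y^{D^n}$ and a homotopy $\tilde H \colon A \times [0,1] \to Y^{D^n}$ such that $\tilde H(-,0) = \tilde f|_A$. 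A lift in the original problem corresponds exactly to an extension of these data to a map $X \times [0,1] \to Y^{D^n}$ restricting to $\tilde f$ on $X \times \{0\}$ and to $\tilde H$ on $A \times [0,1]$.

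The key step is then to invoke the defining property of the cofibration $A \hookrightarrow X$: by definition, $A \hookrightarrow X$ being a (Hurewicz) cofibration means exactly that this homotopy extension property holds for every target, in particular for $Y^{D^n}$. This produces the required lift and establishes the first statement.

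For the second statement, I would use that $\mathrm{hAut}(X) \subseteq \mathrm{Map}(X,X)$ is a union of path components, since whether a self-map of $X$ is a homotopy equivalence depends only on its (free) homotopy class, and path components of $\mathrm{Map}(X,X)$ in the compact-open topology record precisely such classes. By the first part, $\mathrm{Map}(X,X) \to \mathrm{Map}(A,X)$ is a Serre fibration. Given a lifting problem with source mapping into $\mathrm{hAut}(X)$, I would produce a lift into $\mathrm{Map}(X,X)$; since $D^n \times [0,1]$ is path-connected and the lift starts inside the union of components $\mathrm{hAut}(X)$, it must remain in $\mathrm{hAut}(X)$ throughout.

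There is no genuine obstacle here: the argument is a standard adjunction-based reduction of a lifting problem to the defining extension property of a cofibration, followed by a path-component observation. The only thing to be careful about is bookkeeping the exponentiability hypotheses so that the adjunction homeomorphisms above are valid.
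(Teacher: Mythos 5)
Your proof is correct and rests on the same core idea as the paper's: reduce the homotopy lifting problem, via exponential adjunctions, to the homotopy extension property of the cofibration $A\hookrightarrow X$. The bookkeeping is slightly different, in a way worth noting. You transpose against the test space $D^n$, turning the lifting problem for $Y^X\to Y^A$ into an extension problem for $A\hookrightarrow X$ with target $Y^{D^n}$, and then apply the HEP of the cofibration directly; this requires $X$, $A$, $D^n$ exponentiable, all of which hold. The paper instead leaves the test space $Z$ untouched and transposes against $X$, $A$ and $[0,1]$, turning the problem into a lift against $\mathrm{ev}_0\colon Y^{[0,1]}\to Y$, which in turn requires the auxiliary fact that $Z\times A\hookrightarrow Z\times X$ is a cofibration for every $Z$ (proven using the retraction characterisation). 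Your route avoids this auxiliary lemma, but as a consequence only establishes the Serre-fibration property (HLP against discs, or more generally against exponentiable test spaces), whereas the paper's version, since $Z$ there is arbitrary, yields the stronger conclusion that $Y^X\to Y^A$ is a Hurewicz fibration. Since the lemma only asserts Serre fibration, your argument suffices. One small caveat on phrasing: your parenthetical that ``path components of $\mathrm{Map}(X,X)$ record precisely the free homotopy classes'' is stronger than what you need and requires exponentiability of $X$ to go in one direction; what the second part of the argument actually requires is only that a path in $\mathrm{Map}(X,X)$ yields a homotopy (so that any map in the same path component as a homotopy equivalence is itself a homotopy equivalence), which holds here. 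The path-connectedness argument for lifting into the union of components $\mathrm{hAut}(X)$ is the same as the paper's.
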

\begin{proof}
The first step is to prove that, for any space $Z$, the inclusion $Z \times A \hookrightarrow Z \times X$ is also a cofibration. This is most easily seen using the characterisation \cite[Proposition A.18]{Hatcher2002} of cofibrations $A \hookrightarrow X$ as those inclusions for which $X \times [0,1]$ retracts onto $(X \times \{0\}) \cup (A \times [0,1])$. If $r$ is a retraction witnessing that $A \hookrightarrow X$ is a cofibration, then $\mathrm{id}_Z \times r$ is a retraction witnessing that $Z \times A \hookrightarrow Z \times X$ is a cofibration.

Now suppose that we have a homotopy lifting problem as follows:
\begin{equation}
\label{eq:hlp}
\begin{tikzcd}
Z \times \{0\} \ar[rr,"f"] \ar[d] && Y^X \ar[d] \\
Z \times [0,1] \ar[rr,"g"] && Y^A
\end{tikzcd}
\end{equation}
By taking adjoints twice (since $X$ and $A$ are exponentiable), we may rewrite this as:
\begin{equation}
\label{eq:hep}
\begin{tikzcd}
Z \times A \ar[rr,"{g'}"] \ar[d,hook] && Y^{[0,1]} \ar[d,"{\mathrm{ev}_0}"] \\
Z \times X \ar[rr,"{f'}"] && Y
\end{tikzcd}
\end{equation}
This admits a lift $h' \colon Z \times X \to Y^{[0,1]}$, since $Z \times A \hookrightarrow Z \times X$ is a cofibration. Taking adjoints twice again, we obtain a lift $h \colon Z \times [0,1] \to Y^X$ of \eqref{eq:hlp}. Thus we have shown that
\[
\mathrm{Map}(X,Y) = Y^X \longrightarrow Y^A = \mathrm{Map}(A,Y)
\]
is a Hurewicz fibration, so in particular a Serre fibration.

In particular, this says that the restriction map $\mathrm{Map}(X,X) \to \mathrm{Map}(A,X)$ is a Serre fibration. In general, whenever $E \to B$ is a Serre fibration and $E_0 \subseteq E$ is a union of path-components, the restriction $E_0 \to B$ is also a Serre fibration. Since $\mathrm{hAut}(X)$ is a union of path-components of $\mathrm{Map}(X,X)$, this implies the second statement of the lemma.
\end{proof}

\begin{lem}
\label{lem:inclusion-hAut-weq}
Let $A \subseteq B \subseteq X$ be cofibrations of exponentiable spaces such that $B$ admits a strong deformation retraction onto $A$. Then the inclusion $\mathrm{hAut}_B(X) \hookrightarrow \mathrm{hAut}_A(X)$ is a weak homotopy equivalence.
\end{lem}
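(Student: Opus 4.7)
The plan is to realise $\mathrm{hAut}_B(X) \hookrightarrow \mathrm{hAut}_A(X)$ as the induced map on fibres of a morphism of Serre fibrations whose map of total spaces is the identity on $\mathrm{hAut}(X)$ and whose map of base spaces is a homotopy equivalence, and then to invoke the standard fact (a five-lemma corollary) that such a morphism induces a weak equivalence on fibres.

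First, I would set up the two fibrations. By Lemma~\ref{lem:Serre-fibration} (applied to the cofibrations $A \subseteq X$ and $B \subseteq X$), the restriction maps
\[
R_A \colon \mathrm{hAut}(X) \too \mathrm{Map}(A,X), \qquad R_B \colon \mathrm{hAut}(X) \too \mathrm{Map}(B,X)
\]
are Serre fibrations, and their fibres over $\mathrm{incl}_A$ and $\mathrm{incl}_B$ are, by inspection, precisely $\mathrm{hAut}_A(X)$ and $\mathrm{hAut}_B(X)$. Writing $\rho \colon \mathrm{Map}(B,X) \to \mathrm{Map}(A,X)$ for pre-composition with $A \hookrightarrow B$, one has $R_A = \rho \circ R_B$ and $\rho(\mathrm{incl}_B) = \mathrm{incl}_A$, so the inclusion $\mathrm{hAut}_B(X) \hookrightarrow \mathrm{hAut}_A(X)$ is identified with the induced map of fibres.

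Second, I would prove that $\rho$ is a homotopy equivalence, using the strong deformation retraction. Let $r \colon B \to A$ be the retraction and $H \colon B \times [0,1] \to B$ the deformation, so $H_0 = \mathrm{id}_B$, $H_1 = \iota \circ r$ (with $\iota \colon A \hookrightarrow B$) and $H_t \circ \iota = \iota$ for all $t$. Pre-composition with $r$ defines a map $r^{\ast} \colon \mathrm{Map}(A,X) \to \mathrm{Map}(B,X)$. The identity $r \iota = \mathrm{id}_A$ gives $\rho \circ r^{\ast} = \mathrm{id}_{\mathrm{Map}(A,X)}$ on the nose, while pre-composition with $H$ yields a homotopy $r^{\ast} \circ \rho \simeq \mathrm{id}_{\mathrm{Map}(B,X)}$.

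Finally, choosing $\mathrm{id}_X$ as the basepoint of both copies of $\mathrm{hAut}(X)$ (and the corresponding inclusions as basepoints downstairs), the two Serre fibrations assemble into a commuting ladder of long exact sequences of homotopy groups, in which the map induced on $\mathrm{hAut}(X)$ is the identity and the map induced on $\mathrm{Map}(-,X)$ is $\rho_{\ast}$; a routine five-lemma / four-lemma argument then forces the induced map on fibres to be an isomorphism on each $\pi_n$. The only subtlety I expect to have to handle is the $n=0$ end of the ladder, where the long exact sequence is a sequence of pointed sets; this is the usual action-of-$\pi_1$-of-base-on-$\pi_0$-of-fibre argument and is the only non-formal bookkeeping step in the proof.
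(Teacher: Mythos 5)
Your proof is correct and follows the same overall framework as the paper's: identify $\mathrm{hAut}_B(X) \hookrightarrow \mathrm{hAut}_A(X)$ with the map on fibres of a morphism of Serre fibrations
\[
\begin{tikzcd}
\mathrm{hAut}_B(X) \ar[r] \ar[d] & \mathrm{hAut}_A(X) \ar[d] \\
\mathrm{hAut}(X) \ar[r,"\mathrm{id}"] \ar[d,"R_B"'] & \mathrm{hAut}(X) \ar[d,"R_A"] \\
\mathrm{Map}(B,X) \ar[r,"\rho"] & \mathrm{Map}(A,X)
\end{tikzcd}
\]
(using Lemma~\ref{lem:Serre-fibration}), then show $\rho$ is a weak equivalence and finish by the five-lemma applied to the ladder of long exact sequences, with a bookkeeping remark at $\pi_0$. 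The only place where you diverge is the middle step: the paper shows $\rho$ is a weak equivalence by exhibiting it as a Serre fibration whose fibre $\mathrm{Map}_A(B,X)$ is contractible (deformation-retracting onto the single point $\mathrm{inc}_A^X \circ r$ via precomposition with $h_t$) and reading off isomorphisms from the long exact sequence, whereas you construct an explicit two-sided homotopy inverse $r^{*} = (- \circ r)$, noting $\rho \circ r^{*} = \mathrm{id}$ strictly (since $r\iota = \mathrm{id}_A$) and $r^{*} \circ \rho \simeq \mathrm{id}$ via precomposition with the deformation $H$. Your route is a little more direct and even yields the stronger statement that $\rho$ is an honest homotopy equivalence, not merely a weak one; the paper's route keeps everything uniformly inside the fibration-and-LES machinery it has already set up. Both are fine, and the concluding five-lemma argument, including your flagged care at $\pi_0$, matches the paper's.
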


Here we write $\mathrm{hAut}_A(X)$ for the space of homotopy automorphisms of $X$ that agree with the identity on $A$. We will also use the notation $\mathrm{Map}_A(B,X)$ for the space of maps $B \to X$ that agree with the inclusion on $A$.

We will use this lemma below when $X$ is a manifold, $B \subset X$ is an embedded interval and $A$ is a point in this interval. (Manifolds are locally compact Hausdorff, hence exponentiable.)

\begin{proof}[Proof of Lemma \ref{lem:inclusion-hAut-weq}]
Consider the following commutative diagram
\begin{equation}
\label{eq:big-diagram}
\begin{tikzcd}
&& \mathrm{hAut}_B(X) \ar[rr,hook] \ar[d,hook] && \mathrm{hAut}_A(X) \ar[d,hook] \\
&& \mathrm{hAut}(X) \ar[rr,"{\mathrm{id}}"] \ar[d,"-|_B",swap] && \mathrm{hAut}(X) \ar[d,"-|_A"] \\
\mathrm{Map}_A(B,X) \ar[rr,hook] && \mathrm{Map}(B,X) \ar[rr,"-|_A"] && \mathrm{Map}(A,X)
\end{tikzcd}
\end{equation}
The maps denoted $-|_B$ and $-|_A$ are restrictions to $B$ respectively $A$, and are Serre fibrations by Lemma \ref{lem:Serre-fibration}. Let $r\colon B \to A$ be a retraction of $B$ onto $A$ and let $h_t \colon B \to B$ be a homotopy between $\mathrm{inc}_A^B \circ r$ and $\mathrm{id}_B$ relative to $A$. Then $\mathrm{inc}_A^X \circ r$ is a point in the bottom-left space $\mathrm{Map}_A(B,X)$ and a deformation retraction $H_t$ of this space onto the point $\{ \mathrm{inc}_A^X \circ r \}$ is given by $H_t(f) = f \circ h_t$. From the long exact sequence of homotopy group it follows that the bottom horizontal map $-|_A$ induces isomorphisms on $\pi_*$ for $*\geq 1$ and an injection on $\pi_0$. This map is also clearly surjective since any map $A\to X$ may be extended to $B$ using the retraction $r$, so it is a weak homotopy equivalence. It then follows from the $5$-lemma (and a little extra care in degree $0$) that $\mathrm{hAut}_B(X) \hookrightarrow \mathrm{hAut}_A(X)$ is a weak homotopy equivalence.
\end{proof}

\begin{proof}[Proof of Proposition \ref{p:kernel-of-pk} in the homotopy setting.]
In this setting, we cannot use the long exact sequence, so we give a different argument. First, the right-hand side of diagram \eqref{eq:kernel-of-pk-proof} implies that $\mathrm{ker}(p_k) \subseteq \pi_1(M)^k$. We then consider the commutative square in diagram \eqref{eq:point-pushing-diagonal} below, where the subscript $z$ means that $z$ is fixed \emph{pointwise}. It follows that $\mathrm{ker}(p_k) \subseteq \mathrm{ker}(p_1)^k$.

We next show that $\mathrm{ker}(p_k)$ \emph{contains} the diagonal $\Delta(\mathrm{ker}(p_1))$. Fix an element $a_1 \in \pi_1(M)$, set $a = (a_1,\ldots,a_1) \in \pi_1(M)^k$ and consider the diagram
\begin{equation}
\label{eq:point-pushing-diagonal}
\begin{tikzcd}
&& \pi_0(\mathrm{hAut}_I(M)) \ar[d] \\
\pi_1(M)^k \ar[rr,"p_k"] \ar[d,swap,"\mathrm{proj}_i"] && \pi_0(\mathrm{hAut}_z(M)) \ar[d] \\
\pi_1(M) \ar[rr,"p_1"] && \pi_0(\mathrm{hAut}(M,z_i))
\end{tikzcd}
\end{equation}
for some fixed $i$ (say $i=1$), where $I \subset M$ is an embedded interval containing the configuration $z$ and again the subscript $I$ means that $I$ is fixed \emph{pointwise}. We observe that the element $p_k(a) \in \pi_0(\mathrm{hAut}_z(M))$ may be lifted to an element $\varphi \in \pi_0(\mathrm{hAut}_I(M))$, defined as follows. Choose an isotopy of embeddings $I \hookrightarrow M$ starting at the inclusion, pulling the interval $I$ around the loop $a_1$ and then ending at the inclusion again. This may be constructed similarly to the explicit description of the (smooth) point-pushing map in Lemma \ref{l:geometric-point-pushing} and Figure \ref{fig:point-pushing}, using a tubular neighbourhood of an embedded representative of the loop $a_1$, which is a $D^{d-1}$-bundle over $a_1$, and a choice of trivial sub-$I$-bundle. Extend this by the isotopy extension theorem to a path in $\mathrm{Diff}(M)$ from $\mathrm{id}$ to $\varphi$. Then $\varphi$ is a diffeomorphism (hence homotopy automorphism) of $M$ fixing $I$ pointwise and representing $p_k(a)$ when considered as a homotopy automorphism of $M$ fixing $z \subset I$ pointwise. Now if we assume that $a_1 \in \mathrm{ker}(p_1)$, it follows that $\varphi = 1 \in \pi_0(\mathrm{hAut}_I(M))$, since the inclusion $\mathrm{hAut}_I(M) \hookrightarrow \mathrm{hAut}(M,z_1)$ is a weak homotopy equivalence by Lemma \ref{lem:inclusion-hAut-weq}, so in particular it induces an injection on $\pi_0$. It then also follows that $a = (a_1,\ldots,a_1) \in \mathrm{ker}(p_k)$.

Finally, suppose that $\mathrm{ker}(p_k) \neq \Delta(\mathrm{ker}(p_1))$. Then there must be an element $a = (a_1,\ldots,a_k) \in \mathrm{ker}(p_k) \smallsetminus \Delta(\mathrm{ker}(p_1))$. Since $a_1 \in \mathrm{ker}(p_1)$, we already know that $(a_1^{-1},\ldots,a_1^{-1}) \in \mathrm{ker}(p_k)$, so we also have $b = (b_1,\ldots,b_k) \in \mathrm{ker}(p_k) \smallsetminus \Delta(\mathrm{ker}(p_1))$, where $b_i = a_i a_1^{-1}$, in particular $b_1 = 1$. Choose embedded paths $A_i$ from $z_1$ to $z_i$ for each $i \in \{2,\ldots,k\}$ that are pairwise disjoint except at $z_1$. Also choose embedded loops based at $z_i$ representing $b_i$ (also denoted $b_i$ by abuse of notation) for each $i \in \{2,\ldots,k\}$. We may assume that the loops $b_i$ are pairwise disjoint, and also disjoint from the arcs $A_j$ except at $z_i$. We also assume that the point-pushing automorphism $p_k(b) \in \mathrm{hAut}(M,z)$ has support contained in a small tubular neighbourhood of the union of the loops $b_i$. See Figure \ref{fig:arcs}. By assumption, there is a homotopy $\mathrm{id} \simeq p_k(b)$ of self-maps $(M,z) \to (M,z)$. Restricting this to the embedded path $A_i$, we see that $A_i \simeq p_k(b)(A_i)$ relative to endpoints. Thus, we have
\[
b_i \simeq A_i^{-1} \cdot p_k(b)(A_i) \simeq A_i^{-1} \cdot A_i \simeq *,
\]
where $\cdot$ denotes concatenation of paths. So $b = (1,\ldots,1)$, which is a contradiction.

This finishes the proof of the first statement of the proposition. For the second statement, just as before, we repeat the same arguments with $\mathrm{hAut}$ replaced by $\mathrm{hAut}_\partial$ everywhere to obtain a similar formula, and then apply Proposition \ref{p:kernel-of-p1}.
\end{proof}

\begin{rmk}
The kernel of \eqref{eq:point-pushing-general} for $k=1$, in the $3$-dimensional topological (equivalently smooth) setting, has been understood completely by \cite{Banks2017}. By Proposition \ref{p:kernel-of-pk}, it is therefore also understood completely for \emph{all} $k$ in the $3$-dimensional topological/smooth setting.
\end{rmk}

\begin{rmk}
If $M$ does not necessarily have boundary, but it is equipped with marked points that are required to be fixed under automorphisms, then the corresponding point-pushing map
\[
p_k \colon \pi_1(C_k(M \smallsetminus P)) \longrightarrow \pi_0(\mathrm{Cat}_P(M,z))
\]
is injective when the set $P \subset M$ of marked points is non-empty, just as in the $\partial M \neq \varnothing$ setting. For $k=1$ this follows since $\mathrm{ker}(p_1)$ is the image of the map on $\pi_1$ induced by evaluation $\mathrm{Cat}_P(M) \to M$ at a point $z_1 \in M \smallsetminus P$, which is homotopic to evaluation at a point in $P$, hence nullhomotopic. For higher $k$, the proof above adapts to show that $\mathrm{ker}(p_k) = \Delta(\mathrm{ker}(p_1))$ also in this setting, and hence $p_k$ is also injective.
\end{rmk}

\begin{figure}[t]
\labellist
\small\hair 2pt
 \pinlabel {$z_1$} [ ] at 219 125
 \pinlabel {$z_2$} [ ] at 187 189
 \pinlabel {$z_3$} [ ] at 259 180
 \pinlabel {$z_4$} [ ] at 297 83
 \pinlabel {$z_5$} [ ] at 176 70
 \pinlabel {$A_2$} [ ] at 189 162
 \pinlabel {$A_5$} [ ] at 196 112
 \pinlabel {$A_3$} [ ] at 257 145
 \pinlabel {$A_4$} [ ] at 244 113
 \pinlabel {$\mathrm{Tub}(b_2)$} [ ] at 76 177
 \pinlabel {$\mathrm{Tub}(b_5)$} [ ] at 73 69
 \pinlabel {$\mathrm{Tub}(b_3)$} [ ] at 329 196
 \pinlabel {$\mathrm{Tub}(b_4)$} [ ] at 270 45
\endlabellist
    \centering
    \includegraphics[scale=0.6]{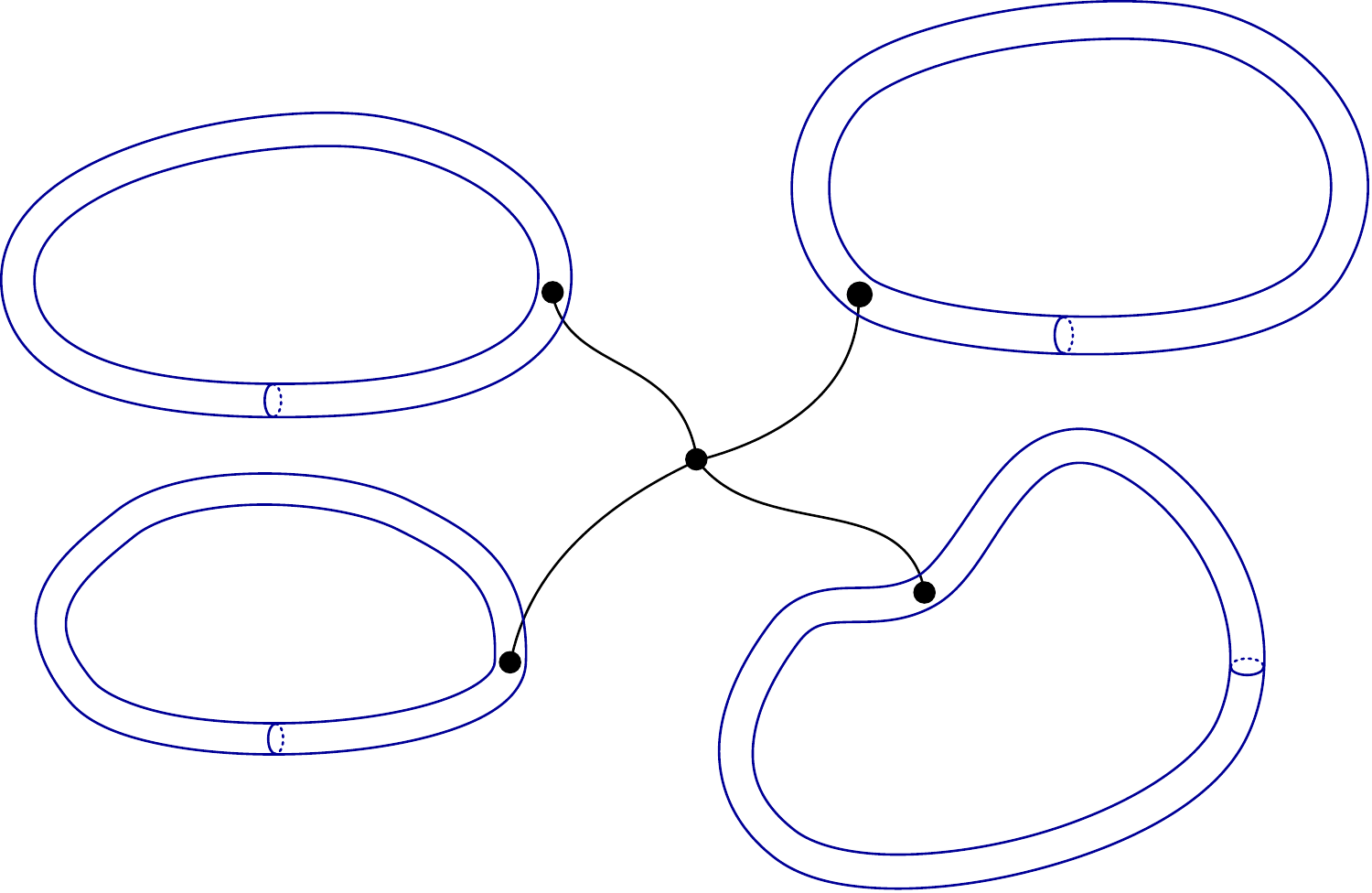}
    \caption{The paths $A_i$ and the support of the point-pushing automorphism $p_k(b)$.}
    \label{fig:arcs}
\end{figure}

\begin{rmk}[\emph{A fake Dehn twist.}]
Lemma \ref{lem:inclusion-hAut-weq} in the setting $(X,B,A) = (S,D,\{*\})$ for a surface $S$ with embedded closed disc $D \subset S$ with centre $* \in D$ has the following potentially counter-intuitive consequence. Let $T_D \in \mathrm{hAut}_D(S)$ be a Dehn twist supported in a small annular neighbourhood of $D$ in $S$. Then the element
\begin{equation}
\label{eq:Dehn-twist-relative-to-disc}
[T_D] \in \pi_0(\mathrm{hAut}_D(S))
\end{equation}
of the mapping class group of $S$ relative to $D$ is trivial: this is because its image in $\pi_0(\mathrm{hAut}_*(S))$ is clearly trivial -- one may simply untwist $T_D$ while keeping the point $*$ fixed -- and the map $\pi_0(\mathrm{hAut}_D(S)) \to \pi_0(\mathrm{hAut}_*(S))$ is injective by Lemma \ref{lem:inclusion-hAut-weq}. In contrast, the element
\begin{equation}
\label{eq:Dehn-twist-remove-disc}
[T_D] \in \pi_0(\mathrm{hAut}_{\partial D}(S \smallsetminus \mathrm{int}(D)))
\end{equation}
is well-known to be non-trivial (and of infinite order) in the mapping class group of $S \smallsetminus \mathrm{int}(D)$ relative to the boundary-component $\partial D$, as long as $S$ is not the $2$-sphere or the $2$-disc.\footnote{To see this, write $\gamma$ for a curve in the interior of $S \smallsetminus \mathrm{int}(D)$ parallel to $\partial D$, so that $T_\gamma = T_D$, and choose an arc $\alpha$ in $S \smallsetminus \mathrm{int}(D)$ with both endpoints on $\partial D$ so that $i(\alpha,\gamma) = 2$, where $i(-,-)$ is the minimal geometric intersection number amongst isotopic representatives. Then $i(T_{\gamma}^k(\alpha),\alpha)$ is strictly increasing as $k\to\infty$. See \cite[Proposition 3.2]{FarbMargalitPrimer} for details (in the case of closed surfaces, which may easily be adapted to compact surfaces with boundary).}

Notice that such an apparent discrepancy cannot occur if $\mathrm{hAut}(-)$ is replaced with $\mathrm{Homeo}(-)$ or $\mathrm{Diff}(-)$, since in these two cases there is a canonical homeomorphism between $\mathrm{Cat}_D(S)$ and $\mathrm{Cat}_{\partial D}(S \smallsetminus \mathrm{int}(D))$ for $\mathrm{Cat} \in \{ \mathrm{Homeo} , \mathrm{Diff} \}$.

The reason for this apparent discrepancy in the $\mathrm{Cat} = \mathrm{hAut}$ setting is illustrated by exhibiting an explicit nullhomotopy of \eqref{eq:Dehn-twist-relative-to-disc}: see Figure \ref{fig:fake-Dehn-twist}. This nullhomotopy depends on the fact that points may be mapped into the disc $D$ (hence why it does not work for \eqref{eq:Dehn-twist-remove-disc}) and also the fact that homotopy equivalences may be non-injective (hence why it does not work for $\mathrm{Cat} \in \{ \mathrm{Homeo} , \mathrm{Diff} \}$).
\end{rmk}

\begin{figure}
    \centering
    \includegraphics[scale=0.3]{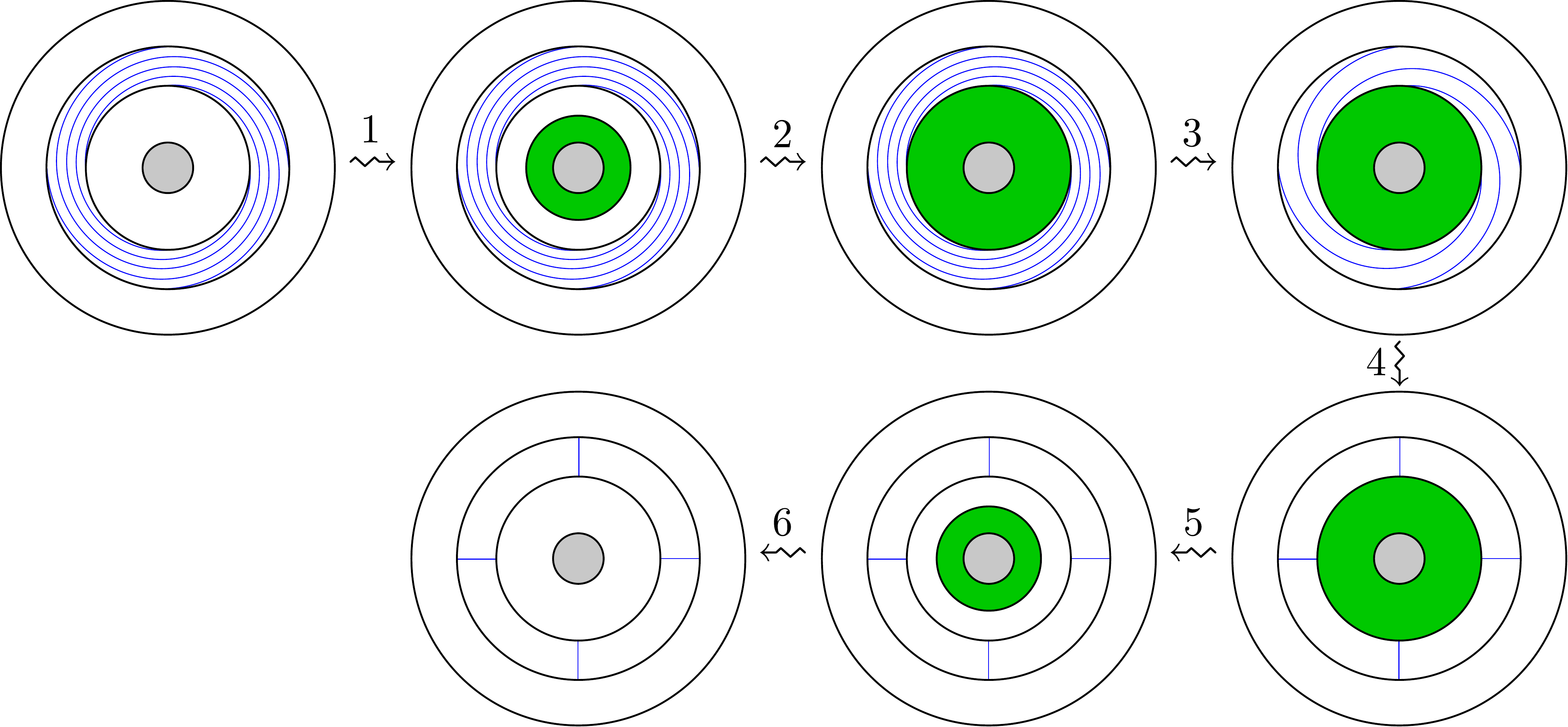}
    \caption{A nullhomotopy of the ``fake Dehn twist'' in the (homotopy automorphism version of the) mapping class group of $S$ relative to $D$. The blue lines indicate twisting in an annular region. The central grey disc is $D$. The green annulus surrounding $D$ in some of the pictures indicates that the inner boundary of the green annulus is mapped to itself by the identity (as it must be), the outer boundary of the green annulus is sent the the midpoint of the disc, and the interior of the green annulus is ``turned inside out'' and mapped onto the grey disc $D$. The untwisting of the Dehn twist in steps 3 and 4 is well-defined exactly because the outer boundary of the green annulus is collapsed to a point. The homotopies in steps 1 and 6 are given by gradually ``folding'' the green annulus inwards, while keeping the grey disc fixed, until the outer boundary of the green annulus is collapsed to the midpoint of the disc. Steps 2 and 5 are not strictly necessary, since one could directly perform the homotopies of steps 1 and 6 with the larger green annulus, but they perhaps make the picture more intuitive.}
    \label{fig:fake-Dehn-twist}
\end{figure}

\begin{rmk}
\label{rmk:relative-hAut}
There is a subtle difference between the space $\mathrm{hAut}_A(X)$ involved in Lemma \ref{lem:inclusion-hAut-weq} and the proof of Proposition \ref{p:kernel-of-pk} and the space $\mathrm{hAut}(X|A)$ defined in \S\ref{s:monodromy-actions}, namely:
\begin{align*}
\mathrm{hAut}(X|A) &= \{ f \in \mathrm{Map}(X,X) \mid f|_A = \mathrm{id}_A \text{ and } f \text{ admits a homotopy inverse relative to } A \} \\
\mathrm{hAut}_A(X) &= \{ f \in \mathrm{Map}(X,X) \mid f|_A = \mathrm{id}_A \text{ and } f \text{ admits a homotopy inverse} \} ,
\end{align*}
so clearly $\mathrm{hAut}(X|A) \subseteq \mathrm{hAut}_A(X)$. In general, if $A \subseteq X$ is a cofibration and $f \colon X \to X$ restricts to the identity on $A$ and admits a homotopy inverse, then one may find both a \emph{left} homotopy inverse for $f$ relative to $A$ and a \emph{right} homotopy inverse for $f$ relative to $A$, but these may not necessarily coincide. On the other hand, if the space $\mathrm{Map}(A,A)$ is simply-connected, then one may always find a two-sided homotopy inverse for $f$ relative to $A$, and so in this case the two spaces $\mathrm{hAut}_A(X)$ and $\mathrm{hAut}(X|A)$ are equal. In particular, this holds if $A=D$ is a disc.
\end{rmk}

%%%%%%%%%%%%%%%%%%%%%%%%%%%%%%%%%%%%%%%%%%%
%%%%%%%%%%%%%%%%%%%%%%%%%%%%%%%%%%%%%%%%%%%
\section{Formulas for associated point-pushing actions on mapping spaces}\label{s:mapping-spaces}
%%%%%%%%%%%%%%%%%%%%%%%%%%%%%%%%%%%%%%%%%%%
%%%%%%%%%%%%%%%%%%%%%%%%%%%%%%%%%%%%%%%%%%%

As an immediate corollary of Proposition \ref{p:formula-sigma}, Lemma \ref{l:reduction-to-one-sphere} Proposition \ref{p:formula-alpha-1} and Lemma \ref{l:associated-point-pushing-comparison}, we obtain (under certain assumptions on $M$) a formula for the associated point-pushing action (Definition \ref{d:point-pushing-action-mapping}) of $\pi_1(C_k(M))$ on the mapping space $\mathrm{Map}_*^c(M \smallsetminus z,X)$, under the identification
\begin{equation}
\label{eq:identification-mapping-space}
\mathrm{Map}_*^c(M \smallsetminus z,X) \simeq \mathrm{Map}_*(M,X) \times (\Omega_c^{d-1}X)^k
\end{equation}
induced by the identification \eqref{construction:equivalence-of-pairs} of $M \smallsetminus z$ with $M \vee \bigvee^k S^{d-1}$. On the right-hand side of \eqref{eq:identification-mapping-space}, $\Omega_c^{d-1}X$ denotes the union of path-components of $\Omega^{d-1}X$ corresponding to the subset $c \subseteq [S^{d-1},X]$.

\begin{rmk}
\label{r:natural-actions}
There are two natural actions on the space $\Omega_c^{d-1}X$. First, there is an action-up-to-homotopy of $\pi_1(X)$ on $\Omega^{d-1}X$, which restricts to an action-up-to-homotopy on the subspace $\Omega_c^{d-1}X$ (this is because the subset $c \subseteq [S^{d-1},X]$ corresponds to a union of $\pi_1(X)$-orbits of $\pi_{d-1}(X)$).

Second, there is an involution of $\Omega^{d-1}X$ given by precomposition with a reflection of $S^{d-1}$ in a hyperplane containing the basepoint; this involution commutes with the action-up-to-homotopy of $\pi_1(X)$. If $c \subseteq [S^{d-1},X]$ is invariant under the corresponding involution of $[S^{d-1},X]$, then this involution restricts to the subspace $\Omega_c^{d-1}X$. In our situation, the involution will only be relevant if $M$ is non-orientable, in which case we have assumed (see Definition \ref{d:cmap-2}) that $c \subseteq [S^{d-1},X]$ is a subset of the fixed points under the involution, so in particular it is invariant under the involution.
\end{rmk}

\begin{coro}
\label{coro:point-pushing-mapping-space}
If $d = \mathrm{dim}(M) \geq 3$ and $M$ satisfies at least one of the following conditions:
\begin{itemizeb}
\item $M$ is simply-connected, or
\item the handle-dimension of $M$ is at most $d-2$;
\end{itemizeb}
then the point-pushing action of $\gamma = (\alpha_1,\ldots,\alpha_k;\sigma) \in \pi_1(C_k(M)) \cong \pi_1(M)^k \rtimes \Sigma_k$ on the mapping space $\mathrm{Map}_*^c(M \smallsetminus z,X)$, under the identification \eqref{eq:identification-mapping-space}, is given as follows \textup{(}see also Figure \ref{fig:point-pushing-mapping-spaces-1}\textup{)}
\begin{equation}
\label{eq:action-formula}
(\alpha_1,\ldots,\alpha_k;\sigma) \cdot (f,g_1,\ldots,g_k) = (f , \bar{g}_1 ,\ldots, \bar{g}_k),
\end{equation}
where $\bar{g}_i = f_*(\alpha_i) . g_{\sigma(i)} . \mathrm{sgn}(\alpha_i)$, and
\begin{itemizeb}
\item for an element $\alpha \in \pi_1(M)$ we write $\mathrm{sgn}(\alpha) = +1$ if $\alpha$ lifts to a loop in the orientation double cover of $M$ and $\mathrm{sgn}(\alpha) = -1$ otherwise,
\item the actions of $\pi_1(X)$ and of $\{\pm 1\}$ on $\Omega_c^{d-1}X$ are as described in Remark \ref{r:natural-actions} above.
\end{itemizeb}
\end{coro}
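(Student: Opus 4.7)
My plan is to combine Lemma \ref{l:associated-point-pushing-comparison} with the explicit formulas of Proposition \ref{p:formula-sigma} (for symmetric generators) and of Lemma \ref{l:reduction-to-one-sphere} together with Proposition \ref{p:formula-alpha-1} (for loop generators), then translate the resulting pre-composition description through the identification \eqref{eq:identification-mapping-space} to arrive at \eqref{eq:action-formula}.

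First I would invoke Lemma \ref{l:associated-point-pushing-comparison} to rewrite the action of $\gamma \in \pi_1(C_k(M))$ on $\mathrm{Map}_*^c(M \smallsetminus z, X)$ as pre-composition with $\pi_\gamma$, which via Construction \ref{construction:equivalence-of-pairs} I view as a self-map of $M \vee W_k$. Since $\vee$ is the coproduct in pointed spaces, pre-composition by a wedge of maps corresponds to a product of pre-compositions on the factors of \eqref{eq:identification-mapping-space}. As $\pi_1(C_k(M))$ is generated by symmetric and loop generators, it then suffices to check \eqref{eq:action-formula} separately in these two cases and assemble via the semidirect product.

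For a symmetric generator $\sigma$, I would apply Proposition \ref{p:formula-sigma}, which gives $\pi_\sigma \simeq \mathrm{id}_M \vee \sigma_\sharp$: pre-composition is the identity on $f$ and permutes the $(\Omega_c^{d-1}X)^k$ factors via $\sigma$. For a loop generator $\alpha$ in the $i$-th position, Lemma \ref{l:reduction-to-one-sphere} reduces the problem to pre-composition with $\bar{\pi}_\alpha \colon M \vee S^{d-1} \to M \vee S^{d-1}$; under the hypothesis on $M$, Proposition \ref{p:formula-alpha-1} gives $\bar{\pi}_\alpha|_M = \mathrm{inc}_M$ (so $f$ is unchanged) and $\bar{\pi}_\alpha|_{S^{d-1}} = ((\alpha \circ \mathrm{coll}) \vee \mathrm{sgn}(\alpha)) \circ \mathrm{pinch}$.

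The heart of the argument, and the only step requiring real care, will be to identify the post-composition of $f \vee g_i$ with this last map as the element $f_*(\alpha) \cdot g_i \cdot \mathrm{sgn}(\alpha) \in \Omega_c^{d-1}X$ in the sense of Remark \ref{r:natural-actions}. This is essentially the definition of the $\pi_1(X)$-action on $\pi_{d-1}(X)$: the pinch map exhibits $S^{d-1}$ as a wedge, on one summand we obtain the loop $f \circ \alpha \circ \mathrm{coll}$ representing $f_*(\alpha) \in \pi_1(X)$, on the other we obtain $g_i \circ \mathrm{sgn}(\alpha)$ (the reflection action), and concatenation via pinch realises the standard action of $f_*(\alpha)$ on $g_i \cdot \mathrm{sgn}(\alpha)$. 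Combining this with the symmetric part according to the semidirect product structure yields \eqref{eq:action-formula}; the main obstacle, as always in this section, is careful bookkeeping of orientations to obtain the correct $\mathrm{sgn}$, but that sign has already been established by Proposition \ref{p:formula-alpha-1}.
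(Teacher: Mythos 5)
Your proposal is correct and follows essentially the same route as the paper's own proof: reduce to symmetric and loop generators, apply Lemma \ref{l:associated-point-pushing-comparison} to express the action as pre-composition, and then read off the formula from Proposition \ref{p:formula-sigma} (symmetric case) and Lemma \ref{l:reduction-to-one-sphere} plus Proposition \ref{p:formula-alpha-1} (loop case). The only point the paper makes slightly more explicitly than you do is that the permutation of the $\Omega_c^{d-1}X$ factors is by $\sigma^{-1}$ because pre-composition is contravariant, which is precisely what produces $\bar{g}_i = g_{\sigma(i)}$ in \eqref{eq:action-formula}; but your formula and identification of the $\pi_1(X)$-action via the pinch map are correct.
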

\begin{proof}
It suffices to check this for elements of the form $(1,\ldots,1;\sigma)$ and $(\alpha,1,\ldots,1;\mathrm{id})$ (symmetric and loop generators), which we denote simply by $\sigma$ and $\alpha$ by abuse of notation.

By Proposition \ref{p:formula-sigma}, the action of $\sigma$ on $M \smallsetminus z \simeq M \vee W_k$ is the identity on the $M$ summand and permutes the $k$ copies of $S^{d-1}$ in $W_k = \bigvee^k S^{d-1}$. Lemma \ref{l:associated-point-pushing-comparison} tells us that the associated point-pushing action of $\sigma$ on $\mathrm{Map}_*(M,X) \times (\Omega_c^{d-1}X)^k$ is induced from its point-pushing action on $M \vee W_k$ by precomposition, so we deduce that it acts by the identity on the $\mathrm{Map}_*(M,X)$ component and the $\Omega_c^{d-1}X$ components are permuted by $\sigma^{-1}$ (the inverse occurs since precomposition is contravariant).

Similarly, Lemma \ref{l:associated-point-pushing-comparison} implies that the point-pushing action of $\alpha$ on $\mathrm{Map}_*(M,X) \times (\Omega_c^{d-1}X)^k$ is induced from the point-pushing action of $\alpha$ on $M \vee W_k$, which is described by Lemma \ref{l:reduction-to-one-sphere} and Proposition \ref{p:formula-alpha-1}, by precomposition. Putting this together, we see that $\alpha$ sends the tuple $(f,g_1,\ldots,g_k)$ to the tuple $(f,f_*(\alpha).g_1.\mathrm{sgn}(\alpha),g_2,\ldots,g_k)$, as desired. Specifically, the $f$ entry in this tuple follows from the left-hand side of \eqref{eq:two-formulas}, the $f_*(\alpha).g_1.\mathrm{sgn}(\alpha)$ entry follows from the right-hand side of \eqref{eq:two-formulas} and the remaining entries follow from Lemma  \ref{l:reduction-to-one-sphere}.
\end{proof}

\definecolor{pushmap}{RGB}{0,150,0}
\begin{figure}[t]
\labellist
\small\hair 2pt
 \pinlabel {\textcolor{blue}{\large $\sigma$}} [ ] at 216 542
 \pinlabel {\textcolor{blue}{$\alpha_1$}} [b] at 151 604
 \pinlabel {\textcolor{blue}{$\alpha_2$}} [b] at 151 574
 \pinlabel {\textcolor{blue}{$\alpha_k$}} [b] at 151 484
 \pinlabel {$f_*(\alpha_1).g_{\sigma(1)}.\mathrm{sgn}(\alpha_1)$} [ ] at 60 604
 \pinlabel {$f_*(\alpha_2).g_{\sigma(2)}.\mathrm{sgn}(\alpha_2)$} [ ] at 60 574
 \pinlabel {$f_*(\alpha_k).g_{\sigma(k)}.\mathrm{sgn}(\alpha_k)$} [ ] at 60 484
 \pinlabel {$f$} [ ] at 85 435
 \pinlabel {\textcolor{blue}{$\alpha_{\sigma^{-1}(1)}$}} [b] at 297 604
 \pinlabel {\textcolor{blue}{$\alpha_{\sigma^{-1}(2)}$}} [b] at 297 574
 \pinlabel {\textcolor{blue}{$\alpha_{\sigma^{-1}(k)}$}} [b] at 297 484
 \pinlabel {$g_1$} [ ] at 352 604
 \pinlabel {$g_2$} [ ] at 352 574
 \pinlabel {$g_k$} [ ] at 352 484
 \pinlabel {$f$} [ ] at 352 435
 \pinlabel {\textcolor{pushmap}{$\mathrm{push}_\gamma$}} [b] at 216 403
 \pinlabel {\textcolor{blue}{\rotatebox{270}{$=$}}} [b] at 216 631
 \pinlabel {\textcolor{blue}{$(\alpha_1,\ldots,\alpha_k;\sigma) = \gamma$}} [b] at 216 648
\endlabellist
\centering
\includegraphics[scale=0.6]{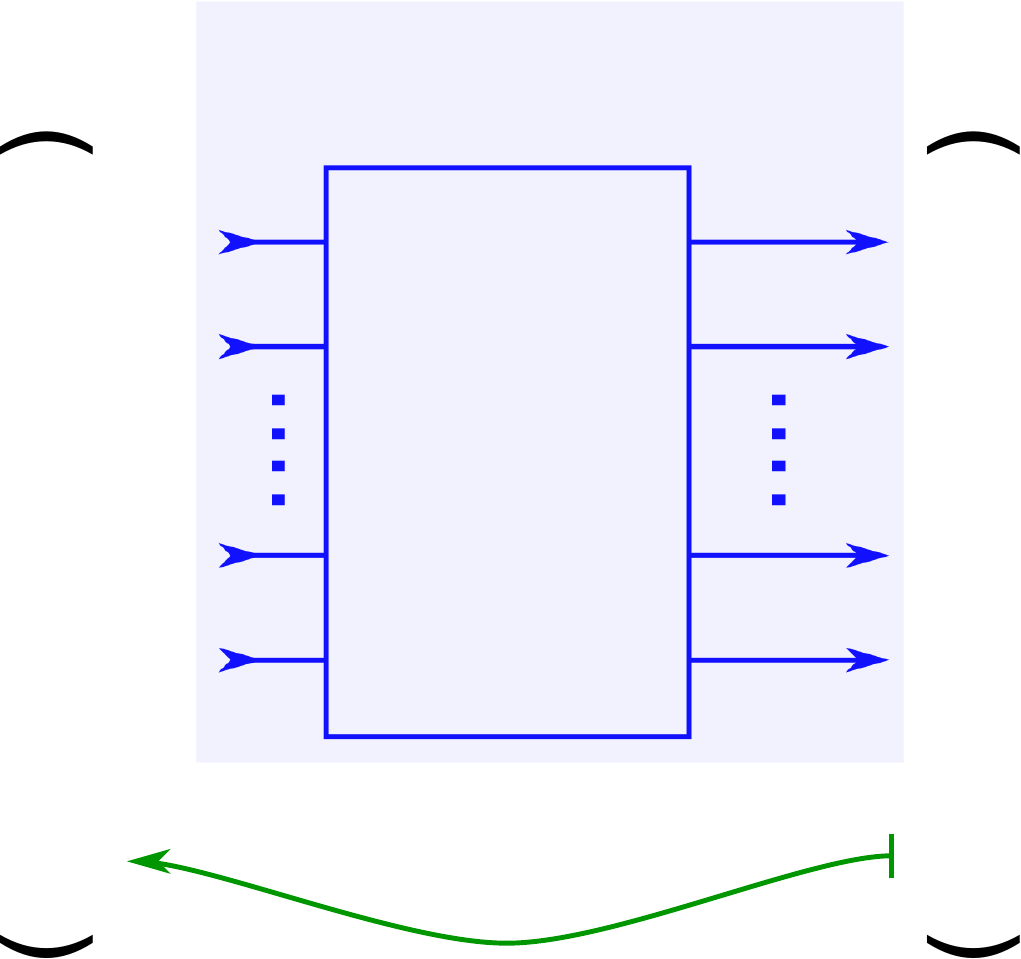}
\caption{The action of the point-pushing map associated to $\gamma = (\alpha_1,\ldots,\alpha_k;\sigma) \in \pi_1(C_k(M))$ on the mapping space $\mathrm{Map}_*(M,X) \times (\Omega_c^{d-1}X)^k$. The loop $\gamma$ is represented in \textcolor{blue}{blue}, the elements of the mapping space in black and the point-pushing map is represented in \textcolor{pushmap}{green}.}
\label{fig:point-pushing-mapping-spaces-1}
\end{figure}

\begin{rmk}
Part of the formula \eqref{eq:action-formula} remains valid without the additional hypothesis on $M$. More precisely, assuming still that $\mathrm{dim}(M) \geq 3$ but removing the second hypothesis (so $M$ is now allowed to be non-simply-connected and to have maximal handle-dimension), the formula for the action of $\gamma = (\alpha_1,\ldots,\alpha_k;\sigma)$ becomes
\begin{equation}
\label{eq:action-formula-2}
(\alpha_1,\ldots,\alpha_k;\sigma) \cdot (f,g_1,\ldots,g_k) = (? , \bar{g}_1 ,\ldots, \bar{g}_k),
\end{equation}
where the entry $?$ is not in general $f$, but rather a based map $M \to X$ that depends in a subtle way on $f$, the loop $\gamma$ and the elements $g_i$. For example, when $\gamma = (\alpha,1,\ldots,1;\mathrm{id})$, the map $? \colon M \to X$ is given by the composition
\[
\mathrm{fold} \circ (f \vee g_1) \circ \overline{\pitchfork}_\alpha \colon M \too M \vee S^{d-1} \too X \vee X \too X,
\]
where $\overline{\pitchfork}_\alpha$ is the map defined in Definition \ref{defn:pf2}. To see this, recall that the equations \eqref{eq:two-formulas} describe the point-pushing action of a loop generator $\alpha$ under the additional assumptions on $M$, and the equations \eqref{eq:two-formulas-2} describe the point-pushing action of $\alpha$ without these assumptions. The right-hand equation of \eqref{eq:two-formulas} agrees with the right-hand equation of \eqref{eq:two-formulas-2}, which is why the tuple $(\bar{g}_1 ,\ldots, \bar{g}_k)$ occurs in \eqref{eq:action-formula-2}, just as in \eqref{eq:action-formula}. However, the left-hand equation of \eqref{eq:two-formulas} is simply $\bar{\pi}^M_\alpha \simeq \mathrm{inc}_M$, whereas the left-hand equation of \eqref{eq:two-formulas-2} is $\bar{\pi}^M_\alpha \simeq \overline{\pitchfork}_\alpha$.
\end{rmk}

\begin{rmk}
\label{r:split-injectivity}
Corollary \ref{coro:point-pushing-mapping-space} is used in \cite[\S 8]{PalmerTillmann2020homologicalstabilityconfigurationsection} to prove a certain split-injectivity result for maps between configuration-mapping spaces. More precisely, there is a natural map of spectral sequences converging to the map on homology induced by the \emph{stabilisation map}
\[
\cmap{k}{c,*}{M}{X} \too \cmap{k+1}{c,*}{M}{X}.
\]
Under the hypotheses on $M$ assumed in Corollary \ref{coro:point-pushing-mapping-space}, this map of spectral sequences is split-injective on $E^2$ pages. For the precise statement, see \cite[Theorem 8.12]{PalmerTillmann2020homologicalstabilityconfigurationsection}.
\end{rmk}

Corollary \ref{coro:point-pushing-mapping-space} may also be used to understand the path-components of configuration-mapping spaces of manifolds of dimension at least $3$. As an example, we have the following.

\begin{coro}
\label{cor:connected_components}
Suppose that $d = \mathrm{dim}(M) \geq 3$, $M$ is orientable and either
\begin{itemizeb}
\item $M$ is simply-connected, or
\item the handle-dimension of $M$ is at most $d-2$.
\end{itemizeb}
Then there is a natural bijection
\begin{equation}
\label{eq:path-components-cmap}
\pi_0(\cmap{k}{c,*}{M}{X}) \;\cong\; \bigsqcup_{f \in \langle M,X \rangle} SP^k(c_f),
\end{equation}
where $\langle M,X \rangle = \pi_0(\mathrm{Map}_*(M,X))$, the notation $SP^k(\phantom{-})$ means $(\phantom{-})^k / \Sigma_k$ and $c_f$ is the pre-image of $c \subseteq [S^{d-1},X]$ under the quotient map
\[
\pi_{d-1}(X) / f_*(\pi_1(M)) \too \pi_{d-1}(X) / \pi_1(X) = [S^{d-1},X].
\]
\end{coro}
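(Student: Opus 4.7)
The strategy is to exploit the fibre bundle structure
\[
\mathrm{Map}_*^c(M \smallsetminus z, X) \too \cmap{k}{c,*}{M}{X} \too C_k(\mathring{M}),
\]
together with the general fact that, for a fibration over a path-connected base, $\pi_0$ of the total space is the quotient of $\pi_0$ of the fibre by the monodromy action of $\pi_1$ of the base. Since $\mathrm{dim}(M) \geq 2$ and $M$ is connected, the base $C_k(\mathring{M})$ is path-connected, so the task reduces to computing $\pi_0$ of the fibre and then quotienting by the point-pushing action of $\pi_1(C_k(M)) \cong \pi_1(M)^k \rtimes \Sigma_k$.

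Under the hypotheses on $M$, Corollary \ref{coro:point-pushing-mapping-space} provides a $\pi_1(C_k(M))$-equivariant identification of the fibre with $\mathrm{Map}_*(M,X) \times (\Omega_c^{d-1}X)^k$. Taking $\pi_0$ gives $\langle M,X \rangle \times C^k$, where $C = \pi_0(\Omega_c^{d-1}X)$ is the preimage of $c$ under the quotient map $\pi_{d-1}(X) \twoheadrightarrow [S^{d-1},X]$. Because $M$ is orientable, every $\mathrm{sgn}(\alpha_i)$ equals $+1$, so the formula \eqref{eq:action-formula} simplifies to
\[
(\alpha_1,\ldots,\alpha_k;\sigma) \cdot (f,g_1,\ldots,g_k) \;=\; \bigl( f,\; f_*(\alpha_1) \cdot g_{\sigma(1)},\; \ldots,\; f_*(\alpha_k) \cdot g_{\sigma(k)} \bigr).
\]
In particular, the first coordinate $f \in \langle M,X\rangle$ is fixed by the action, so $\pi_0$ of the total space decomposes as a disjoint union indexed by $f \in \langle M,X\rangle$, and for each such $f$ we must quotient $C^k$ by the action where $\alpha_i \in \pi_1(M)$ acts on the $i$-th factor through $f_* \colon \pi_1(M) \to \pi_1(X)$, and $\Sigma_k$ permutes the factors.

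Performing this quotient in two stages, I would first quotient $C^k$ by the $\pi_1(M)^k$-action, which acts factorwise via $f_*$. This gives $(C/f_*(\pi_1(M)))^k$, and by definition $C/f_*(\pi_1(M))$ is precisely the preimage of $c$ under $\pi_{d-1}(X)/f_*(\pi_1(M)) \twoheadrightarrow \pi_{d-1}(X)/\pi_1(X) = [S^{d-1},X]$, which is $c_f$. Next, quotienting $(c_f)^k$ by the residual $\Sigma_k$-action yields $SP^k(c_f)$. Taking the disjoint union over $f \in \langle M,X\rangle$ produces the claimed bijection \eqref{eq:path-components-cmap}. Naturality follows from the naturality of all ingredients: the fibre bundle, the identification of Corollary \ref{coro:point-pushing-mapping-space}, and the quotient operation.

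The main obstacle is conceptual rather than computational: one must be precise about the interplay between the two group actions. In particular the $\pi_1(M)^k$-action on $C^k$ is factorwise but \emph{not} diagonal, because the loop generators $\alpha_i$ for different $i$ act on different factors; this is what allows the quotient to be taken factorwise, leaving behind a clean $\Sigma_k$-action. The orientability hypothesis is essential to eliminate the sign contribution and hence to make the quotient $C / f_*(\pi_1(M))$ correct; in the non-orientable case one would instead have to quotient by the index-two subgroup action of $f_*(\pi_1(M))$ extended by a $\bZ/2$ via the orientation character, and $c$ would need the invariance hypothesis of Definition \ref{d:cmap-2}. Beyond this, the argument is essentially bookkeeping once the monodromy formula of Corollary \ref{coro:point-pushing-mapping-space} is in hand.
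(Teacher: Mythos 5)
Your proposal is correct and follows essentially the same route as the paper's proof: both use the long exact sequence of the fibre bundle to reduce to computing orbits of the point-pushing action on $\pi_0$ of the fibre, then invoke Corollary \ref{coro:point-pushing-mapping-space} to see that $f$ is preserved and each $[g_i]$ is acted on individually by $f_*(\pi_1(M))$ and permuted by $\Sigma_k$. Your two-stage quotient (first by the normal subgroup $\pi_1(M)^k$ acting factorwise, then by $\Sigma_k$) is a slightly more explicit rendering of the paper's terser "the formula follows."
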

\begin{proof}
By the long exact sequence associated to the bundle \eqref{eq:configuration-mapping-space-charge}, the left-hand side of \eqref{eq:path-components-cmap} is naturally in bijection with the set of orbits of
\[
\pi_0(\mathrm{Map}_*^c(M \smallsetminus z,X)) \cong \langle M,X \rangle \times \widetilde{c}^k
\]
under the monodromy (i.e., point-pushing) action of $\pi_1(C_k(M))$, where $\widetilde{c}$ denotes the pre-image of $c \subseteq [S^{d-1},X]$ under the quotient map $\pi_{d-1}(X) \to \pi_{d-1}(X) / \pi_1(X) = [S^{d-1},X]$. Corollary \ref{coro:point-pushing-mapping-space} implies that the elements of $\pi_1(C_k(M))$ act on a tuple $([f],[g_1],\ldots,[g_k])$ by (i) permuting the $[g_i]$'s and (ii) acting on each $[g_i]$ (individually) by $f_*(\pi_1(M)) \leq \pi_1(X)$. The formula \eqref{eq:path-components-cmap} follows.
\end{proof}

\phantomsection
\addcontentsline{toc}{section}{References}
\renewcommand{\bibfont}{\normalfont\small}
\setlength{\bibitemsep}{0pt}
\printbibliography

\vspace{0pt plus 1filll}

\noindent {\itshape Institutul de Matematică Simion Stoilow al Academiei Române, 21 Calea Griviței, 010702 București, Romania}, {\tt mpanghel@imar.ro}

\noindent {\itshape Isaac Newton Institute for Mathematical Sciences, University of Cambridge, Clarkson Road, Cambridge CB3 0EH, UK}, {\tt ut213@cam.ac.uk}

\end{document}